\newtheorem{assumption}{Assumption}[section]
\newtheorem{remark}{Remark}[section]
\newtheorem{example}{Example}[section]
\newtheorem{defin}{Definition}[section]
\newtheorem{thm}{Theorem}[section]
\newtheorem{lem}{Lemma}[section]
\def\smallunderbrace#1{\mathop{\vtop{\m@th\ialign{##\crcr  
$\hfil\displaystyle{#1}\hfil$\crcr 
\noalign{\kern3\p@\nointerlineskip}%
\tiny\upbracefill\crcr\noalign{\kern3\p@}}}}\limits} 
\title{Optimal control for a nonlinear stochastic PDE model\\ of cancer growth} 
\author{Sakine Esmaili$^{a}$ 
\hspace{.5cm} M. R. Eslahchi$^{a,}$\footnote{Corresponding author (M. R. Eslahchi). 
$\newline$ {\em E-mail addresses:} sakine.esmaili@modares.ac.ir (Sakine Esmaili),
eslahchi@modares.ac.ir (M. R. Eslahchi), 
delfim@ua.pt (Delfim F. M. Torres).}
\hspace{.5cm} Delfim F. M. Torres$^{b}$
\vspace{.5cm}$$\\
\small{\em $$\em  $^{a}$Department of Applied Mathematics, 
Faculty of Mathematical Sciences,}\vspace{-1mm}\\
\small{\em Tarbiat Modares University, P.O. Box 14115-134,
Tehran, Iran}\\
\small{\em $^{b}$Center for Research and Development 
in Mathematics and Applications (CIDMA),}\vspace{-1mm}\\
\small{\em Department of Mathematics, University of Aveiro, 3810-193 Aveiro, Portugal}}
\date{\small (This is a preprint of a paper whose final and definite 
form is published in 'Optimization' 
at \url{https://doi.org/10.1080/02331934.2023.2232141})}
\begin{document}

\maketitle 

\begin{abstract}
We study an optimal control problem for a stochastic model of tumour growth with drug application. This model consists of three stochastic hyperbolic equations describing the evolution of tumour cells. It also includes two stochastic parabolic equations describing the diffusions of nutrient   and drug concentrations. Since all systems are subject to many uncertainties, we have added stochastic terms to the deterministic model to consider the random perturbations. Then, we have added control variables to the model according to the medical concepts to control the concentrations of drug and nutrient. In the optimal control problem, we have defined the stochastic and deterministic cost functions and we have proved  the problems have unique optimal controls. For deriving the necessary conditions for optimal control variables, the stochastic adjoint equations are derived. We have proved the stochastic model of tumour growth and the stochastic adjoint equations have unique solutions. For proving the theoretical results, we have used a change of variable which changes the stochastic model and adjoint equations (a.s.) to deterministic equations. Then we have employed the techniques used for deterministic ones to prove the existence and uniqueness of optimal control.   
\end{abstract}

\noindent \textbf{Keywords:}  Stochastic optimal control; Stochastic parabolic-hyperbolic equation;
Ekeland variational principle; Multicellular tumour spheroid model; Free boundary problem.\\

\noindent \textbf{MSC 2020} {49J55; 49J20; 49J15; 49K45; 49K20; 49K15.}


\section{Introduction}

Cancer is one of the most leading  causes  of death  throughout the world. 
For this, researchers in different fields,  have studied tumours from different points of view.  Mathematicians, who are active in the cancer research field,  study mathematical models of tumour growth with different approaches (for instance see \cite{2.,7.,8.1,8.2,4.,8.}). In most of the models, it is assumed that the tumours grow radially symmetric (for instance, see \cite{2.,8.}) because in vitro observations suggest that in early stages  the solid tumour growth  is approximately radially symmetric \cite{4.}.  Available evidences suggest that low concentrations of glucose
and oxygen in the inner regions of spheroids may contribute to the formation of quiescent, hypoxic, anoxic and necrotic cell sub-populations \cite{11.} (see Figure~\ref{pqdcells}). 
\begin{figure}[!t]
\begin{center}
\hspace{-0cm}\includegraphics[width=14cm]{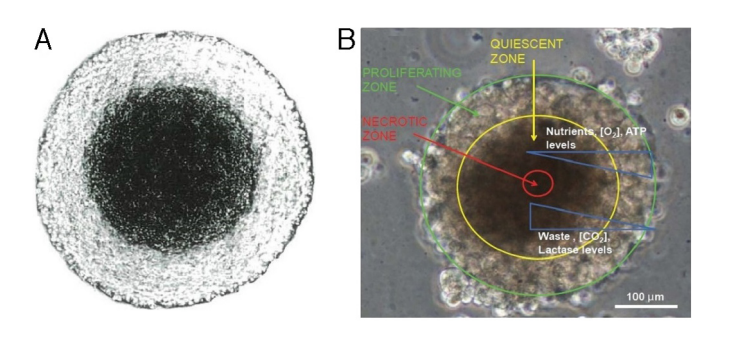}
\vspace{-0.1cm}\caption{\small{Unstained equatorial cryosection of 10 \textup{$\mu$}m thickness through a WiDr human colon adenocarcinoma spheroid as visualized by a phase contrast
microscope (bar, 250 \textup{$\mu$}m) \cite{im1} (A). The inner necrotic core is surrounded by a layer of quiescent cells, which, in turn, is surrounded by a layer of viable
cells \cite{im2}; Pathophysiological gradients in a 3D tumour spheroid \cite{im3} (B). }}\label{pqdcells}
\vspace{-1cm}
\end{center}
\end{figure}
Therefore, in many  tumour growth models  alive cells are divided into  proliferative cells and quiescent cells (for instance, see \cite{2.,8.}).
It is worth mentioning that all systems are affected by  many uncertainties  arisen from environment, experimental variations and so on. Therefore many researchers have studied the stochastic models of tumour growth  to consider  random perturbations and uncertainties     (for instance, see \cite{stoch1,stoch2,stoch3}). On the other hand, due to the importance of the treatment of cancers, many researchers have investigated the models  in which drug therapy of tumours is studied. For example in \cite{2.}, the author studied a model of tumour growth, which is under treatment. In this model, it is assumed that the tumour grows radially symmetric and contains three types of cells including proliferative cells, quiescent cells and dead cells. The evolution of cells are modelled by three nonlinear first-order hyperbolic equations. The effects of nutrient and drug on the tumour growth are also modelled and the diffusions of nutrient and drug concentrations are described by two coupled nonlinear parabolic equations.

 The treatment of tumours is of enormous importance and the tumours can be treated   controlling some parameters such as drug and nutrient concentrations but changing these parameters  may have negative effects on healthy tissues. Therefore, studying the optimal control problem for  the mathematical models of tumour growth   is necessary and is the subject of
many studies. For instance, the authors of \cite{30.} studied an optimal control problem  for a model  of tissue invasion
by solid tumours presented in \cite{31.}.   Calzada et al.  \cite{17.} also  studied the optimal control problem for a free boundary model of tumour growth  that is a slight simplification of the model proposed   by Greenspan \cite{27.,28.}  and   Byrne and Chaplain \cite{29.}. The authors of \cite{29.5} studied the dynamics of breast cancer disease  in the presence of two control strategies,
anti-cancer drugs and ketogenic-diet, against the tumour cells. They analysed the necessary
and sufficient conditions, optimality and transversality conditions using Pontryagin
Maximum Principle. In another investigation  \cite{29.4},  an optimal control approach is presented to analyse
some  treatments for bone metastasis. They considered an ODE model and focus on denosumab
treatment, an anti-resorptive therapy, and radiotherapy treatment and provide proofs of existence and uniqueness of solutions to the corresponding optimal control problems.  

Real environments are stochastic and  in biological systems,
 birth rates, competition coefficients, carrying capacities and other parameters characterizing natural biological systems exhibit random fluctuations \cite{Stability and Complexity in Model Ecosystems}. For instance, in terrestrial ecosystems  the environmental noise tends to be white \cite{the color of}. Even weak noise can  result in unexpected qualitative shifts
in dynamics of nonlinear systems \cite{Bashkirtseva}. Therefore many  researchers have studied the stochastic models of cancers to take into account  uncertainties and deal with more reliable models \cite{stoch1,stoch2,stoch3,29.3,29.1,29.2,Bashkirtseva,stochtumour1,stochtumour2,stochtumour3,stochtumour4}. In paper \cite{29.3},  the total
number of tumours is minimized subject to a stochastic model
in the form of a nonlinear system of four stochastic differential equations (SDEs). The model  describes
tumour-immune dynamics after BCG instillations. The existence and the
stability results are also studied. A model of cancer based upon stochastic controlled versions of the classical Lotka--Volterra equations are studied in \cite{29.1}. The authors considered from a
control point of view the utility of employing  ultrahigh dose-rate flash irradiation. 
In another work \cite{29.2},  a stochastic differential equation model of the evolution of bone metastasis is studied. In this work, the existence and uniqueness of a nonnegative  solution for the stochastic differential equation model  are proved.

In paper
 \cite{8.2}, we have studied two optimal
controls for a free boundary problem modelling tumour
growth with drug application which is deterministic and presented in \cite{2.}. We  have also solved the deterministic optimal control problems numerically and proved the convergence and stability of the used methods \cite{8.3,8.4}. In this paper, we have considered  the  nonlinear deterministic parabolic-hyperbolic free boundary  problem modelling the growth of tumour given in \cite{2.}.  In order to have a more reliable model, we have taken into account the random perturbations in the model by adding some random terms. Most of the researchers study the stochastic ordinary differential equations to study cancer growth in which the effects of uncertainties are considered. But in this paper, we have studied  stochastic partial  differential equations including both parabolic and hyperbolic equations to study cancer growth with more details. Then,  we have controlled the concentrations of drug and nutrient using some control variables to control the growth of  tumour. We have also studied an optimal control problem for the stochastic model of tumour growth  to be able to make correct decisions for destroying the tumour. 
 \noindent  For the reader's convenience, we  summarize our main contributions as follows:
\begin{itemize}
\item Adding stochastic terms to the deterministic partial differential equation model of tumour growth to take into account the random perturbations
and showing that the obtained stochastic model of tumour growth has a unique solution.
\item Constructing a stochastic optimal control problem named "SOCP" by defining a stochastic cost function according to the medical concepts.
\item Obtaining stochastic adjoint equations  and showing that the stochastic adjoint equations have unique solutions.
\item Presenting the explicit  forms of the stochastic control variables in terms of the stochastic adjoint states.
\item Proving that the obtained stochastic optimal control optimizes the deterministic cost function obtained by taking the expectation of the stochastic cost function.
\item Showing  the deterministic cost function defined by expectation also has a unique deterministic optimal control.
\end{itemize}


\section{Preliminaries}

Before presenting the main results of this paper, we first present some  theorems  and lemmas.

\begin{lem}
\label{lem}
Let $\{\mathfrak{M}_n\}_{n=0}^\infty$ be a sequence of nonnegative continuous functions on $[0,T]$ such that
\begin{equation}\label{induc11}
\mathfrak{M}_{n+1}(t)\leq \mathfrak{K}\Big(\int_t^T\int_0^s \mathfrak{M}_n(\xi)d\xi ds\Big),~~ \end{equation}
where $\mathfrak{K}$ is a positive constant. Then there exists $\tilde{T}>0$ such that
\begin{equation*}
\lim_{n\rightarrow\infty}\mathfrak{M}_n(t)=0,~~t\in[0,\tilde{T}].
 \end{equation*}
\end{lem}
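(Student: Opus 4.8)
The plan is to exploit the monotonicity of the integral operator on the right of \eqref{induc11} and to collapse the whole sequence onto a single geometric recursion on a short horizon. First I would record that the map
$$
f\longmapsto \mathfrak{K}\int_t^{T}\int_0^s f(\xi)\,d\xi\,ds
$$
is order preserving on nonnegative functions, since its kernel is nonnegative: if $0\le f\le g$ pointwise then the images are ordered the same way. Because every $\mathfrak{M}_n$ is continuous on the compact interval, I may set $M_0:=\max_{[0,T]}\mathfrak{M}_0<\infty$, and then an induction through \eqref{induc11} together with this monotonicity shows that each $\mathfrak{M}_n$ is dominated by the $n$-th image of the constant $M_0$.

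The key step is a clean constant-to-constant estimate. Fix a horizon $\tilde{T}\le T$ and work on $t\in[0,\tilde T]$, applying \eqref{induc11} with upper limit $\tilde T$ (the bound being local in the time variable). For a constant majorant $\mathfrak{M}_n\le a$ one computes $\mathfrak{K}\int_t^{\tilde T}\int_0^s a\,d\xi\,ds=\mathfrak{K}a\,(\tilde T^2-t^2)/2\le q\,a$ with $q:=\mathfrak{K}\tilde T^2/2$, so the operator contracts constants by the factor $q$. Feeding this into the induction (base case $\mathfrak{M}_0\le M_0$, inductive step $\mathfrak{M}_{n+1}\le \mathfrak{K}M_0 q^{n}\int_t^{\tilde T}s\,ds\le M_0 q^{n+1}$) yields $\mathfrak{M}_n(t)\le M_0\,q^{\,n}$ for all $t\in[0,\tilde T]$ and all $n$. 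Finally I would fix $\tilde T$ so small that $q=\mathfrak{K}\tilde T^2/2<1$, that is $\tilde T<\sqrt{2/\mathfrak{K}}$; then $M_0 q^{n}\to 0$ and hence $\mathfrak{M}_n(t)\to 0$ uniformly on $[0,\tilde T]$, which is the assertion.

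The main obstacle is that the double integral is \emph{not} a genuine Volterra convolution: the inner integral $\int_0^s$ runs over a macroscopic part of the interval rather than a shrinking one, so one does not obtain the automatic $1/(2n)!$ factorial gain that a nested forward integral $\int_0^t\int_0^s$ would provide (such a gain would force decay for every $T$). From the constant majorant one only gets a geometric factor $q=\mathfrak{K}\tilde T^2/2$, and this lies below $1$ solely on a short interval. This is exactly why the conclusion is stated on a sub-interval $[0,\tilde T]$ and why the smallness threshold $\tilde T<\sqrt{2/\mathfrak{K}}$ is unavoidable; I would therefore secure the bound $q<1$ before running the induction, after which the convergence to zero is immediate.
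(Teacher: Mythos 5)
Your argument breaks down at exactly the step you call the key step: you ``apply \eqref{induc11} with upper limit $\tilde T$'', i.e.\ you use the inequality $\mathfrak{M}_{n+1}(t)\le \mathfrak{K}\int_t^{\tilde T}\int_0^s \mathfrak{M}_n(\xi)\,d\xi\,ds$. The hypothesis only gives the bound with outer integral $\int_t^{T}$, and since the integrand is nonnegative we have $\int_t^{\tilde T}\int_0^s \mathfrak{M}_n\le \int_t^{T}\int_0^s \mathfrak{M}_n$; so the quantity you substitute is \emph{smaller} than the bound the hypothesis provides, and the inequality you need does not follow. Contrary to your parenthetical justification, the bound is not ``local in the time variable'': for $t\in[0,\tilde T]$ the right-hand side of \eqref{induc11} still integrates $s$ up to $T$ and, through the inner integral $\int_0^s$ with $s$ as large as $T$, involves the values of $\mathfrak{M}_n$ on all of $[0,T]$. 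Your induction hypothesis ($\mathfrak{M}_n\le M_0q^n$ on $[0,\tilde T]$ only) therefore cannot close the inductive step: you have no control of $\mathfrak{M}_n$ on $[\tilde T,T]$, and that portion of the interval feeds into the bound for $\mathfrak{M}_{n+1}$ even at points $t$ near $0$. Ironically, your final paragraph correctly identifies this non-Volterra structure as the main obstacle, but the localization you use to get around it is precisely the step that is invalid.

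The gap is not cosmetic, because the localized claim you are implicitly proving is false. Take $\lambda=4\mathfrak{K}T^2/\pi^2$ and $\mathfrak{M}_n(t)=\lambda^n\cos\big(\pi t/(2T)\big)$: these are nonnegative, continuous, and satisfy \eqref{induc11} with equality (a direct computation gives $\mathfrak{K}\int_t^T\int_0^s\cos(\pi\xi/(2T))\,d\xi\,ds=\lambda\cos(\pi t/(2T))$); yet if $T>\pi/(2\sqrt{\mathfrak{K}})$ then $\lambda>1$ and $\mathfrak{M}_n(t)\to\infty$ for every $t\in[0,T)$, so no $\tilde T>0$ as in the conclusion exists. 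Hence no argument that keeps $T$ large and merely shrinks the window of observation can work. The paper's own proof is the same sup-norm idea as yours but run over the \emph{whole} interval: with $M_k=\max_{[0,T]}\mathfrak{M}_k$ it derives, legitimately, $M_{n+1}\le \frac{\mathfrak{K}T^2}{2}M_n$, which yields the conclusion (uniformly on $[0,T]$) exactly when $\frac{\mathfrak{K}T^2}{2}<1$; its closing restriction to $\tilde T<\min\{\sqrt{2/\mathfrak{K}},T\}$ shares the weakness above for $T\ge\sqrt{2/\mathfrak{K}}$, but every displayed inequality there is valid. In your write-up the contraction factor $q=\mathfrak{K}\tilde T^2/2$ is produced by an inequality that is false as stated, so your proof does not go through even in the regime $T<\sqrt{2/\mathfrak{K}}$ where the paper's computation settles the lemma; to repair it, drop the localization and take the maximum over all of $[0,T]$ as the paper does.
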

\noindent{\it Proof} From \eqref{induc11}, we conclude that 
\begin{equation*}
{M}_{n+1}\leq \mathfrak{K}\Big(\int_0^T\int_0^s M_n d\xi ds\Big),~~ \end{equation*}
where
\[M_k=\max_{0<t<T}| \mathfrak{M}_k(t)|, k\in \mathbb{N}_0.\]
Therefore
\begin{equation*}
{M}_{n+1}\leq \dfrac{\mathfrak{K}T^2}{2}{M}_{n},~~ \end{equation*}
So, the theorem  can be proved easily if we consider $\tilde{T}<\min\left\{\sqrt{\dfrac{2}{\mathfrak{K}}},T\right\}$.
\qed

\begin{defin}\label{def1}
 \textup{(Definition 1.2.2 in \cite{21.}) Let $0<\alpha<1$ and $\Omega\subset \mathbb{R}^n$ be bounded. Then, $f\in C^{\alpha,\frac{\alpha}{2}}(\overline{\Omega}\times[0,T])$ (where  $\overline{\Omega}$ is the closure of $\Omega$  with respect to Euclidean norm) if there exists a positive constant $l$ such that
\[|f(x^1,t_1)-f(x^2,t_2)|\leq l\Big(|x^1-x^2|^2+|t_1-t_2|\Big)^{\frac{\alpha}{2}},~\forall x^1,x^2\in\overline{\Omega},~\forall t_1,t_2\in[0,T].\] 
Furthermore, for any nonnegative integer $k$
\[ C^{2k+\alpha,k+\frac{\alpha}{2}}(\overline{\Omega}\times[0,T]):=\{ f\in C^{\alpha,\frac{\alpha}{2}}(\overline{\Omega}\times[0,T]): \partial_x^{\beta} \partial_t^{i} f\in C^{\alpha,\frac{\alpha}{2}}(\overline{\Omega}\times[0,T]),~~   |\beta |+2i\leq 2k\},\]}
where $\beta = \left( {{\beta_1},{\beta_2},\ldots,{\beta_n}} \right)$, $\left| \beta \right| = \sum\limits_{j = 1}^n {{\beta_j}} $
and
\begin{equation*}
\partial _x^\beta f = \frac{{{\partial ^{{\beta_1} + {\beta_2} + \cdots + {\beta_n}}}f}}{{\partial x_1^{{\beta_1}}\partial x_2^{{\beta_2}} \cdots \partial x_n^{{\beta_n}}}}.
\end{equation*}
\end{defin}

\begin{thm}\label{v principle} \textup{(Theorem 3.2 in \cite{24.})}
Let $X$ be a complete metric space and let $f : X \rightarrow ]-\infty, +\infty]$
be  lower semicontinuous and bounded from below and $\not\equiv+\infty$. Let $\epsilon > 0$ and $x_\epsilon\in X$ be such that
$f(x_\epsilon)\leq\inf\Big\{f(x): x\in X\Big\}+\epsilon.$
Then, there exists $y_\epsilon\in X$ such that
\[f(y_\epsilon)\leq f(x_\epsilon),~~~d(x_\epsilon,y_\epsilon)\leq\epsilon^{\frac{1}{2}},\]
\[f(y_\epsilon) < f(x) + \epsilon^{\frac{1}{2}}d(y_\epsilon, x),~~ \forall x \neq y_\epsilon.\]
\end{thm}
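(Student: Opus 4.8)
The plan is to obtain $y_\epsilon$ as a minimal element of a partial order built directly from $f$. Set the constant $c=\epsilon^{\frac12}$ and define, on $X$, the relation
\[
u\preceq v \quad\Longleftrightarrow\quad f(u)+c\,d(u,v)\le f(v).
\]
First I would check that $\preceq$ is a genuine partial order: reflexivity is immediate, transitivity follows from the triangle inequality for $d$, and antisymmetry follows by adding the two defining inequalities for $u\preceq v$ and $v\preceq u$, which forces $d(u,v)=0$ and hence $u=v$. The reason for this choice is that the third (strongest) conclusion is exactly the assertion that $y_\epsilon$ is $\preceq$-minimal: the statement that no $x\neq y_\epsilon$ satisfies $x\preceq y_\epsilon$ reads precisely $f(y_\epsilon)<f(x)+c\,d(y_\epsilon,x)$ for every $x\neq y_\epsilon$.

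Next I would produce such a minimal element by an explicit descent starting from $x_\epsilon$. Put $x_0=x_\epsilon$ and, having chosen $x_n$, set $S_n=\{x\in X : x\preceq x_n\}$, which is nonempty since $x_n\in S_n$, and pick $x_{n+1}\in S_n$ with
\[
f(x_{n+1})\le \inf_{x\in S_n} f(x)+2^{-n}.
\]
From $x_{n+1}\preceq x_n$ the values $f(x_n)$ are non-increasing and bounded below, hence convergent; and for $m>n$ transitivity gives $x_m\preceq x_n$, so $c\,d(x_n,x_m)\le f(x_n)-f(x_m)$, which shows $\{x_n\}$ is Cauchy. Completeness of $X$ then yields a limit $y_\epsilon$. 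Passing to the limit $m\to\infty$ in $f(x_m)+c\,d(x_m,x_n)\le f(x_n)$, using continuity of $d$ together with lower semicontinuity of $f$, gives $y_\epsilon\preceq x_n$ for every $n$; in particular $y_\epsilon\preceq x_0=x_\epsilon$.

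The relation $y_\epsilon\preceq x_\epsilon$ already yields the first two conclusions: it gives $f(y_\epsilon)\le f(x_\epsilon)$ directly, and $c\,d(x_\epsilon,y_\epsilon)\le f(x_\epsilon)-f(y_\epsilon)\le f(x_\epsilon)-\inf_X f\le\epsilon$, whence $d(x_\epsilon,y_\epsilon)\le\epsilon/c=\epsilon^{\frac12}$. The crux, and the step I expect to be the main obstacle, is the minimality of $y_\epsilon$. Suppose $z\preceq y_\epsilon$; since $y_\epsilon\preceq x_n$ for all $n$, transitivity gives $z\preceq x_n$, i.e. $z\in S_n$, so $f(x_{n+1})\le f(z)+2^{-n}$. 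Combining this with $z\preceq x_{n+1}$, which reads $c\,d(z,x_{n+1})\le f(x_{n+1})-f(z)$, I obtain $c\,d(z,x_{n+1})\le 2^{-n}\to 0$, so $x_{n+1}\to z$; but $x_{n+1}\to y_\epsilon$, forcing $z=y_\epsilon$. Hence nothing lies strictly below $y_\epsilon$, which is the third conclusion. The delicate points to control throughout are the bookkeeping of the constant $c=\epsilon^{\frac12}$ (so the summable errors $2^{-n}$ collapse the descent to a single minimal point) and the careful use of lower semicontinuity when passing to the limit.
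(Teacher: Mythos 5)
Your proof is correct. Note, however, that the paper does not prove this statement at all: it is Ekeland's variational principle, quoted verbatim as Theorem 3.2 of the cited monograph of Barbu and used later (in the proof of Theorem 5.4) as a black box, so there is no paper proof to compare against. What you have written is a correct, self-contained rendition of the standard argument: the Bishop--Phelps-type ordering $u\preceq v \iff f(u)+\epsilon^{1/2}d(u,v)\le f(v)$, the iterative near-minimization over the sections $S_n=\{x: x\preceq x_n\}$ with summable tolerances $2^{-n}$, the Cauchy estimate $\epsilon^{1/2}d(x_n,x_m)\le f(x_n)-f(x_m)$, the passage to the limit via lower semicontinuity of $f$ and continuity of $d$, and the identification of the limit as a $\preceq$-minimal point, which is exactly the third conclusion. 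All the estimates check out, including the bookkeeping giving $d(x_\epsilon,y_\epsilon)\le \epsilon/\epsilon^{1/2}=\epsilon^{1/2}$. One cosmetic remark: antisymmetry of $\preceq$ can fail on the set $\{f=+\infty\}$ (two distinct points with infinite value dominate each other vacuously), so $\preceq$ is in general only a preorder on $X$; but this is harmless, since your argument never actually uses antisymmetry --- only reflexivity, transitivity, and the defining inequality --- and all points produced by the descent lie in the effective domain of $f$ because $f(x_{n+1})\le f(x_n)\le f(x_\epsilon)<+\infty$.
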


\begin{defin} 
\textup{(Section 1 in \cite{2.})Let $R(t)~(0\leq t\leq T)$ be a positive continuous function. Then 
 we define 
\[W_{p}^{2,1 }(Q_T^R ):=\left\{u\in L^{p} (Q_T^R ) : \partial_x^\alpha \partial_t^k u\in L^{p} (Q_T^R ),~~   |\alpha |+2k\leq 2\right\},\]
with the norm
$||u||_{{W_{p}^{2,1}} (Q_T^R ) }:=\displaystyle{\sum _{|\alpha|+2k\leq 2}}||\partial_x^\alpha \partial
_t^k u||_{L^{p} (Q_T^R )}$, where 
\[Q_T^R:=\Big\{(x,t)\in \mathbb{R}^{3}\times \mathbb{R}: |x|< R(t)  ,0<t<T\Big\}.\]}
\end{defin}

\begin{defin}
\textup{(Section 1 in \cite{2.}) Let $\Omega\subseteq \mathbb{R}^3$ be an open set and $p>\dfrac{5}{2}$. Then we define $D_{p}(\Omega)$, 
the trace space of $W_{p}^{2,1}(\Omega\times]0,T[)$ at $t=0$, as follows:
\begin{equation*}
 D_{p}(\Omega):=\Big\{\varphi :\exists u\in W_{p}^{2,1}(\Omega\times]0,T[)~s.t.~ u(\cdot, 0) = \varphi \Big\},
\end{equation*}
which is equipped with the norm
\[||\varphi ||_{D_p (\Omega)}  := \inf\Big\{T^{-\frac{1}{p}} ||u||_{W_{p}^{2,1}(\Omega\times]0,T[)} : u\in W_{p}^{2,1} (\Omega\times]0,T[),~  u(.,0)=\varphi\Big\}.\]}
\end{defin}

\begin{defin}\label{def33333}
\textup{ Let $p>1$, $\Omega\subset \mathbb{R}^n$ be a bounded  set and $f_1,\ldots,f_k\in L^p(\Omega)$. Then we define 
\begin{equation*}
\|(f_1,\ldots,f_k)\|_{L^p(\Omega)}=\|f_1\|_{L^p(\Omega)}+\cdots+\|f_k\|_{L^p(\Omega)}.
\end{equation*}}
\end{defin}
\noindent We  have merged  the results of  Lemma 3.1 in \cite{2.} and Lemma 3.3 in \cite{22.} in the form of the following lemma.
\begin{lem}\label{Lemma 1} Let $f\left(\rho ,\tau \right),~\psi(\rho,\tau)$ and $\varphi (\tau )$ be bounded continuous functions on  $  [0, 1]\times [0, T]$ and $\ [0, T]\ (T>0),$ respectively.
Let $\overline{c}$ be a constant and ${c}_{ 0}$ be a function on $[0,1]$ such that $c_0(\left|x\right|)\in D_p(\mathfrak{B})$  for some $p>5$, where $\mathfrak{B}$ is a unit ball in $\mathbb{R}^3$. Then the following problem
\[\dfrac{\partial c }{\partial\tau }=\dfrac{1}{{\rho }^2}\dfrac{\partial }{\partial\rho }\left({\rho }^2\dfrac{\partial c }{\partial\rho }\right)+\varphi \left(\tau \right)\rho \dfrac{\partial c }{\partial\rho }+\psi\left(\rho ,\tau \right)c+f(\rho,\tau) ,~~ 0<\rho <1,~ 0< \tau \leq T,\] 
\[ \dfrac{\partial c}{\partial\rho}(0,\tau)=0,~~c \left(1,\tau \right)=\overline{c},~~ 0\leq\tau\leq T,~c \left(\rho ,0\right)=c_0(\rho),~~ 0\leq\rho \leq 1,\] 
has a unique solution such that $c(|x|,\tau )\in W^{2,1}_p(Q^1_T)$, where $Q^1_T:=\Big\{(x,\tau)\in \mathbb{R}^{3}\times \mathbb{R}: |x|< 1,~0<\tau\leq T\Big\}.$ Also, there exists a positive constant $\mu$ depending only on $p, ~T$, ${||\psi (|x|,\tau )||}_{L^{ \infty}(Q^1_T)}$ and ${||\varphi(|x|,\tau ) ||}_{L^{\infty }(Q^1_T)}$ such that
\begin{equation*}
 {||c (|x|,\tau )||}_{W^{2,1}_p(Q^1_T)}\le  \mu\left(\left|\overline{c }\right|+{\left|\left|{c }_0\left(|x|\right)\right|\right|}_{D_p\left(\mathfrak{B}\right)}+||f(|x|,\tau ) ||_{L^p(Q^1_T)}\right),
\end{equation*}
where $\mu$ is bounded for $T$ in any bounded set. Moreover, there exists a positive constant $d$   such that
\begin{equation*}
{||\partial_x^\beta c (|x|,\tau ) ||}_{L^{\infty }(Q^1_T)}\le s_1T^{\frac{1}{2}-\frac{5}{2p}} {||c (|x|,\tau )||}_{W^{2,1}_p(Q^1_T)}+s_2\delta^{-1-\frac{5}{p}}||c (|x|,\tau )||_{L^p(Q^1_T)},
\end{equation*} 
where $|\beta|=1$ and $\delta=\min\{d,\sqrt{T}\}$ and  $s_1$, $s_2$ are positive constants depended on $p$. Also 
\begin{equation*}
{||c (|x|,\tau )||}_{L^{\infty }(Q^1_T)}\le e^{\mu_0T}\Big(\max\{\left|\overline{c }\right|+{\left|\left|{c }_0(|x| )\right|\right|}_{L^{\infty }(Q^1_T)}\}+T||f(|x|,\tau ) ||_{L^\infty(Q^1_T)}\Big),
\end{equation*} 
where $\mu_0=0$ if $\psi\left(\rho ,\tau \right)\leq 0$ and  $\mu_0=\displaystyle \max_{\overline{Q^1_{T}}}\psi$ otherwise.
\end{lem}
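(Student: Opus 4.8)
The plan is to recognize that the spatial operator is precisely the radially symmetric part of the three-dimensional Laplacian, which turns the stated one-dimensional problem in $\rho$ into a genuine parabolic boundary value problem on the unit ball $\mathfrak{B}\subset\mathbb{R}^3$, after which the standard $L^p$ theory of linear parabolic equations applies. Concretely, writing $x\in\mathbb{R}^3$ with $\rho=|x|$ and viewing $c$ as a radial function $c(|x|,\tau)$, one has $\Delta c=\frac{1}{\rho^2}\partial_\rho(\rho^2\partial_\rho c)$ and $\rho\,\partial_\rho c=x\cdot\nabla c$, so the equation becomes $\partial_\tau c=\Delta c+\varphi(\tau)\,x\cdot\nabla c+\psi c+f$ on $\mathfrak{B}\times(0,T]$, with the Dirichlet condition $c=\overline{c}$ on $\partial\mathfrak{B}$ and initial datum $c_0$. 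The Neumann condition $\partial_\rho c(0,\tau)=0$ is automatically encoded in the smoothness of a radial $W^{2,1}_p$ function at the origin and imposes no extra constraint. I would first homogenize the boundary data by subtracting the constant $\overline{c}$, which moves $\overline{c}$ into the source (through $\psi\,\overline{c}$) and into the initial datum and leaves a zero-Dirichlet problem.

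With the problem recast in this form, existence and uniqueness of $c\in W^{2,1}_p(Q^1_T)$, together with the a priori bound involving $\mu$, follow from the classical $L^p$ solvability theorem for uniformly parabolic equations (the Ladyzhenskaya--Solonnikov--Uraltseva theory, as invoked in Lemma 3.1 of \cite{2.}): the coefficients $\varphi(\tau)$ and $\psi(\rho,\tau)$ are bounded and continuous, so the lower-order terms are bounded, and the datum $c_0(|x|)\in D_p(\mathfrak{B})$ lies in the correct trace space. The dependence of $\mu$ only on $p$, $T$, $\|\psi\|_{L^\infty}$ and $\|\varphi\|_{L^\infty}$, and its boundedness for $T$ in bounded sets, is exactly the quantitative content of that theorem; the three contributions $|\overline{c}|$, $\|c_0\|_{D_p(\mathfrak{B})}$ and $\|f\|_{L^p}$ on the right-hand side record the boundary, initial and interior data respectively.

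The gradient bound is an embedding statement. Since $p>5$ and the spatial dimension is $3$, one has $2-\frac{5}{p}>1$, so the parabolic Sobolev embedding gives $W^{2,1}_p(Q^1_T)\hookrightarrow C^{1+\alpha,\frac{1+\alpha}{2}}(\overline{Q^1_T})$ with $\alpha=1-\frac{5}{p}$, which in particular controls $\partial_x^\beta c$ with $|\beta|=1$ in $L^\infty$. To obtain the displayed inequality with the explicit powers $T^{\frac12-\frac{5}{2p}}$ and $\delta^{-1-\frac5p}$, where $\delta=\min\{d,\sqrt T\}$, I would borrow the sharpened form of this embedding from Lemma 3.3 of \cite{22.}, in which the dependence of the embedding constant on the length of the time interval is made explicit by a parabolic scaling argument: rescaling time and space by a factor that brings the cylinder to a fixed reference size produces exactly these powers of $T$ and of the scale $\delta$.

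Finally, the $L^\infty$ bound on $c$ itself is a maximum-principle estimate. Setting $w=c\,e^{-\mu_0\tau}$ removes the destabilizing sign of the zeroth-order term: if $\psi\le 0$ one takes $\mu_0=0$ and the weak maximum principle for the parabolic operator gives $\|c\|_{L^\infty}\le\max\{|\overline{c}|,\|c_0\|_{L^\infty}\}+T\|f\|_{L^\infty}$ directly; if $\psi$ can be positive, choosing $\mu_0=\max_{\overline{Q^1_T}}\psi$ makes the transformed equation satisfy a comparison principle, and multiplying back by $e^{\mu_0\tau}\le e^{\mu_0 T}$ yields the stated exponential factor. I expect the main obstacle to be the quantitative gradient estimate of the third step, where the precise exponents of $T$ and $\delta$ must be tracked through the scaling of the embedding constant rather than merely asserting finiteness of the embedding; by contrast the existence, uniqueness and maximum-principle parts are routine once the problem is transferred to $\mathfrak{B}$ and the boundary data are homogenized.
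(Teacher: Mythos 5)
Your proposal is correct and takes essentially the same approach as the paper, which offers no self-contained proof here but presents the lemma explicitly as a merger of Lemma 3.1 in \cite{2.} and Lemma 3.3 in \cite{22.}. Your reconstruction --- recasting the radial problem as a parabolic Dirichlet problem on the unit ball after homogenizing the constant boundary datum, invoking the Ladyzhenskaya--Solonnikov--Uraltseva $L^p$ theory for existence, uniqueness and the $W^{2,1}_p$ estimate, citing Lemma 3.3 of \cite{22.} for the gradient bound with its explicit powers of $T$ and $\delta$, and using the exponential-weight maximum principle for the $L^{\infty}$ bound --- is precisely the content of those cited results.
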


\begin{lem}\label{Lemma2}\textup{(Lemma 3.2 in \cite{2.})} Let $z\left(\rho ,\tau \right)$, $h_{ij}(\rho ,\tau )\ (i, j=1,2,3)$ and $g_i(\rho ,\tau)\ (i= 1, 2, 3)$ be bounded continuous functions on $[0, 1]\times [0, T]$, $z\left(\rho ,\tau \right)$ be continuously differentiable with respect to $\rho$ and $z\left(0,\tau \right)=z\left(1,\tau \right)=0$. Then, for every ${\alpha }_0,\ {\beta }_0,\ {\gamma }_0\in \ C[0, 1]$, the problem
\[\dfrac{\partial\alpha }{\partial\tau }+z\left(\rho ,\tau \right)\dfrac{\partial\alpha }{\partial\rho }=h_{11}\left(\rho ,\tau \right)\alpha +h_{12}\left(\rho ,\tau \right)\beta +h_{13}\left(\rho ,\tau \right)\gamma+g_1(\rho,\tau),~0\le \rho \le 1, \ 0<\tau \le T, \]
\[\dfrac{\partial\beta }{\partial\tau }+z\left(\rho ,\tau \right)\dfrac{\partial\beta }{\partial\rho }=h_{21}\left(\rho ,\tau \right)\alpha +h_{22}\left(\rho ,\tau \right)\beta +h_{23}\left(\rho ,\tau \right)\gamma+g_2(\rho,\tau),~0\le \rho \le 1, \ 0<\tau \le T,\] 
\[\dfrac{\partial\gamma }{\partial\tau }+z\left(\rho ,\tau \right)\dfrac{\partial\gamma }{\partial\rho }=h_{31}\left(\rho ,\tau \right)\alpha +h_{32}\left(\rho ,\tau \right)\beta +h_{33}\left(\rho ,\tau \right)\gamma+g_3(\rho,\tau),~0\le \rho \le 1, \ 0<\tau \le T, \] 
\[\alpha \left(\rho ,0\right)={\alpha }_0\left(\rho \right),\ \beta \left(\rho ,0\right)={\beta }_0\left(\rho \right),\ \gamma \left(\rho ,0\right)={\gamma }_0\left(\rho \right),~\ 0\le \rho \le 1, \] 
has a unique weak solution which is continuous with respect to $(\rho ,\tau )$ and 
\[{\|(\alpha ,\beta ,\gamma) \|}_{L^{\infty }(Q^*_T)}\le e^{TA_0\left(T\right)}\left({\|({\alpha }_0,{\beta }_0,{\gamma }_0)\|}_{L^{\infty }(Q^*_T)}+T{\|(g_1,g_2,g_3)\|}_{L^{\infty }(Q^*_T)}\right),\] 
where $\|.\|_{L^\infty(Q^*_T)}$ is defined in Definition \ref{def33333}, $Q^*_T=]0,1[\times ]0,T[$ and $A_0(T)=2{\max \{{\|h_{ij}\|}_{L^{\infty }(Q^*_T)}i,j=1,2,3\}}$. If $h_{ij}(\rho ,\tau )\ (i, j=1, 2, 3)$ and $g_{i}(\rho ,\tau)\ (i= 1, 2, 3)$ are continuously differentiable with respect to $\rho$ and ${\alpha }_0,\ {\beta }_0,\ {\gamma }_0\in \ C^1[0, 1]$, then the weak solution of problem is a classical solution and we have
\begin{eqnarray*}
&{\left\|\left(\dfrac{\partial \alpha}{\partial\rho },\dfrac{\partial \beta}{\partial\rho },\dfrac{\partial \gamma}{\partial\rho }\right)\right\|}_{L^{\infty }(Q^*_T)}&\nonumber\\
 &\le e^{\left(A_0\left(T\right)+A\left(T\right)\right)T}\Big({\|({\alpha }_0',{\beta }_0',\gamma_0')\|}_{L^{\infty }(Q^*_T)}+TA_1(T)e^{TA\left(T\right)}{\|({\alpha }_0,{\beta }_0,{\gamma }_0)\|}_{L^{\infty }(Q^*_T)}&\\
 &+Te^{TA\left(T\right)}{\|(\dfrac{\partial g_1}{\partial\rho },\dfrac{\partial g_2}{\partial\rho },\dfrac{\partial g_3}{\partial\rho })\|}_{L^{\infty }(Q^*_T)}\Big),~~~~~&
\end{eqnarray*}
where $A\left(T\right)={\|\frac{\partial z}{\partial\rho }\|}_{L^{\infty }(Q^*_T)}$ and  $A_1(T)=2{\max \left\{{\|\frac{\partial h_{ij}}{\partial\rho} \|}_{L^{\infty }(Q^*_T)},~i,j=1,2,3\right\}}.$
\end{lem}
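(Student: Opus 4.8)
The plan is to solve the system by the method of characteristics, turning the first-order hyperbolic system into a linear system of ODEs along characteristic curves, and then to read off every estimate from Gr\"onwall's inequality applied along those curves. The structural heart is the characteristic family $\rho=X(\tau;\rho_0)$ defined by $\frac{dX}{d\tau}=z(X,\tau)$, $X(0;\rho_0)=\rho_0$. Since $z$ is continuous and continuously differentiable in $\rho$, it is locally Lipschitz in $\rho$, so through each initial point $(\rho_0,0)$ with $\rho_0\in[0,1]$ there passes exactly one characteristic. The hypothesis $z(0,\tau)=z(1,\tau)=0$ is what makes the construction work globally: the lines $\rho\equiv 0$ and $\rho\equiv 1$ are themselves characteristics, so by uniqueness no characteristic issuing from an interior point can cross them. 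Hence every characteristic starting in $[0,1]$ stays in $[0,1]$ for all $\tau\in[0,T]$, the characteristics foliate the whole strip $Q^*_T=]0,1[\times]0,T[$, no data on $\rho=0,1$ is needed, and the pure initial-value problem is well posed.

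Next I would fix a characteristic and set $V(\tau)=(\alpha,\beta,\gamma)(X(\tau;\rho_0),\tau)$. The three transport equations collapse to the linear system $V'(\tau)=H(X(\tau;\rho_0),\tau)\,V(\tau)+G(X(\tau;\rho_0),\tau)$, with $H=(h_{ij})$, $G=(g_1,g_2,g_3)$, and initial value $V(0)=(\alpha_0(\rho_0),\beta_0(\rho_0),\gamma_0(\rho_0))$. Because $H$ and $G$ are bounded and continuous, the classical Picard existence-uniqueness theorem gives a unique $V$ on $[0,T]$; continuous dependence of ODE solutions on $\rho_0$ together with continuity of the flow $\rho_0\mapsto X(\cdot;\rho_0)$ shows the resulting $(\alpha,\beta,\gamma)$ is continuous on $\overline{Q^*_T}$, so for merely continuous data it is the desired weak solution. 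Uniqueness follows from the a priori $L^\infty$ bound below: the difference of two solutions with the same data solves the homogeneous system with zero source and zero initial value, hence vanishes.

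The $L^\infty$ estimate comes straight from the ODE. Working with the norm of Definition \ref{def33333}, along each characteristic the combination $|\alpha|+|\beta|+|\gamma|$ satisfies a differential inequality of the form $\frac{d}{d\tau}\big(|\alpha|+|\beta|+|\gamma|\big)\le A_0(T)\big(|\alpha|+|\beta|+|\gamma|\big)+\big(|g_1|+|g_2|+|g_3|\big)$ with $A_0(T)$ as in the statement. Integrating Gr\"onwall's inequality over $[0,\tau]\subseteq[0,T]$ and taking the supremum over characteristics (which is the supremum over the strip) yields exactly $\|(\alpha,\beta,\gamma)\|_{L^\infty(Q^*_T)}\le e^{TA_0(T)}\big(\|(\alpha_0,\beta_0,\gamma_0)\|_{L^\infty(Q^*_T)}+T\|(g_1,g_2,g_3)\|_{L^\infty(Q^*_T)}\big)$.

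The delicate part, and where I expect the main obstacle, is the classical-solution statement and the derivative estimate. Assuming $h_{ij},g_i$ are continuously differentiable in $\rho$ and $\alpha_0,\beta_0,\gamma_0\in C^1[0,1]$, I would first upgrade the flow: since $z$ is $C^1$ in $\rho$, the map $\rho_0\mapsto X(\tau;\rho_0)$ is $C^1$ with $\partial_{\rho_0}X$ solving the variational equation, which legitimises differentiating the constructed solution and shows it is $C^1$, hence classical. Writing $a=\partial_\rho\alpha$, $b=\partial_\rho\beta$, $c=\partial_\rho\gamma$ and differentiating the system in $\rho$ produces a new transport system with the same speed $z$ and the same coupling $h_{ij}$, but with an extra diagonal term $-\frac{\partial z}{\partial\rho}$ and source terms $\sum_j\frac{\partial h_{ij}}{\partial\rho}v_j+\frac{\partial g_i}{\partial\rho}$, in which $(v_1,v_2,v_3)=(\alpha,\beta,\gamma)$ is already controlled by the previous step. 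Re-running the Gr\"onwall argument along characteristics---now with exponent $A_0(T)+A(T)$ to absorb both the coupling and the $\|\partial_\rho z\|$ damping, and with the source contributing the factors $A_1(T)$ and $\|(\partial_\rho g_1,\partial_\rho g_2,\partial_\rho g_3)\|$---reproduces the stated bound. The obstacle is precisely this last step: one must justify the $C^1$-dependence of the flow on $\rho_0$ so the differentiated equation is meaningful and the solution genuinely classical, and then track carefully how $A(T)$ and $A_1(T)$ enter the integrating factor so that the exponential constants match the claimed form.
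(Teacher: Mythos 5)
First, a point of orientation: the paper does not prove this lemma at all --- it is quoted, with attribution, as Lemma 3.2 of \cite{2.} --- so there is no in-paper proof to compare against; the honest comparison is with the standard argument behind that cited result, which is exactly the one you give. Your construction is the right one: since $z(0,\tau)=z(1,\tau)=0$, the lines $\rho\equiv 0$ and $\rho\equiv 1$ are themselves characteristics, uniqueness of the flow (available because $z$ is $C^1$ in $\rho$, hence Lipschitz on the compact strip) traps every characteristic in $[0,1]$, the system collapses to a linear ODE system along each characteristic, and existence, uniqueness, continuity in $(\rho,\tau)$, the $L^\infty$ bound, and --- after invoking $C^1$ dependence of the flow on $\rho_0$ and differentiating the equation in $\rho$ --- the classical-solution statement and the gradient bound all follow from Picard, continuous/differentiable dependence on data, and Gr\"onwall. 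Structurally this is the proof, and your identification of the differentiated system (same speed $z$, same coupling $h_{ij}$, extra diagonal term $-\partial_\rho z$, sources $\sum_j \partial_\rho h_{ij}\,v_j+\partial_\rho g_i$) is correct.

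There is, however, one concrete claim that fails: you assert that Gr\"onwall yields \emph{exactly} the stated bound with $A_0(T)=2\max_{i,j}\|h_{ij}\|_{L^\infty(Q^*_T)}$. It cannot. With the norm of Definition \ref{def33333}, summing the three differential inequalities along a characteristic gives the rate $\max_j\sum_i\|h_{ij}\|_{L^\infty(Q^*_T)}\le 3\max_{i,j}\|h_{ij}\|_{L^\infty(Q^*_T)}$: the factor is $3$, not $2$, because each unknown feeds into all three equations. The discrepancy is not cosmetic, since the inequality as stated is actually false in this generality: take $z\equiv 0$, $g_i\equiv 0$, $h_{ij}\equiv m>0$ for all $i,j$, and $\alpha_0=\beta_0=\gamma_0\equiv 1$; then $\alpha=\beta=\gamma=e^{3m\tau}$, so the left-hand side is $3e^{3mT}$ while the claimed right-hand side is $3e^{2mT}$, smaller for every $T>0$. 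So the constant $2$ in the quoted statement (plausibly carried over from a two-component analogue in the source literature) is not reachable by your argument --- or by any argument --- and the same bookkeeping caveat applies to how $A_0(T)+A(T)$ and $A_1(T)$ enter the derivative estimate. Your proof, correctly executed, establishes the lemma with $3\max_{i,j}\|h_{ij}\|$ (or the sharper maximal column sum) in place of $A_0(T)$; you should state that explicitly rather than claim the constants of the statement come out exactly. Everything else --- the trapping of characteristics, the broad-solution reading of ``weak solution'' and its uniqueness via the homogeneous estimate, and the variational equation justifying the classical-solution upgrade --- is sound.
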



\section{Model}
\label{OCP1111}

\noindent In this section, first we present a deterministic mathematical model of tumour growth introduced in \cite{2.}. Then, we add the stochastic terms to the deterministic model to consider random perturbations and have a more reliable model of tumour growth. After that we show  the stochastic model has a unique solution.

 Now,  we  present  the following deterministic parabolic-hyperbolic free boundary problem modelling tumour growth with drug application, which is  introduced in  \cite{2.} 
 \begin{equation} \label{1*Yzzzz} 
\frac{\partial C(x,t)}{\partial t}=D_1\Delta C(x,t)-(K_1\left(C\right)P+K_2\left(C\right)Q),~~~\ \ \ \ \ x\in \Omega(t)\subset\mathbb{R}^3,\ \ t>0, 
\end{equation} 
\begin{equation} \label{GrindEQ__3_2_Yzzzz} 
 C\left(x,t\right)=\overline{C}(t) ~~on ~\partial\Omega(t),  \ t>0,~~~  C\left(x,0\right)=C_0(x) ~~x\in  \Omega(0),
\end{equation} 
\begin{equation} \label{2*Yzzzz} 
\frac{\partial W(x,t)}{\partial t}=D_2\Delta W(x,t)-(K_3\left(W\right)P+K_4\left(W\right)Q 
),\ \ \ \ \ x\in \Omega(t),\ \ t>0, 
\end{equation} 
\begin{equation} \label{3*Yzzzz} 
 W\left(x,t\right)=\overline{W}(t) ~~on~ \partial\Omega(t),  \ t>0,~~~  W\left(x,0\right)=W_0(x) ~~x\in \Omega(0),
\end{equation} 
\[\frac{\partial P(x,t)}{\partial t}+\mathrm{div}(P(x,t)\overrightarrow{v}(x,t) )\]
\begin{equation} \label{6*Yzzzz} 
=\left[K_B\left(C\right)-K_Q\left(C\right)-K_A\left(C\right)-G_1\left(W\right)\right]P+K_P\left(C\right)Q ~\ x\in \Omega(t),\ t>0,
\end{equation} 
\begin{equation} \label{GrindEQ__3_6_Yzzzz} 
\frac{\partial Q(x,t)}{\partial t}+\mathrm{div}( Q(x,t)\overrightarrow{v}(x,t))=K_Q\left(C\right)P-\left[K_P\left(C\right)+K_D\left(C\right)+G_2\left(W\right)\right]Q~ \ x\in \Omega(t),\ t>0,
\end{equation} 
\begin{equation} \label{7*Yzzzz} 
\frac{\partial D(x,t)}{\partial t}+\mathrm{div}( D(x,t)\overrightarrow{v}(x,t))=[K_A\left(C\right)+G_1\left(W\right)]P+[K_D\left(C\right)+G_2\left(W\right)]Q-K_RD ~\ x\in \Omega(t),\ t>0,
\end{equation} 
\begin{equation} \label{yfza} 
P(x,t)+Q(x,t)+D(x,t)=N~\ x\in \Omega(t),\ t>0,
\end{equation} 
\begin{equation} \label{GrindEQ__3_11_Yzzzz} 
 P\left(x,0\right)=P_0\left(x\right),~~Q\left(x,0\right)=\ Q_0\left(x\right),\ \ \ D\left(x,0\right)=\ D_0\left(x\right),\ x \in\Omega(0),
\end{equation} 
\noindent where $\Omega(t)\subset\mathbb{R}^3$ is the tumour domain at time $t$ and $\overrightarrow{v}$ is the velocity of tumour cells' movement.    The concentrations of  nutrient (e.g., oxygen and glucose) and drug are shown by $C$ and $W$, respectively.  The densities of proliferative cells, quiescent cells and dead cells are presented by $P$, $Q$, and $D$, respectively.  The consumption rates of nutrient by  proliferative cells and quiescent cells are shown by $K_1 (C)$ and $K_2(C)$, respectively.  The consumption rates of drug by  proliferative cells and quiescent cells are presented by $K_3(W)$ and $K_4(W)$, respectively.  The dead rates of  proliferative cells and quiescent cells due to the drug are considered in the model by $G_1(W)$ and $G_2(W)$, respectively.  The transferring rate of quiescent cells to proliferative cells and the rate of  transferring proliferative cells to quiescent cells are presented by $K_P (C)$ and $K_Q(C)$, respectively.   The mitosis rate of proliferative cells that is dependent on nutrient level $C$,  the death rates of proliferative cells and quiescent cells are shown by $K_B(C)$, $K_A(C)$ and $K_D(C)$, respectively.   The constant rate of removing dead cells from the tumour is presented by $K_R$. It is assumed that the tumour tissue is a porous medium so that by Darcy's law, we have  
\begin{equation}\label{Ak6Yzzzz} 
\overrightarrow{v}=-\nabla\sigma~\ in~\Omega(t),\ t>0, 
\end{equation} 
and the boundary conditions for $\sigma$ are as follows:
\[\sigma=\theta\kappa_1~on~\partial\Omega(t),\ t>0,~ \dfrac{\partial \sigma}{\partial n}=-V_n ~on~\partial\Omega(t),\ t>0,\]
where $\sigma$ is the pressure in the tumour, $\theta$ is the surface tension, $\kappa_1$ is the mean curvature of the tumour
surface, $\dfrac{\partial}{\partial n}$ is the derivative in the direction $n$ of the outward normal, and $V_n$ is
the velocity of the free boundary $\partial\Omega(t)$ in the direction $n$. If we sum up (\ref{6*Yzzzz})--(\ref{7*Yzzzz}) then using  (\ref{yfza}) we have
\begin{equation}\label{ererer}
\mathrm{div}(\overrightarrow{v})=\frac{1}{N}\left[K_B\left(C\right)P-K_RD\right]~\ in~\Omega(t),\ t>0.
\end{equation} 
 Since, the tumour considered in this model grows radially symmetric, we can write
\begin{equation}\label{r45678800}
C=C(r,t),~W=W(r,t),~P=P(r,t),~Q=Q(r,t),~D=D(r,t),~~r\leq R(t),~t>0,
\end{equation} 
where $r=|x|$ and $R(t)$ is the radius of tumour at time $t$. Because $\sigma$ is  radially symmetric in the space variable, using (\ref{Ak6Yzzzz}) it is easy to derive that there exists a scalar function $\dot{v}=\dot{v}(|x|,t)$ such that \cite{7.}
\begin{equation}\label{veldar}
\overrightarrow{v}=\dfrac{x}{|x|}\dot{v}(|x|,t),~|x|\leq R(t),~t\geq0.
\end{equation}
Therefore, the model (\ref{1*Yzzzz})--(\ref{ererer})  becomes
\begin{equation} \label{1*} 
\frac{\partial C}{\partial t}=D_1\frac{1}{r^2}\frac{\partial }{\partial r}\left(r^2\frac{\partial C}{\partial r}\right)-f\left(C,P,Q\right),\ \ \ \ \ \ 0<r<R\left(t\right),\ \ t>0, 
\end{equation} 
\begin{equation} \label{GrindEQ__3_2_} 
\frac{\partial C}{\partial r}\left(r,t\right)=0\ at\ r=0,\ C\left(r,t\right)=\overline{C}(t)\ at\ r=R\left(t\right),\ \ t>0, 
\end{equation} 
\begin{equation} \label{2*1}  
 C\left(r,0\right)=C_0(r), ~~ 0\leq r\leq R_0, 
\end{equation} 
\begin{equation} \label{2*} 
\frac{\partial W}{\partial t}=D_2\frac{1}{r^2}\frac{\partial }{\partial r}\left(r^2\frac{\partial W}{\partial r}\right)-g\left(W,P,Q\right),\ \ \ \ \ \ 0<r<R\left(t\right),\ \ t>0, 
\end{equation} 
\begin{equation}
\frac{\partial W}{\partial r}\left(r,t\right)=0\ at\ r=0,\ W\left(r,t\right)=\overline{W}(t)\ at\ r=R\left(t\right),\  \ \ \ t>0, 
\end{equation} 
\begin{equation} \label{3*} 
 W\left(r,0\right)=W_0(r), ~~ 0\leq r\leq R_0, 
\end{equation} 
\begin{equation} \label{6*} 
\frac{\partial P}{\partial t}+\dot{v}\frac{\partial P}{\partial r}=g_{11}\left(C,W,P,Q,D\right)P+g_{12}\left(C,W,P,Q,D\right)Q+g_{13}\left(C,W,P,Q,D\right)D, 
\end{equation} 
\begin{equation} \label{GrindEQ__3_6_} 
\frac{\partial Q}{\partial t}+\dot{v}\frac{\partial Q}{\partial r}=g_{21}\left(C,W,P,Q,D\right)P+g_{22}\left(C,W,P,Q,D\right)Q+g_{23}\left(C,W,P,Q,D\right)D, 
\end{equation} 
\begin{equation} \label{7*} 
\frac{\partial D}{\partial t}+\dot{v}\frac{\partial D}{\partial r}=g_{31}\left(C,W,P,Q,D\right)P+g_{32}\left(C,W,P,Q,D\right)Q+g_{33}\left(C,W,P,Q,D\right)D, 
\end{equation} 
\[0\le r\le R\left(t\right),\ \ t>0,\] 
\begin{equation}\label{Ak6} 
\frac{1}{r^2}\frac{\partial }{\partial r}\left(r^2\dot{v}\right)=h\left(C,W,P,Q,D\right),\ \ 0<r\leq R\left(t\right),\ \ t>0, 
\end{equation} 
\begin{equation}
\dot{v}\left(0,t\right)=0,\ \ t>0, 
\end{equation} 
\begin{equation} \label{Ak7} 
\frac{dR(t)}{dt}=\dot{v}\left(R\left(t\right),t\right),\ \ t>0, 
\end{equation} 
\begin{equation} \label{GrindEQ__3_11_} 
 P\left(r,0\right)=P_0\left(r\right),~~Q\left(r,0\right)=\ Q_0\left(r\right),\ \ \ D\left(r,0\right)=\ D_0\left(r\right),\ \ \ R(0)=R_0,\ \ \ \ 0\le r\le R_0, 
\end{equation} 
where (\ref{Ak6}) comes from (\ref{ererer})--(\ref{veldar}),  also
\[f\left(C,P,Q\right)=K_1\left(C\right)P+K_2\left(C\right)Q,~~~g\left(W,P,Q\right)=K_3\left(W\right)P+K_4\left(W\right)Q,\] 
\[g_{11}\left(C,W,P,Q,D\right)=\left[K_B\left(C\right)-K_Q\left(C\right)-K_A\left(C\right)-G_1\left(W\right)\right]-\frac{1}{N}\left[K_B\left(C\right)P-K_RD\right],\] 
\[g_{12}\left(C,W,P,Q,D\right)=K_P\left(C\right),~~~g_{13}\left(C,W,P,Q,D\right)=0,~g_{21}\left(C,W,P,Q,D\right)=K_Q\left(C\right),\]
\[g_{22}\left(C,W,P,Q,D\right)=-\left[K_P\left(C\right)+K_D\left(C\right)+G_2\left(W\right)\right]-\frac{1}{N}\left[K_B\left(C\right)P-K_RD\right],~g_{23}\left(C,W,P,Q,D\right)=0,\] 
\[g_{31}\left(C,W,P,Q,D\right)=K_A\left(C\right)+G_1\left(W\right),~~~g_{32}\left(C,W,P,Q,D\right)=K_D\left(C\right)+G_2\left(W\right),\] 
\[g_{33}\left(C,W,P,Q,D\right)=-K_R-\frac{1}{N}\left[K_B\left(C\right)P-K_RD\right],~~~h\left(C,W,P,Q,D\right)=\frac{1}{N}\left[K_B\left(C\right)P-K_RD\right].\] 
In order to change the domain of the model from a domain with moving boundary $R(t)$ to a domain with fixed boundary, we have used  the following change of variables  \cite{2.} 
\begin{equation}\label{changevar1}
\rho =\frac{r}{R\left(t\right)},~\tau =\int^t_0{\frac{ds}{R^{{\rm 2}}\left(s\right)}}~,\eta \left(\tau \right){\rm =}R\left(t\right),~c\left(\rho ,\tau \right){\rm =}C\left(r,t\right),~w\left(\rho ,\tau \right){\rm =}W\left(r ,t \right),
\end{equation}
\begin{equation}\label{changevar2}
p\left(\rho ,\tau \right){\rm =}P\left(r,t\right),~q\left(\rho ,\tau \right){\rm =}Q\left(r,t\right),~ a\left(\rho ,\tau \right){\rm =}D\left(r,t\right),~u\left(\rho ,\tau \right)=R\left(t\right)\dot{v}\left(r,t\right){\rm}.
\end{equation}
 Using the change of variables \eqref{changevar1}--\eqref{changevar2}, Equations \eqref{1*}--\eqref{2*1} change to the following parabolic equation
 \begin{equation*}
\dfrac{\partial c(\rho,\tau)}{\partial\tau}=D_{1}\dfrac{1}{\rho^2}\dfrac{\partial }{\partial\rho }\left({\rho }^{2}\dfrac{\partial c(\rho,\tau)}{\partial\rho }\right)+u\left(1, \tau \right)\rho \dfrac{\partial c(\rho,\tau)}{\partial\rho }-{\eta } ^{2}f\left(c,p,q\right),~~{\rm \ 0}<\rho<1,\ \tau {\rm >}0, 
\end{equation*}
\begin{equation*}
\dfrac{\partial c}{\partial\rho }\left(0,\tau \right){\rm =0, }~c\left({\rm 1,}\tau \right){\rm =}c_1(\tau), ~~\ \tau {\rm >}0,
\end{equation*} 
\begin{equation*}
c\left(\rho ,0\right){\rm =}c_0\left(\rho \right),~{\rm  \ 0}\le \rho \le {\rm 1,}
\end{equation*}
and Equations \eqref{2*}--\eqref{3*} change to
\begin{equation*}
\dfrac{\partial w(\rho,\tau)}{\partial\tau}=D_{2}\dfrac{1}{{\rho }^{2}}\dfrac{\partial }{\partial\rho }\left({\rho }^{2}\dfrac{\partial w(\rho,\tau)}{\partial\rho }\right)+u\left(1, \tau \right)\rho \dfrac{\partial w(\rho,\tau)}{\partial\rho }-{\eta }^{2}g\left(w,p,q\right),~~{\rm \ 0}<\rho<1,\ \tau {\rm >}0,
\end{equation*}
\begin{equation*}
\dfrac{\partial w}{\partial\rho }\left(0,\tau \right){\rm =0, }~w\left({\rm 1,}\tau \right){\rm =}w_1(\tau), ~~\ \tau {\rm >}0,
\end{equation*}
\begin{equation*}
w\left(\rho ,0\right){\rm =}w_0(\rho),~{\rm \ \ 0}\le \rho \le {\rm 1,}
\end{equation*}
and Equations \eqref{6*}--\eqref{7*} change to the following hyperbolic equations
\begin{eqnarray*}
&\dfrac{\partial p(\rho,\tau)}{\partial\tau}{\rm +}v(\rho,\tau)\dfrac{\partial p(\rho,\tau)}{\partial\rho }{\rm =}
{\eta^2(g_{{\rm 11}}\left(c,w,p,q,a\right)p{\rm +}g_{{\rm 12}}\left(c,w,p,q,a\right)q{\rm +}g_{{\rm 13}}\left(c,w,p,q,a\right)a)},&\\  
&\dfrac{\partial q(\rho,\tau)}{\partial\tau}{\rm +}v(\rho,\tau)\dfrac{\partial q(\rho,\tau)}{\partial\rho }{\rm =} 
\eta^2(g_{{\rm 21}}\left(c,w,p,q,a\right)p{\rm +}g_{{\rm 22}}\left(c,w,p,q,a\right)q{\rm +}g_{{\rm 23}}\left(c,w,p,q,a\right)a),&\\  
&\dfrac{\partial a(\rho,\tau)}{\partial\tau}{\rm +}v(\rho,\tau)\dfrac{\partial a(\rho,\tau)}{\partial\rho }{\rm =}
\eta^2(g_{{\rm 31}}\left(c,w,p,q,a\right)p{\rm +}g_{{\rm 32}}\left(c,w,p,q,a\right)q{\rm +}g_{{\rm 33}}\left(c,w,p,q,a\right)a),&
\end{eqnarray*}
\[{\rm 0}\le \rho \le {\rm 1,\ }\tau {\rm >}0,\]
\begin{equation*}
~~~~~~~~~~~~~~~~~~~p\left(\rho,0\right)=\ p_0\left(\rho\right),~~~q\left(\rho,0\right)=\ q_0\left(\rho\right),~~~ a\left(\rho,0\right)=a_0\left(\rho\right),~~~~{\rm 0}\le \rho \le {\rm 1,\ }
\end{equation*}
and the equations \eqref{Ak6}--\eqref{Ak7} change to the following  equations
\begin{eqnarray*} 
&\dfrac{{\rm 1}}{{\rho }^{{\rm 2}}}\dfrac{\partial }{\partial\rho }\left({\rho }^{{\rm 2}}u\right){\rm =}\eta^2(\tau)h\left(c,w,p,q,a\right),~{\rm 0<}\rho {\rm \leq}1,\ \tau {\rm >}0,~u\left(0,\tau \right){\rm =0,}~\tau> 0,&\nonumber\\  
&\dfrac{d \eta}{d\tau}=\eta \left(\tau \right)u\left( 1,\tau \right),~\tau> 0,~~\eta(0)=\eta_0,&\\
&v\left(\rho ,\tau \right){\rm =}u\left(\rho ,\tau \right){\rm -}\rho u\left({\rm 1,}\tau \right),~~{\rm  0}\le \rho \le {\rm 1,\ }\tau {\rm >}0,&\nonumber 
\end{eqnarray*}
where 
\[c_1(\tau){\rm =}\overline{C}(t){\rm ,\ \ }w_1(\tau){\rm =}\overline{W}(t){\rm ,\ \ }c_0\left(\rho \right){\rm =}C_0\left(\rho R_0\right){\rm ,\ \ }w_0\left(\rho \right){\rm =}W_0\left(\rho R_0\right),\]
\[p_0\left(\rho \right){\rm =}P_0\left(\rho R_0\right){\rm ,\ \ }q_0\left(\rho \right){\rm =}Q_0\left(\rho R_0\right){\rm ,\ \ }a_0\left(\rho \right){\rm =}D_0\left(\rho R_0\right),~{\eta }_0{\rm =}R_0.\]
 Also, it is assumed that the initial data satisfy the following conditions
\begin{eqnarray}\label{initial1}
&0\le c_0\left(\rho\right)\le c_1(0),~ 0\le w_0\left(\rho\right)\le w_1(0),~ 0\le \rho\le 1,\nonumber&\\
&\dfrac{\partial c_0(\rho)}{\partial \rho}=0~ at~\rho=0,~ c_0(1)=c_1(0),~~\dfrac{\partial w_0(\rho)}{\partial \rho}=0~ at~\rho=0,~w_0(1)=w_1(0),&\\
&p_0\left(\rho\right)\ge 0,\ \ q_0\left(\rho\right)\ge 0,\ \ a_0\left(\rho\right)\ge 0,~~{p}_0\left(\rho\right)+q_0\left(\rho\right)+a_0\left(\rho\right)=N,~ 0\le \rho\le 1\nonumber.&
\end{eqnarray}
In the following, we have added stochastic terms to the deterministic model of tumour growth to consider random perturbations and uncertainties. The obtained stochastic model is as follows
\begin{equation}\label{YAS0o0o0o}
d c=D_{1}\dfrac{1}{\rho^2}\dfrac{\partial }{\partial\rho }\left({\rho }^{2}\dfrac{\partial c}{\partial\rho }\right)d\tau+u\left(1, \tau \right)\rho \dfrac{\partial c}{\partial\rho }d\tau-{\eta } ^{2}f\left(c,p,q\right)d\tau+c\sum_{i=0}^mh^1_id B_i,~ 
\end{equation}
\[{\rm \ 0}<\rho<1,\ \tau {\rm >}0,\]
\begin{equation}\label{jadid10o0o0o}
\dfrac{\partial c}{\partial\rho }\left(0,\tau \right){\rm =0, }~c\left({\rm 1,}\tau \right){\rm =}c_1(\tau), ~~\ \tau {\rm >}0,
\end{equation} 
\begin{equation}\label{opb10o0o0o}
c\left(\rho ,0\right){\rm =}c_0\left(\rho \right),~{\rm  \ 0}\le \rho \le {\rm 1,}
\end{equation}
\begin{equation}\label{YAS6767670o0o0o}
{d w}=D_{2}\dfrac{1}{{\rho }^{2}}\dfrac{\partial }{\partial\rho }\left({\rho }^{2}\dfrac{\partial w}{\partial\rho }\right)d\tau+u\left(1, \tau \right)\rho \dfrac{\partial w}{\partial\rho }d\tau-{\eta }^{2}g\left(w,p,q\right)d\tau+w\sum_{i=0}^mh^2_id B_i,~
\end{equation}
\[{\rm \ 0}<\rho<1,\ \tau {\rm >}0,\]
\begin{equation}\label{jadid20o0o0o}
\dfrac{\partial w}{\partial\rho }\left(0,\tau \right){\rm =0, }~w\left({\rm 1,}\tau \right){\rm =}w_1(\tau), ~~\ \tau {\rm >}0,
\end{equation}
\begin{equation}\label{YAS10o0o0o}
w\left(\rho ,0\right){\rm =}w_0(\rho),~{\rm \ \ 0}\le \rho \le {\rm 1,}
\end{equation}
\begin{eqnarray}\label{pqd0o0o0o}
&{d p}{\rm +}v\dfrac{\partial p}{\partial\rho }d\tau{\rm =}
{\smallunderbrace{\eta^2(g_{{\rm 11}}\left(c,w,p,q,a\right)p{\rm +}g_{{\rm 12}}\left(c,w,p,q,a\right)q{\rm +}g_{{\rm 13}}\left(c,w,p,q,a\right)a)}_{G_1\left(\eta,c,w,p,q,a\right)}d\tau}+p\displaystyle\sum_{i=0}^mh^3_id B_i{\rm , }~&\\  
&{d q}{\rm +}v\dfrac{\partial q}{\partial\rho }d\tau{\rm =} 
\smallunderbrace{\eta^2(g_{{\rm 21}}\left(c,w,p,q,a\right)p{\rm +}g_{{\rm 22}}\left(c,w,p,q,a\right)q{\rm +}g_{{\rm 23}}\left(c,w,p,q,a\right)a)}_{G_2\left(\eta,c,w,p,q,a\right)}d\tau+q\displaystyle\sum_{i=0}^mh^4_id B_i,&\\  
&{da}{\rm +}v\dfrac{\partial a}{\partial\rho }d\tau{\rm =}
\smallunderbrace{\eta^2(g_{{\rm 31}}\left(c,w,p,q,a\right)p{\rm +}g_{{\rm 32}}\left(c,w,p,q,a\right)q{\rm +}g_{{\rm 33}}\left(c,w,p,q,a\right)a)}_{G_3\left(\eta,c,w,p,q,a\right)}d\tau+a\displaystyle\sum_{i=0}^mh^5_id B_i,&
\end{eqnarray}
\[{\rm 0}\le \rho \le {\rm 1,\ }\tau {\rm >}0,\]
\begin{equation}\label{YAS20o0o0o}
~~~~~~~~~~~~~~~~~~~p\left(\rho,0\right)=\ p_0\left(\rho\right),~~~q\left(\rho,0\right)=\ q_0\left(\rho\right),~~~ a\left(\rho,0\right)=a_0\left(\rho\right),~~~~{\rm 0}\le \rho \le {\rm 1,\ }
\end{equation}
\begin{eqnarray}\label{HM3450o0o0o} 
&\dfrac{{\rm 1}}{{\rho }^{{\rm 2}}}\dfrac{\partial }{\partial\rho }\left({\rho }^{{\rm 2}}u\right){\rm =}\eta^2(\tau)h\left(c,w,p,q,a\right),~{\rm 0<}\rho {\rm \leq}1,\ \tau {\rm >}0,~u\left(0,\tau \right){\rm =0,}~\tau> 0,&\nonumber\\  
&{d\eta  (\tau)}=\eta \left(\tau \right)u\left( 1,\tau \right)d\tau+\eta d B_*,~\tau> 0,~~\eta(0)=\eta_0,&\\
&v\left(\rho ,\tau \right){\rm =}u\left(\rho ,\tau \right){\rm -}\rho u\left({\rm 1,}\tau \right),~~{\rm  0}\le \rho \le {\rm 1,\ }\tau {\rm >}0,&\nonumber 
\end{eqnarray}
\[B_*=\sum_{i=0}^mr_iB_i,\]
where  $B(\tau)=(B_0(\tau),\ldots, B_m(\tau))$ is an $m-$dimensional   Brownian motion on $(\mathbf{\Omega},\mathcal{F},\mathbb{P})$ with associated natural filtration $\{\mathcal{F}(\tau)\}_{\tau\geq 0}$ and $r_i$ $(i=0,\ldots,m)$ is constant.
\begin{assumption}
\label{assumpt}
\noindent In  stochastic model (\ref{YAS0o0o0o})--(\ref{HM3450o0o0o}), it is assumed that

\noindent \textbf{A.~} The rates $K_1\left(c\right)$, $K_2\left(c\right)$, $K_3\left(w\right)$, $K_4\left(w\right)$,   $K_A\left(c\right),~K_B\left(c\right),~K_D\left(c\right),~K_P\left(c\right)$, $K_Q(c)$, $G_1(w)$ and $G_2(w)$ are  $C^2$-smooth functions.

\noindent \textbf{B.~} Initial data $p_0,{q}_0$ and $a_0$ are non-negative $C^1$-smooth functions on $\left[0,R_0\right].$

\noindent \textbf{C.~}  Functions $c(|x|,\tau)$ and $w(|x|,\tau)$ on $\partial Q^1_T\setminus(\mathfrak{B}\times\{t=T\})$ are non-negative, also $c(|x|,\tau)=\Psi(x,\tau)$ and $w(|x|,\tau)=\Psi_1(x,\tau)$ on $\partial Q^1_T\setminus(\mathfrak{B}\times\{t=T\})$, where $Q^1_T:=\{(x,\tau)\in \mathbb{R}^{3}\times \mathbb{R}:~ |x|< 1,~0<\tau\leq T\}$, $\mathfrak{B}=\left\{ x\in {\rm \ \mathbb{R}^3:~}| x|\le 1\right\}{\rm}$  and $\Psi(x,\tau),~\Psi_1(x,\tau)\in C^{2+\alpha,1+\frac{\alpha}{2}}(\overline{ Q^1_T})$ (which is defined in Definition \ref{def1}) for some $0<\alpha<1$.

\noindent \textbf{D.~}  For $i=0,1,\ldots,m$ and $j=1,2,\ldots,5$, $h_i^j$ is $C^2$-smooth function and $\dfrac{\partial h_i^j}{\partial \rho}(0)=h_i^j(1)=0$. \\
\end{assumption}

\begin{lem}\label{Lemma 1ito} Let $f\left(\rho ,\tau \right),~\psi(\rho,\tau)$ and $\varphi (\tau )$ be bounded continuous functions on  $ [0, 1]\times [0, T] $ and $\ [0, T]\ (T>0),$ respectively and  $Q^*_T=]0,1[\times ]0,T[$.
Let $\overline{c}$ be  constant and ${c}_{ 0}$ be a function on $[0,1]$ such that $c_0(\left|x\right|)\in D_p(\mathfrak{B})$  for some $p>5$, where $\mathfrak{B}$ is a unit ball in $\mathbb{R}^3 $. Then, the following problem
\begin{equation}\label{ito3}
{d c }=\dfrac{1}{{\rho }^2}\dfrac{\partial }{\partial\rho }\left({\rho }^2\dfrac{\partial c }{\partial\rho }\right)d\tau+\varphi \left(\tau \right)\rho \dfrac{\partial c }{\partial\rho }d\tau+\psi\left(\rho ,\tau \right)cd\tau+f(\rho,\tau)d\tau+c\sum_{i=0}^mh_i(\rho)dB_i ,~~ 0<\rho <1,~ 0< \tau \leq T,
\end{equation} 
\[ \dfrac{\partial c}{\partial\rho}(0,\tau)=0,~~c \left(1,\tau \right)=\overline{c},~~ 0\leq\tau\leq T,~c \left(\rho ,0\right)=c_0(\rho),~~ 0\leq\rho \leq 1,\] 
 for almost every $\omega\in \mathbf{\Omega}$ has a unique continuous solution $c=e^{\sum_{i=0}^mh_i(\rho)B_i}\mathfrak{C}$  such that $\mathfrak{C}(|x|,\tau )\in W^{2,1}_p(Q^1_T)$ ($x\in \mathfrak{B}$) and $c$ is $\mathcal{F}(\tau)-$adapted and
\begin{equation}\label{GH1}
 {||\mathfrak{C} (|x|,\tau )||}_{W^{2,1}_p(Q^1_T)}\le  \mu_1'(\omega)\left(|\overline{c }|+{\left|\left|{c}_0\left(|x|\right)\right|\right|}_{D_p\left(\mathfrak{B}\right)}+||f(|x|,\tau ) ||_{L^p(Q^1_T)}\right),
\end{equation}
\begin{equation}\label{GH2}
{||\partial_x^\beta \mathfrak{C} (|x|,\tau ) ||}_{L^{\infty }(Q^1_T)}\le s'_1(\omega)T^{\frac{1}{2}-\frac{5}{2p}} {||\mathfrak{C} (|x|,\tau )||}_{W^{2,1}_p(Q^1_T)}+s_2'(\omega)\delta^{-1-\frac{5}{p}}||\mathfrak{C}(|x|,\tau )||_{L^p(Q^1_T)},
\end{equation}
and 
\begin{equation}\label{GH3}{||\mathfrak{C}(|x|,\tau ) ||}_{L^{\infty }(Q^1_T)}\le e^{\mu_2'(\omega)T}\Big(\max\{|\overline{c}|+{\left|\left|c_0(|x|,\tau )\right|\right|}_{L^{\infty }(Q^1_T)}\}+T||f(|x|,\tau ) ||_{L^\infty(Q^1_T)}\Big),
\end{equation} 
where $\mu_1'(\omega)$, $s'_1(\omega)$, $s'_2(\omega)$ and $\mu_2'(\omega)$ are positive functions of $\omega$, $|\beta|=1$, $\delta=\min\{d,\sqrt{T}\}$ and $d$ is a positive constant. Moreover, if  $\overline{c}=c_0(\rho)=0$, then
\begin{equation}\label{eqGkkp}
  \int_{\mathfrak{B}} (\mathfrak{C}(|x|,\tau))^2dx\leq e^{\lambda_1^*(\omega)\tau}\int_0^\tau \int_{\mathfrak{B}}  (f(|x|,s))^2dxds~a.s.,\end{equation}
where  
 $\lambda_1^*(\omega)$ depends on $T,~p, ~\|\varphi\|_{L^\infty(Q^*_T)},~\|\psi\|_{L^\infty(Q^*_T)},~\|\dfrac{\partial h_i}{\partial \rho}\|_{L^\infty(Q^*_T)},~\|\dfrac{\partial^2 h_i}{\partial \rho^2}\|_{L^\infty(Q^*_T)}$ and $\|h_i\|_{L^\infty(Q^*_T)}$.
  
\end{lem}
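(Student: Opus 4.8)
The plan is to remove the multiplicative noise by the substitution $c=e^{H}\mathfrak{C}$, where $H(\rho,\tau):=\sum_{i=0}^m h_i(\rho)B_i(\tau)$, and to seek $\mathfrak{C}$ as a process that, for almost every $\omega$, has finite variation in $\tau$ (no stochastic-integral part), so that $c$ is automatically $\mathcal{F}(\tau)$-adapted. Since $H$ carries no drift ($h_i$ depends only on $\rho$), Itô's formula gives $d(e^{H})=e^{H}\big(\sum_i h_i\,dB_i+\tfrac12\sum_i h_i^2\,d\tau\big)$, and the product rule yields $dc=\mathfrak{C}\,d(e^{H})+e^{H}\,d\mathfrak{C}$. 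The martingale part of $dc$ is then $e^{H}\mathfrak{C}\sum_i h_i\,dB_i=c\sum_i h_i\,dB_i$, which matches \eqref{ito3} by construction, so the equation reduces to a pathwise identity for the drifts. Passing to the ball $\mathfrak{B}\subset\mathbb{R}^3$ (where $\tfrac{1}{\rho^2}\partial_\rho(\rho^2\partial_\rho)=\Delta$ and $\rho\,\partial_\rho=x\cdot\nabla$), inserting $\nabla c=e^{H}(\mathfrak{C}\nabla H+\nabla\mathfrak{C})$ and $\Delta c=e^{H}\big(\Delta\mathfrak{C}+2\nabla H\cdot\nabla\mathfrak{C}+(|\nabla H|^2+\Delta H)\mathfrak{C}\big)$, and cancelling $e^{H}$, one arrives at the deterministic (pathwise) linear parabolic problem
\[
\mathfrak{C}_\tau=\Delta\mathfrak{C}+\vec b\cdot\nabla\mathfrak{C}+\tilde\psi\,\mathfrak{C}+e^{-H}f,
\]
with $\vec b:=2\nabla H+\varphi\,x$ and $\tilde\psi:=|\nabla H|^2+\Delta H+\varphi\,(x\cdot\nabla H)+\psi-\tfrac12\sum_i h_i^2$, the last term being the Itô correction.

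Next I would check that the data transform correctly and that $\vec b,\tilde\psi$ are admissible. Using $h_i(1)=0$ from Assumption~\ref{assumpt}(D) gives $H(1,\tau)=0$, hence $\mathfrak{C}(1,\tau)=\overline{c}$; using $h_i'(0)=0$ gives $H_\rho(0,\tau)=0$, so the Neumann condition $c_\rho(0,\tau)=0$ becomes $\mathfrak{C}_\rho(0,\tau)=0$, while $\mathfrak{C}(\rho,0)=c_0(\rho)$ is immediate. The only place the coefficients could fail to be bounded is at $\rho=0$, where $\Delta H=H_{\rho\rho}+\tfrac2\rho H_\rho$ and $\tfrac{H_\rho}{\rho}$ appear; since $h_i\in C^2$ with $h_i'(0)=0$, these are controlled by $\|h_i''\|_{L^\infty}$, and the a.s.\ continuity of Brownian paths makes $\vec b,\tilde\psi$ bounded continuous on $\overline{Q^1_T}$ for every $\omega$ outside a fixed $\mathbb{P}$-null set, with norms governed by $\sup_{[0,T]}|B_i(\tau,\omega)|$ together with $\|h_i\|,\|\partial_\rho h_i\|,\|\partial_\rho^2 h_i\|,\|\varphi\|,\|\psi\|$.

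With this in hand I would apply the deterministic theory of Lemma~\ref{Lemma 1} (equivalently, the $L^p$ parabolic maximal-regularity estimates it rests on, now carrying the bounded continuous drift $\vec b$) pathwise: for a.e.\ $\omega$ the problem for $\mathfrak{C}$ has a unique $\mathfrak{C}(\cdot,\cdot,\omega)\in W^{2,1}_p(Q^1_T)$ satisfying the three bounds, where $\mu_1'(\omega),s_1'(\omega),s_2'(\omega),\mu_2'(\omega)$ are the deterministic constants evaluated at the $\omega$-dependent coefficient norms above; this gives \eqref{GH1}--\eqref{GH3}. Uniqueness and continuity of $c$ follow because $\mathfrak{C}\mapsto c=e^{H}\mathfrak{C}$ is a pathwise bijection and $W^{2,1}_p\hookrightarrow C(\overline{Q^1_T})$ for $p>5$, while adaptedness holds since the coefficients of the $\mathfrak{C}$-equation at time $\tau$ are $\mathcal{F}(\tau)$-measurable, hence so is $\mathfrak{C}(\tau)$ and $c(\tau)=e^{H(\tau)}\mathfrak{C}(\tau)$. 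Finally, for \eqref{eqGkkp} with $\overline{c}=c_0=0$, I would test the $\mathfrak{C}$-equation against $\mathfrak{C}$ on $\mathfrak{B}$, integrate by parts (boundary terms vanishing since $\mathfrak{C}|_{\partial\mathfrak{B}}=0$) so that $\int_{\mathfrak B}\Delta\mathfrak{C}\,\mathfrak{C}\,dx=-\int_{\mathfrak B}|\nabla\mathfrak{C}|^2\,dx$, estimate $\int_{\mathfrak B}(\vec b\cdot\nabla\mathfrak{C})\,\mathfrak{C}\,dx$ by Young's inequality against this Dirichlet energy and absorb the gradient, bound the zeroth-order term by $\|\tilde\psi\|_{L^\infty}\int_{\mathfrak B}\mathfrak{C}^2\,dx$ and the source by $\int_{\mathfrak B}e^{-2H}f^2\,dx$, and close with Grönwall to obtain the factor $e^{\lambda_1^*(\omega)\tau}$ with $\lambda_1^*(\omega)$ depending exactly on the listed norms (the bound $e^{-2H}\le e^{2\|H\|_{L^\infty}}$ being folded into $\lambda_1^*$).

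The main obstacle is twofold. First, the noise removal generates a genuine first-order term $2\nabla H\cdot\nabla\mathfrak{C}$ (that is, $2H_\rho\,\mathfrak{C}_\rho$) which is \emph{not} of the special form $\varphi(\tau)\rho\,\mathfrak{C}_\rho$ appearing in Lemma~\ref{Lemma 1}; one must therefore be sure the underlying parabolic $L^p$ estimate tolerates an arbitrary bounded continuous drift and that regularity up to $\rho=0$ survives, which is precisely why Assumption~\ref{assumpt}(D) imposes $h_i'(0)=h_i(1)=0$. Second, every constant becomes random, so the delicate point is to verify that the deterministic estimates depend on the coefficients only through norms that are a.s.\ finite (controlled by $\sup_{[0,T]}|B_i|$), which is what makes \eqref{GH1}--\eqref{eqGkkp} hold for almost every $\omega$ with the stated dependence.
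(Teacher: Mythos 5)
Your proposal follows essentially the same route as the paper's proof: the same exponential substitution $c=e^{\sum_{i}h_i(\rho)B_i}\mathfrak{C}$ removing the noise via the It\^o product rule, the same pathwise reduction to a deterministic parabolic problem (with identical transformed coefficients $\varphi_1$ and $\psi_1$) handled by Lemma \ref{Lemma 1}, the same transformation back to get existence and uniqueness for \eqref{ito3}, and the same energy-plus-Gronwall argument for \eqref{eqGkkp} with $e^{-2\sum_i h_iB_i}$ absorbed into $\lambda_1^*(\omega)$. The differences are matters of emphasis only: you are more explicit about needing $h_i'(0)=h_i(1)=0$ and about extending Lemma \ref{Lemma 1} to a general bounded drift $\varphi_1(\rho,\tau)\partial_\rho$, whereas the paper is more careful on adaptedness, localizing on the events $A_n=\{|B_k(\tau)|<n+1\}$ and invoking a measurability theorem rather than your brief measurability remark.
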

\noindent{\it Proof } See Appendix.\qed\\
\begin{remark}
In Lemma \ref{Lemma 1ito}, we have assumed that $p>5$ to use $t-$Anisotropic Embedding Theorem (Theorem 1.4.1 in \cite{21.}) and Lemma 3.3 in \cite{22.} to be able to prove inequality \eqref{GH2}. Since Lemma \ref{Lemma 1ito} is used to study the solutions of the parabolic equations \eqref{YAS} and \eqref{YAS676767} describing the concentrations of nutrient and drug, therefore condition $p>5$ guarantees the boundedness of $\frac{\partial c}{\partial \rho}$ and $\frac{\partial w}{\partial \rho}$. It means that the concentrations of drug and nutrient are of bounded variation.  Since $\frac{\partial c}{\partial \rho}$ and $\frac{\partial w}{\partial \rho}$ are used in $f^*$ and $g^*$ (see equations \eqref{YAS*1e1} and \eqref{YIH}), boundedness of $\frac{\partial c}{\partial \rho}$ and $\frac{\partial w}{\partial \rho}$ helps us to prove Theorems \ref{AMHZ} and \ref{???} that result in the existence and uniqueness of optimal control. So,  the density of tumour cells can be limited by controlling the control variables affecting the concentrations of drug and nutrient on the boundary and inside the tumour.
\end{remark}

\begin{lem}\label{Lemma2ito} Let $z\left(\rho ,\tau \right)$ , $h_{ij}(\rho ,\tau )\ (i, j=1,2,3)$ and $g_i(\rho ,\tau)\ (i= 1, 2, 3)$ be bounded continuous functions on $[0, 1]\times [0, T]$, $z\left(\rho ,\tau \right)$ be continuously differentiable with respect to $\rho$ and $z\left(0,\tau \right)=z\left(1,\tau \right)=0$. Then, for every ${\alpha }_0,\ {\beta }_0,\ {\gamma }_0\in \ C[0, 1]$, the problem
\begin{equation}\label{ito12}
{d\alpha }+z\left(\rho ,\tau \right)\dfrac{\partial\alpha }{\partial\rho }d\tau=\Big(h_{11}\left(\rho ,\tau \right)\alpha +h_{12}\left(\rho ,\tau \right)\beta +h_{13}\left(\rho ,\tau \right)\gamma+g_1(\rho,\tau)\Big)d\tau+\alpha dW_1,
\end{equation}
\begin{equation}
{d\beta }+z\left(\rho ,\tau \right)\dfrac{\partial\beta }{\partial\rho }d\tau=\Big(h_{21}\left(\rho ,\tau \right)\alpha +h_{22}\left(\rho ,\tau \right)\beta +h_{23}\left(\rho ,\tau \right)\gamma+g_2(\rho,\tau)\Big)d\tau+\beta dW_2,
\end{equation} 
\begin{equation}\label{ito13}
{d\gamma }+z\left(\rho ,\tau \right)\dfrac{\partial\gamma }{\partial\rho }d\tau=\Big(h_{31}\left(\rho ,\tau \right)\alpha +h_{32}\left(\rho ,\tau \right)\beta +h_{33}\left(\rho ,\tau \right)\gamma+g_3(\rho,\tau)\Big)d\tau+\gamma dW_3,
\end{equation} 
\[~0\le \rho \le 1, \ 0<\tau \le T, \]
\[\alpha \left(\rho ,0\right)={\alpha }_0\left(\rho \right),\ \beta \left(\rho ,0\right)={\beta }_0\left(\rho \right),\ \gamma \left(\rho ,0\right)={\gamma }_0\left(\rho \right),~\ 0\le \rho \le 1, \] 
\[W_j=\sum_{i=0}^mh^j_i(\rho)B_i,~~j=1,2,3,\]
for almost every $\omega\in \mathbf{\Omega}$, has a unique weak solution which is continuous with respect to $(\rho ,\tau )$ and 
\begin{equation}\label{khfgt}
{\|(\alpha ,\beta ,\gamma) \|}_{L^{\infty }([0,1])}\le \iota(\omega)\left({\|({\alpha }_0,{\beta }_0,{\gamma }_0)\|}_{L^{\infty }([0,1])}+\int_0^\tau{\|(g_1,g_2,g_3)\|}_{L^{\infty }([0,1])}ds\right),
\end{equation} 
and
\begin{equation}\label{khfgt1}
{\|(\alpha ,\beta ,\gamma) \|}^2_{L^{2 }(Q^*_\tau)}\le \iota_1(\omega)\left({\|(\alpha_0 ,\beta_0 ,\gamma_0)\|}^2_{L^{2 }(Q^*_\tau)}+\int_0^\tau{\|(g_1,g_2,g_3)\|}^2_{L^{2 }(Q^*_s)}ds\right),
\end{equation}
where $\iota$  and $\iota_1$ are positive functions of $\omega$, $Q^*_\tau=]0,1[\times]0,\tau[$ and the first norm, $\|.\|_{L^\infty([0,1])}$, is taken with respect to $\rho\in [0,1]$. If $h_{ij}(\rho ,\tau )\ (i, j=1, 2, 3)$ and $g_{i}(\rho ,\tau)\ (i= 1, 2, 3)$ are continuously differentiable with respect to $\rho$ and ${\alpha }_0,\ {\beta }_0,\ {\gamma }_0\in \ C^1[0, 1]$, then the   solution of the problem is  continuously differentiable with respect to $\rho$ for almost every $\omega\in \mathbf{\Omega}$ and we have
\begin{eqnarray}\label{nemi}
&{\left\|\left(\dfrac{\partial \alpha}{\partial\rho },\dfrac{\partial \beta}{\partial\rho },\dfrac{\partial \gamma}{\partial\rho }\right)\right\|}_{L^{\infty }(Q^*_T)}&\nonumber\\
 &\le\iota_2(\omega)\left({\|({\alpha }_0',{\beta }_0',\gamma_0')\|}_{L^{\infty }(Q^*_T)}+T{\|({\alpha }_0,{\beta }_0,{\gamma }_0)\|}_{L^{\infty }(Q^*_T)}+T{\|(\dfrac{\partial g_1}{\partial\rho },\dfrac{\partial g_2}{\partial\rho },\dfrac{\partial g_3}{\partial\rho })\|}_{L^{\infty }(Q^*_T)}\right),~~~~~&
\end{eqnarray}
where $\iota_2$ is a positive function of $\omega.$ Also $\alpha$, $\beta$ and $\gamma$ are $\mathcal{F}(\tau)-$adapted.
\end{lem}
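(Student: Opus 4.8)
The plan is to remove the multiplicative noise by a pathwise (Doss--Sussmann type) change of variables and then invoke the deterministic Lemma~\ref{Lemma2}. Concretely, I would write
\begin{equation*}
\alpha = e^{W_1}\tilde\alpha,\qquad \beta = e^{W_2}\tilde\beta,\qquad \gamma = e^{W_3}\tilde\gamma,
\end{equation*}
where $W_j=\sum_{i=0}^m h_i^j(\rho)B_i$, and seek $\tilde\alpha,\tilde\beta,\tilde\gamma$ that, for almost every $\omega$, have finite variation in $\tau$ (no martingale part). Since for fixed $\rho$ the process $\tau\mapsto W_j$ is a continuous martingale with $dW_j=\sum_i h_i^j\,dB_i$ and $d\langle W_j\rangle=\sum_i (h_i^j)^2\,d\tau$ (the $B_i$ being independent), Itô's product rule gives $d\alpha = e^{W_1}d\tilde\alpha + \tilde\alpha e^{W_1}\big(\sum_i h_i^1\,dB_i + \tfrac12\sum_i (h_i^1)^2\,d\tau\big)$, the cross--variation term vanishing because $\tilde\alpha$ is of finite variation.

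Matching this with \eqref{ito12}, the martingale parts $\alpha\,dW_1 = e^{W_1}\tilde\alpha\sum_i h_i^1\,dB_i$ cancel automatically, and after dividing the drift by $e^{W_1}$ and using $\partial_\rho\alpha = e^{W_1}\big(\partial_\rho\tilde\alpha + \tilde\alpha\sum_i \partial_\rho h_i^1\,B_i\big)$, I obtain for $\tilde\alpha$ a deterministic (pathwise) transport equation
\begin{equation*}
\frac{\partial\tilde\alpha}{\partial\tau}+z\frac{\partial\tilde\alpha}{\partial\rho}
= \tilde h_{11}\tilde\alpha + \tilde h_{12}\tilde\beta + \tilde h_{13}\tilde\gamma + \tilde g_1,
\end{equation*}
with $\tilde h_{11}=h_{11}-\tfrac12\sum_i (h_i^1)^2-z\sum_i \partial_\rho h_i^1\,B_i$, $\tilde h_{1k}=h_{1k}e^{W_k-W_1}$ ($k=2,3$) and $\tilde g_1=e^{-W_1}g_1$; the equations for $\tilde\beta,\tilde\gamma$ are obtained symmetrically, and the initial data are unchanged since $W_j(\cdot,0)=0$. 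This is precisely the coupled hyperbolic system of Lemma~\ref{Lemma2} with random but $\omega$--frozen coefficients.

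Next I would verify that the hypotheses of Lemma~\ref{Lemma2} hold pathwise. For almost every $\omega$ the paths $\tau\mapsto B_i(\tau)$ are continuous, hence bounded on $[0,T]$; combined with Assumption~\ref{assumpt}(D) (each $h_i^j$ is $C^2$ with $\partial_\rho h_i^j(0)=h_i^j(1)=0$) and the boundedness and continuity of $z,h_{ij},g_i$, this makes $\tilde h_{ij},\tilde g_i$ bounded continuous on $[0,1]\times[0,T]$ (and $C^1$ in $\rho$ when $h_{ij},g_i$ are), so Lemma~\ref{Lemma2} yields, for a.e.\ $\omega$, a unique continuous weak solution $(\tilde\alpha,\tilde\beta,\tilde\gamma)$ together with its $L^\infty$ and (in the smooth case) $\partial_\rho$ bounds. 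Transforming back, the factors $e^{\pm W_j}$ are bounded by $e^{\|W_j\|_{L^\infty}}$, an a.s.\ finite function of $\omega$; absorbing these into the constants produces the estimates \eqref{khfgt} and \eqref{nemi} with $\iota(\omega),\iota_2(\omega)$ of the stated type. Adaptedness of $\alpha,\beta,\gamma$ follows because $W_j$ is $\mathcal F(\tau)$--adapted and the pathwise construction uses only the data and the Brownian paths up to time $\tau$.

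Finally, for the $L^2$ estimate \eqref{khfgt1} I would run an energy argument on the transformed system: multiply the $\tilde\alpha,\tilde\beta,\tilde\gamma$ equations by $\tilde\alpha,\tilde\beta,\tilde\gamma$, integrate over $\rho\in\,]0,1[$, and integrate the transport term by parts, where the boundary contributions drop out thanks to $z(0,\tau)=z(1,\tau)=0$ and the remaining interior term is controlled by $\|\partial_\rho z\|_{L^\infty}$. Summing, bounding the cross terms by $\|\tilde h_{ij}\|_{L^\infty}$, and applying Gronwall's inequality in $\tau$ gives an $L^2(Q^*_\tau)$ bound of the form \eqref{khfgt1}, which transfers back to $(\alpha,\beta,\gamma)$ up to the $\omega$--dependent factors $e^{\pm W_j}$. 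The main obstacle I anticipate is not the Itô bookkeeping but the careful verification that the transformed coefficients remain admissible for Lemma~\ref{Lemma2}, in particular their behaviour at the endpoints $\rho=0,1$: this is exactly what the boundary conditions $\partial_\rho h_i^j(0)=h_i^j(1)=0$ in Assumption~\ref{assumpt}(D) are designed to guarantee, and keeping track of how the noise enters the transport coefficient $\tilde h_{11}$ (through $z\sum_i\partial_\rho h_i^1\,B_i$) without spoiling the finite--variation and continuity structure is the delicate point.
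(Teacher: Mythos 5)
Your proposal is correct and follows essentially the same route as the paper's own proof: the paper likewise removes the noise with the pathwise exponential transformation $\alpha_j=e^{\sum_{i=0}^m h_i^j(\rho)B_i}\,\tilde\alpha_j$, solves the resulting $\omega$-frozen transport system by Lemma~\ref{Lemma2}, transfers back with the It\^o product rule and the general It\^o formula, and absorbs the a.s.\ finite factors $e^{\pm W_j}$ into $\omega$-dependent constants. The only cosmetic differences are that the paper derives the bounds \eqref{khfgt}--\eqref{khfgt1} by integrating along the characteristics $\dfrac{d\xi}{d\tau}=z(\xi,\tau)$ rather than by your energy argument, proves adaptedness via the localization $A_n=\{|B_k(\tau)|<n+1\}$ instead of asserting it, and makes the uniqueness transfer explicit by applying the same transformation to the difference of two solutions.
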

\noindent{\it Proof } See Appendix.\qed\\

\noindent In the following theorem the existence and uniqueness of the solution of (\ref{YAS0o0o0o})--(\ref{HM3450o0o0o}) are presented. We formally gather the results of  Lemmas \ref{Lemma 1ito} and \ref{Lemma2ito}  in the form of the following theorem.
\begin{thm}\label{mainthm1}
Let Assumption~\ref{assumpt} and initial condition \textup{(\ref{initial1})} be satisfied. Then for almost every $\omega\in \mathbf{\Omega}$ the problem \textup{(\ref{YAS0o0o0o})--(\ref{HM3450o0o0o})} has a unique solution and, for every $T>0$, $\eta\left(\tau\right)\in C\left[0,T\right],\ c,\ w\in C\left({Q^1_T}\right)$  and $p,q,a\in C\left([0,1]\times[0,T]\right)$. Moreover, $c$, $w$, $p$, $q$, $a$ and $\eta$ are $\mathcal{F}(t)$-adapted.
\end{thm}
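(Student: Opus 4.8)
The plan is to reduce the coupled stochastic system to a pathwise family of deterministic coupled equations and then run a Picard-type iteration whose convergence is governed by Lemma~\ref{lem}. First I would fix $\omega$ outside a common null set and apply the exponential substitutions underlying Lemmas~\ref{Lemma 1ito} and~\ref{Lemma2ito}, writing $c=e^{\sum_i h^1_i B_i}\mathfrak{C}$, $w=e^{\sum_i h^2_i B_i}\mathfrak{W}$ and the analogous factorizations for $p,q,a$. By It\^o's formula this removes every stochastic integral in \eqref{YAS0o0o0o}--\eqref{HM3450o0o0o} and turns the system into deterministic parabolic, hyperbolic, elliptic and ODE equations for the transformed unknowns, with coefficients that depend on $\omega$ only through the (continuous) Brownian paths. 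All subsequent analysis is then carried out pathwise, so that the $\omega$-dependent constants $\mu_1'(\omega),s_1'(\omega),\iota(\omega),\dots$ supplied by the two lemmas are finite for a.e.\ $\omega$.

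Next I would decouple the system by successive approximation. Starting from $\eta^{(0)}\equiv\eta_0$, $u^{(0)}\equiv0$ and the initial data frozen in $\tau$, I would define the $(n{+}1)$-st iterate by: (i) solving the two scalar parabolic problems for $c^{(n+1)},w^{(n+1)}$ through Lemma~\ref{Lemma 1ito}, inserting $u^{(n)}(1,\cdot)$, $\eta^{(n)}$ and $p^{(n)},q^{(n)}$ into the coefficients and the source; (ii) solving the hyperbolic system for $(p^{(n+1)},q^{(n+1)},a^{(n+1)})$ through Lemma~\ref{Lemma2ito}, with the $g_{ij}$'s evaluated at $c^{(n+1)},w^{(n+1)}$ and the transport speed built from $u^{(n)}$; (iii) integrating the elliptic identity $\tfrac{1}{\rho^2}\partial_\rho(\rho^2u)=\eta^2 h$ with $u(0,\cdot)=0$ explicitly to obtain $u^{(n+1)}$; and (iv) solving the scalar ODE for $\eta^{(n+1)}$. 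Each sub-step uses only data that is already $\mathcal{F}(\tau)$-adapted, so the whole sequence is adapted and its limit will inherit this property.

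The crux is the convergence estimate. Writing $\mathfrak{M}_n(\tau)$ for the sum of the relevant norms of the successive differences (the space-$L^2$ norms of $\mathfrak{C}^{(n+1)}-\mathfrak{C}^{(n)}$ and $\mathfrak{W}^{(n+1)}-\mathfrak{W}^{(n)}$ controlled through the energy bound \eqref{eqGkkp}, together with the norms of the hyperbolic differences controlled through \eqref{khfgt}--\eqref{khfgt1} and the Lipschitz smoothness of the $g_{ij}$'s from Assumption~\ref{assumpt}), I would chain these estimates so that one time-integration is produced by the hyperbolic characteristics and a second by the parabolic energy inequality, arriving at a double-integral recursion of the form $\mathfrak{M}_{n+1}(t)\le\mathfrak{K}\int_t^T\!\int_0^s\mathfrak{M}_n(\xi)\,d\xi\,ds$ matching the hypothesis of Lemma~\ref{lem}. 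That lemma then yields $\mathfrak{M}_n\to0$ on some $[0,\tilde T]$, so the iterates converge to a solution there; the regularity parts of Lemmas~\ref{Lemma 1ito} and~\ref{Lemma2ito} (in particular the gradient bound \eqref{GH2}, which needs $p>5$, and \eqref{nemi}) give $c,w\in C(\overline{Q^1_T})$, $p,q,a\in C([0,1]\times[0,T])$ and $\eta\in C[0,T]$, and continuity of the exponential factors transfers this back to the original unknowns. I expect the main difficulty to lie precisely here: arranging the interlocking parabolic and hyperbolic estimates so that closing the free-boundary loop through $u$ and $\eta$ does not destroy the iterated-integral structure, and verifying that the constant $\mathfrak{K}$ stays controlled by the a.s.\ finite random constants.

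Finally, to pass from the local interval $[0,\tilde T]$ to an arbitrary horizon $T$, I would invoke the a priori $L^\infty$ bounds \eqref{GH3} and \eqref{khfgt}, which hold on every $[0,T]$ and prevent blow-up of $c,w,p,q,a,\eta$; since these bounds keep $\mathfrak{K}$ uniform, the step length $\tilde T$ does not shrink and $[0,T]$ is covered in finitely many steps. Uniqueness follows by applying the very same difference estimate to two solutions and invoking Lemma~\ref{lem} to force $\mathfrak{M}\equiv0$, while measurability of the common exceptional null set and $\mathcal{F}(\tau)$-adaptedness carry over from the construction.
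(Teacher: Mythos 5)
Your proposal follows essentially the same route the paper intends: the paper states Theorem~\ref{mainthm1} as a formal gathering of Lemmas~\ref{Lemma 1ito} and~\ref{Lemma2ito} (the pathwise exponential substitutions that strip out the stochastic integrals), with the nonlinear coupling handled, as in the proof of Theorem~\ref{AMHZ}, ``in a way similar to the proof of the main theorem of \cite{2.}'' --- i.e.\ precisely the successive-approximation scheme you spell out, and your adaptedness argument mirrors the $A_n$, $\mathcal{F}_n(\tau)$ construction used in the appendix.

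One step, however, would fail as written. You claim the forward Picard iteration can be chained into a recursion of the form $\mathfrak{M}_{n+1}(t)\le\mathfrak{K}\int_t^T\int_0^s\mathfrak{M}_n(\xi)\,d\xi\,ds$ so as to match the hypothesis of Lemma~\ref{lem}. For an initial-value iteration this backward outer integral $\int_t^T$ cannot arise: every estimate you invoke (\eqref{eqGkkp}, \eqref{khfgt}, \eqref{khfgt1}, the characteristics bound) controls the difference at time $t$ by quantities integrated over $[0,t]$, so the recursion one actually obtains is of the forward type $\mathfrak{M}_{n+1}(t)\le\mathfrak{K}\int_0^t\mathfrak{M}_n(s)\,ds$ (possibly iterated). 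The backward--forward double-integral structure of Lemma~\ref{lem} is tailored to the coupling of the forward state system with the \emph{backward} adjoint system, and the paper uses that lemma only there (fixed point of $\mathcal{H}$ in the proof of Theorem~\ref{YKHFKH}), not for Theorem~\ref{mainthm1}. The flaw is not fatal: the forward recursion gives convergence by the same sup-norm contraction on a small interval (indeed the standard Picard bound $\mathfrak{K}^nT^n/n!\to 0$ needs no smallness of $T$ at all), and your continuation argument via the a priori bounds \eqref{GH3} and \eqref{khfgt} then covers any $[0,T]$; the same correction applies to your uniqueness step. So the proof goes through once the recursion is stated with the correct (forward) limits and Lemma~\ref{lem} is replaced by the corresponding forward contraction or Gronwall argument.
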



\section{Optimal control problem}
\label{newopde}

 In this section, we present the following stochastic optimal control problem  (SOCP) in which we add the control variables to the stochastic model to control the concentrations of drug and nutrient on the boundary and inside the tumour to destroy the tumour cells. The optimal control problem is studied to be able to make accurate decisions in order to treat the tumour cells.  We control the concentrations of drug and nutrient using control variables $u_1$, $u_2$, $\overline{c}$ and $\overline{w}$, and for almost every $\omega\in\mathbf{\Omega}$  we minimize
\begin{equation*}\label{OP55557777}
J(u_1,u_2,\overline{c},\overline{w}):=\int_0^1\int_0^T\rho^2(\lambda_1p^2+\lambda_2q^2+(u_1-r_1^*)^2+(u_2-r_2^*)^2)d\tau d\rho+\int_0^T(\overline{c}-r_3^*)^2+(\overline{w}-r_4^*)^2d\tau,
\end{equation*}
\[~~~~(u_1,u_2,\overline{c},\overline{w})\in \mathcal{C}_{ad}\times\mathcal{W}_{ad}\times\mathcal{C}^1_{ad}\times\mathcal{W}^1_{ad},\]
 such that
$J(u^*_1,u^*_2,\overline{c}^*,\overline{w}^*)=\min \Big\{J(u_1,u_2,\overline{c},\overline{w}):(u_1,u_2,\overline{c},\overline{w})\in \mathcal{C}_{ad}\times\mathcal{W}_{ad}\times\mathcal{C}^1_{ad}\times\mathcal{W}^1_{ad}\Big\},$
where 
\begin{equation}\label{YAK*15555}
\mathcal{C}_{ad}:=\Big\{ v\in C([0, 1]\times [0, T])~a.s.: l^{*}_1(\rho,\tau)\leq v(\rho,\tau)\leq l^{**}_1(\rho,\tau)~a.s.\Big\},
\end{equation}
\begin{equation}
 \mathcal{W}_{ad}:=\Big\{ v\in C([0, 1]\times [0, T])~a.s.: l^{*}_2(\rho,\tau) \leq v(\rho,\tau)\leq l^{**}_2(\rho,\tau)~a.s.\Big\},
\end{equation}
\begin{equation}
\mathcal{C}^1_{ad}:=\Big\{ v\in C [0, T]~a.s.:~\dfrac{\partial v}{\partial \tau}\in L^p[0,T],~ p>5, ~v(0)=0,~ l^{*}_3(\tau)\leq v(\tau)\leq l^{**}_3(\tau)~a.s.\Big\},
\end{equation}
\begin{equation}\label{YAK*255555}
\mathcal{W}^1_{ad}:=\Big\{ v\in C [0, T]~a.s.:~\dfrac{\partial v}{\partial\tau}\in L^p[0,T],~ p>5,~v(0)=0,~ l^{*}_4(\tau)\leq v(\tau)\leq l^{**}_4(\tau)~a.s.\Big\},
\end{equation} 
\[r^*_1,r^*_2,l_1^*,l_1^{**},l_2^*,l_2^{**}\in C^{2+\alpha,1+\alpha/2}([0, 1]\times [0, T]) ,\]
\[r_3^*,r_4^*,l_3^{*},l_3^{**},l_4^{*},l_4^{**}\in C^{1+\alpha/2} [0, T],~l_3^{*}(0)=l_3^{**}(0)=l_4^{*}(0)=l_4^{**}(0)=0,~\]
subject to 
\begin{equation}\label{YAS}
d c=D_{1}\dfrac{1}{\rho^2}\dfrac{\partial }{\partial\rho }\left({\rho }^{2}\dfrac{\partial c}{\partial\rho }\right)d\tau+u\left(1, \tau \right)\rho \dfrac{\partial c}{\partial\rho }d\tau-{\eta } ^{2}f\left(c,p,q\right)d\tau+u_1d\tau+c\sum_{i=0}^mh^1_id B_i,~ 
\end{equation}
\[{\rm \ 0}<\rho<1,\ \tau {\rm >}0,\]
\begin{equation}\label{jadid1}
\dfrac{\partial c}{\partial\rho }\left(0,\tau \right){\rm =0, }~c\left({\rm 1,}\tau \right){\rm =}c_1(\tau)+\overline{c}(\tau), ~~\ \tau {\rm >}0,
\end{equation} 
\begin{equation}\label{opb1}
c\left(\rho ,0\right){\rm =}c_0\left(\rho \right),~{\rm  \ 0}\le \rho \le {\rm 1,}
\end{equation}
\begin{equation}\label{YAS676767}
{d w}=D_{2}\dfrac{1}{{\rho }^{2}}\dfrac{\partial }{\partial\rho }\left({\rho }^{2}\dfrac{\partial w}{\partial\rho }\right)d\tau+u\left(1, \tau \right)\rho \dfrac{\partial w}{\partial\rho }d\tau-{\eta }^{2}g\left(w,p,q\right)d\tau+u_2d\tau+w\sum_{i=0}^mh^2_id B_i,~
\end{equation}
\[{\rm \ 0}<\rho<1,\ \tau {\rm >}0,\]
\begin{equation}\label{jadid2}
\dfrac{\partial w}{\partial\rho }\left(0,\tau \right){\rm =0, }~w\left({\rm 1,}\tau \right){\rm =}w_1(\tau)+\overline{w}(\tau), ~~\ \tau {\rm >}0,
\end{equation}
\begin{equation}\label{YAS1}
w\left(\rho ,0\right){\rm =}w_0(\rho),~{\rm \ \ 0}\le \rho \le {\rm 1,}
\end{equation}
\begin{eqnarray}\label{pqd}
&{d p}{\rm +}v\dfrac{\partial p}{\partial\rho }d\tau{\rm =}
{\smallunderbrace{\eta^2(g_{{\rm 11}}\left(c,w,p,q,a\right)p{\rm +}g_{{\rm 12}}\left(c,w,p,q,a\right)q{\rm +}g_{{\rm 13}}\left(c,w,p,q,a\right)a)}_{G_1\left(\eta,c,w,p,q,a\right)}d\tau}+p\displaystyle\sum_{i=0}^mh^3_id B_i{\rm , }~&\\  
&{d q}{\rm +}v\dfrac{\partial q}{\partial\rho }d\tau{\rm =} 
\smallunderbrace{\eta^2(g_{{\rm 21}}\left(c,w,p,q,a\right)p{\rm +}g_{{\rm 22}}\left(c,w,p,q,a\right)q{\rm +}g_{{\rm 23}}\left(c,w,p,q,a\right)a)}_{G_2\left(\eta,c,w,p,q,a\right)}d\tau+q\displaystyle\sum_{i=0}^mh^4_id B_i,&\\  
&{d a}{\rm +}v\dfrac{\partial a}{\partial\rho }d\tau{\rm =}
\smallunderbrace{\eta^2(g_{{\rm 31}}\left(c,w,p,q,a\right)p{\rm +}g_{{\rm 32}}\left(c,w,p,q,a\right)q{\rm +}g_{{\rm 33}}\left(c,w,p,q,a\right)a)}_{G_3\left(\eta,c,w,p,q,a\right)}d\tau+a\displaystyle\sum_{i=0}^mh^5_id B_i,&
\end{eqnarray}
\[{\rm 0}\le \rho \le {\rm 1,\ }\tau {\rm >}0,\]
\begin{equation}\label{YAS2}
~~~~~~~~~~~~~~~~~~~p\left(\rho,0\right)=\ p_0\left(\rho\right),~~~q\left(\rho,0\right)=\ q_0\left(\rho\right),~~~ a\left(\rho,0\right)=a_0\left(\rho\right),~~~~{\rm 0}\le \rho \le {\rm 1,\ }
\end{equation}
\begin{eqnarray}\label{HM345} 
&\dfrac{{\rm 1}}{{\rho }^{{\rm 2}}}\dfrac{\partial }{\partial\rho }\left({\rho }^{{\rm 2}}u\right){\rm =}\eta^2(\tau)h\left(c,w,p,q,a\right),~{\rm 0<}\rho {\rm \leq}1,\ \tau {\rm >}0,~u\left(0,\tau \right){\rm =0,}~\tau> 0,&\nonumber\\  
&{d\eta  (\tau)}=\eta \left(\tau \right)u\left( 1,\tau \right)d\tau+\eta d B_*,~\tau> 0,~~\eta(0)=\eta_0,&\\
&v\left(\rho ,\tau \right){\rm =}u\left(\rho ,\tau \right){\rm -}\rho u\left({\rm 1,}\tau \right),~~{\rm  0}\le \rho \le {\rm 1,\ }\tau {\rm >}0,&\nonumber 
\end{eqnarray}
\[B_*=\sum_{i=0}^mr_iB_i.\]
 Note that (\ref{YAS})--(\ref{HM345})  are obtained from (\ref{YAS0o0o0o})--(\ref{HM3450o0o0o}) by adding  the control variables $u_1$, $u_2$, $\overline{c}$ and $\overline{w}$. 

\begin{thm}\label{??}
Let  Assumption~\ref{assumpt} and initial condition \textup{(\ref{initial1})} be satisfied. Also assume that \\$(c_1,w_1,p_1,q_1,a_1,\eta_1)$ and $(c_2,w_2,p_2,q_2,a_2,\eta_2)$ are the  solutions of  the problem \textup{(\ref{YAS})--(\ref{HM345})} corresponding to $(u^1_1,u^1_2,\overline{c}^1,\overline{w}^1)\in \mathcal{C}_{ad}\times\mathcal{W}_{ad}\times\mathcal{C}^1_{ad}\times\mathcal{W}^1_{ad}$ and $(u^2_1,u^2_2,\overline{c}^2,\overline{w}^2)\in \mathcal{C}_{ad}\times\mathcal{W}_{ad}\times\mathcal{C}^1_{ad}\times\mathcal{W}^1_{ad}$, respectively, where $\mathcal{C}_{ad}\times\mathcal{W}_{ad}\times\mathcal{C}^1_{ad}\times\mathcal{W}^1_{ad}$ is the admissible control set for SOCP. Then, for almost every $\omega\in \mathbf{\Omega}$, there exist positive  $\lambda_0(\omega)$ and $\lambda(\omega)$, which are independent of $(u^1_1,u^1_2,\overline{c}^1,\overline{w}^1)$ and $(u^2_1,u^2_2,\overline{c}^2,\overline{w}^2)$,  such that 
\[{\|(c_1-c_2,w_1-w_2,p_1-p_2,q_1-q_2,a_1-a_2,\eta_1-\eta_2) \|}_{L^{\infty }(Q^*_\tau)} \]
\begin{equation*}
\le \lambda_0(\omega){\|(u^1_1-u^2_1,u^1_2-u^2_2,\overline{c}^1-\overline{c}^2,\overline{w}^1-\overline{w}^2) \|}_{L^{\infty }(Q^*_\tau)},
\end{equation*}
and
\[\|(c_1-c_2,w_1-w_2)\|^2_{L^2(Q^1_\tau)}+\|(p_1-p_2,q_1-q_2,a_1-a_2,\eta_1-\eta_2) \|^2_{L^2(Q^*_\tau)}\]
\[\leq\lambda(\omega){\|(u^1_1-u^2_1,u^1_2-u^2_2,\overline{c}^1-\overline{c}^2,\overline{w}^1-\overline{w}^2) \|}^2_{L^2(Q^*_\tau)},\]
\end{thm}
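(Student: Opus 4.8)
The plan is to subtract the two copies of \eqref{YAS}--\eqref{HM345}, derive the equations solved by the differences, estimate each block (the parabolic pair $c,w$; the hyperbolic triple $p,q,a$; the induced velocity field $u$ and the radius $\eta$) by the already-proved Lemmas \ref{Lemma 1ito} and \ref{Lemma2ito}, and then close the resulting coupled integral inequalities by a Gronwall argument (using the iteration Lemma \ref{lem} on a short interval and patching up to $T$ if needed). Throughout, I would write $\hat c=c_1-c_2$, $\hat w=w_1-w_2$, $\hat p=p_1-p_2$, $\hat q=q_1-q_2$, $\hat a=a_1-a_2$, $\hat\eta=\eta_1-\eta_2$ for the state differences, $\hat u=u_{(1)}-u_{(2)}$ and $\hat v=v_{(1)}-v_{(2)}$ for the differences of the induced velocity fields, and $\delta u_1,\delta u_2,\delta\overline c,\delta\overline w$ for the control differences.

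First I would handle the parabolic pair. Subtracting the $c$-equations, $\hat c$ solves a problem of the form \eqref{ito3} with common (hence vanishing) initial data, boundary value $\delta\overline c$, and a source gathering three pieces: the control increment $\delta u_1$; the nonlinear increment $\eta_1^2 f(c_1,p_1,q_1)-\eta_2^2 f(c_2,p_2,q_2)$; and the transport discrepancy $\big(u_{(1)}(1,\tau)-u_{(2)}(1,\tau)\big)\rho\,\partial_\rho c_2$. By Assumption \ref{assumpt}(A) the rates $K_i$ are $C^2$, hence Lipschitz on the a.s.\ bounded range of the states, so the nonlinear increment is bounded pointwise by $C\big(|\hat c|+|\hat p|+|\hat q|+|\hat\eta|\big)$, while the transport discrepancy is bounded by $|\hat u(1,\cdot)|$ times the a.s.\ gradient bound for $c_2$ furnished by \eqref{GH2}. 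Applying \eqref{eqGkkp} (after splitting off the boundary datum $\delta\overline c$) yields an $L^2$ bound, and \eqref{GH1}--\eqref{GH3} a sup bound, for $\hat c$ in terms of the time integral of the source; the $w$-equation is identical with $\delta u_2$, $\delta\overline w$ and $g$.

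Next I would treat the hyperbolic block. Subtracting the $p,q,a$ equations, $(\hat p,\hat q,\hat a)$ solves a system of the type \eqref{ito12}--\eqref{ito13} with transport speed $v_{(1)}$, whose right-hand side collects the Lipschitz increments of the $\eta^2 g_{ij}$ together with the frozen-coefficient errors $\hat v\,\partial_\rho p_2$, $\hat v\,\partial_\rho q_2$, $\hat v\,\partial_\rho a_2$ arising from transporting the second solution with the first velocity. These are the only terms carrying derivatives of the second solution, and it is exactly the gradient estimate \eqref{nemi} of Lemma \ref{Lemma2ito} that makes $\partial_\rho p_2,\partial_\rho q_2,\partial_\rho a_2$ a.s.\ bounded, so that $\|\hat v\,\partial_\rho p_2\|\le C\|\hat v\|$. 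Then \eqref{khfgt} and \eqref{khfgt1} bound $(\hat p,\hat q,\hat a)$ in $L^\infty$ and $L^2$ by time integrals of $\|\hat c\|$, $\|\hat w\|$, $\|\hat\eta\|$ and $\|\hat v\|$. For the remaining unknowns, integrating the elliptic identity for $u$ in \eqref{HM345} expresses $\hat u$ (hence $\hat v$) pointwise through $\int_0^\rho$ of the Lipschitz increment of $\eta^2 h$, giving $\|\hat u\|_{L^\infty}\le C\big(\|\hat c\|+\|\hat w\|+\|\hat p\|+\|\hat a\|+\|\hat\eta\|\big)$; and It\^o's formula applied to $\hat\eta^2$ in the $\eta$-SDE, followed by Gronwall, bounds $\|\hat\eta\|$ by an integral of $\|\hat u(1,\cdot)\|$. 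Collecting everything produces a closed system of integral inequalities in the six difference norms whose kernels are integrable in $\tau$; I would sum them into a single scalar inequality of iterated-integral form and apply Gronwall (or Lemma \ref{lem} on a small interval, then iterate over finitely many subintervals) to absorb all state-difference norms on the left, leaving only $\|(\delta u_1,\delta u_2,\delta\overline c,\delta\overline w)\|$ on the right. This gives both asserted bounds, with $\lambda_0(\omega),\lambda(\omega)$ carrying the $\omega$-dependence through the pathwise factors $e^{\sum_i h_i^j B_i}$ and the a.s.\ constants of Lemmas \ref{Lemma 1ito}--\ref{Lemma2ito}, but independent of the two controls.

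The hard part will be the bookkeeping of the cross-coupling, and specifically the control of the frozen-coefficient terms $\hat v\,\partial_\rho p_2$ and $\big(u_{(1)}(1,\cdot)-u_{(2)}(1,\cdot)\big)\rho\,\partial_\rho c_2$: these carry the spatial derivatives of the second solution and are the only obstruction to a naive Lipschitz estimate. They are tamed precisely by the a.s.\ derivative bounds \eqref{GH2} and \eqref{nemi}, which is why those estimates were built into the preliminary lemmas. Once those terms are absorbed, the remaining nonlinear increments are routine Lipschitz bounds and the Gronwall/Lemma \ref{lem} closure is standard; the $\omega$-dependence of the constants is immaterial since the estimate is required only for a.e.\ fixed $\omega$.
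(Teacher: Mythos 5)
Your proposal follows essentially the same route as the paper's own proof: subtract the two systems, apply Lemma \ref{Lemma 1ito} (via estimates \eqref{GH1}--\eqref{eqGkkp}) to the parabolic differences and Lemma \ref{Lemma2ito} (via estimates \eqref{khfgt}--\eqref{khfgt1}) to the hyperbolic differences, then close the resulting coupled integral inequalities with the Gronwall inequality, exactly as the paper does. Your explicit bookkeeping of the velocity and radius coupling (the frozen-coefficient terms $\hat{v}\,\partial_\rho p_2$ and $(u_{(1)}(1,\cdot)-u_{(2)}(1,\cdot))\rho\,\partial_\rho c_2$, the elliptic identity for $\hat{u}$, and the $\eta$-SDE) merely spells out steps that the paper's terser argument leaves implicit.
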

where  $Q^*_\tau=]0,1[\times]0,\tau[$.
 
\begin{proof}
Since, $(c_1,w_1,p_1,q_1,a_1,\eta_1)$ and $(c_2,w_2,p_2,q_2,a_2,\eta_2)$ are the  solutions of  the problem \textup{(\ref{YAS})--(\ref{HM345})} corresponding to $(u^1_1,u^1_2,\overline{c}^1,\overline{w}^1)\in \mathcal{C}_{ad}\times\mathcal{W}_{ad}\times\mathcal{C}^1_{ad}\times\mathcal{W}^1_{ad}$ and $(u^2_1,u^2_2,\overline{c}^2,\overline{w}^2)\in \mathcal{C}_{ad}\times\mathcal{W}_{ad}\times\mathcal{C}^1_{ad}\times\mathcal{W}^1_{ad}$, respectively, therefore from Lemma \ref{Lemma 1ito}, the problem 
\[d (c_1-c_2)=D_{1}\dfrac{1}{\rho^2}\dfrac{\partial }{\partial\rho }\left({\rho }^{2}\dfrac{\partial (c_1-c_2)}{\partial\rho }\right)d\tau+u^1\left(1, \tau \right)\rho \dfrac{\partial c_1}{\partial\rho }d\tau-u^2\left(1, \tau \right)\rho \dfrac{\partial c_2}{\partial\rho }d\tau\]
\begin{equation*}
-((\eta_1 ) ^{2}f\left(c_1,p_1,q_1\right)-(\eta_2 ) ^{2}f\left(c_2,p_2,q_2\right))d\tau+(u^1_1-u_1^2)d\tau+(c_1-c_2)\sum_{i=0}^mh^1_id B_i,~ 
\end{equation*}
\[{\rm \ 0}<\rho<1,\ \tau {\rm >}0,\]
\begin{equation*}
\dfrac{\partial (c_1-c_2)}{\partial\rho }\left(0,\tau \right){\rm =0, }~(c_1-c_2)\left({\rm 1,}\tau \right){\rm =}\overline{c}^1(\tau)-\overline{c}^2(\tau), ~~\ \tau {\rm >}0,
\end{equation*} 
\begin{equation*}
(c_1-c_2)\left(\rho ,0\right){\rm =}0,~{\rm  \ 0}\le \rho \le {\rm 1,}
\end{equation*}
where 
\[\dfrac{{\rm 1}}{{\rho }^{{\rm 2}}}\dfrac{\partial }{\partial\rho }\left({\rho }^{{\rm 2}}u^1\right){\rm =}(\eta_1)^2h\left(c_1,w_1,p_1,q_1,a_1\right),~{\rm 0<}\rho {\rm \leq}1,\ \tau {\rm >}0,~u^1\left(0,\tau \right){\rm =0,}~\tau> 0,\]
and
\[\dfrac{{\rm 1}}{{\rho }^{{\rm 2}}}\dfrac{\partial }{\partial\rho }\left({\rho }^{{\rm 2}}u^2\right){\rm =}(\eta_2)^2h\left(c_2,w_2,p_2,q_2,a_2\right),~{\rm 0<}\rho {\rm \leq}1,\ \tau {\rm >}0,~u^2\left(0,\tau \right){\rm =0,}~\tau> 0,\]
for almost every $\omega\in \mathbf{\Omega}$, has a unique continuous solution $c_1-c_2=e^{\sum_{i=0}^mh^1_i(\rho)B_i}\mathfrak{C}$  such that $\mathfrak{C}(|x|,\tau )\in W^{2,1}_p(Q^1_T)$ and
\[{||\mathfrak{C} (|x|,\tau )||}_{W^{2,1}_p(Q^1_T)}\]
\begin{equation}\label{GH1**}
\le \iota_1'(\omega)\left(\|\overline{c}^1-\overline{c}^2 \|_{W^{2,1}_p(Q^1_T)}+\|u^1_1-u_1^2\|_{L^p(Q^*_T)}+||(\eta_1 ) ^{2}f\left(c_1,p_1,q_1\right)-(\eta_2 ) ^{2}f\left(c_2,p_2,q_2\right) ||_{L^p(Q^1_T)}\right)~a.s.,
\end{equation}
where $\iota_1'(\omega)$ is positive and  independent of $(u^1_1,u^1_2,\overline{c}^1,\overline{w}^1)$ and 
$(u^2_1,u^2_2,\overline{c}^2,\overline{w}^2)$. Thus, from \eqref{GH2} and \eqref{GH1**}, 
one can obtain that for almost every $\omega\in \mathbf{\Omega}$ 
\[ 
{||c_1 (|x|,\tau )-c_2(|x|,\tau )||}^p_{L^\infty(\mathfrak{B})}
\]
\begin{equation}
\label{GH1****}
\le\iota_2'(\omega)\left(\|\overline{c}^1-\overline{c}^2 \|^p_{L^\infty(Q^*_\tau)}
+\|u^1_1-u_1^2\|^p_{L^\infty(Q^*_\tau)}+\int_0^\tau||(\eta_1)^{2}
f\left(c_1,p_1,q_1\right)-(\eta_2 ) ^{2}f\left(c_2,p_2,q_2\right)||^p_{L^\infty([0,1])}ds\right),
\end{equation}
where $\iota_2'(\omega)$ is positive. Employing the inequalities presented in \eqref{khfgt} 
and \eqref{GH1****} and the Gronwall inequality, one can conclude that there exists positive 
$\lambda_0(\omega)$, which is independent of $(u^1_1,u^1_2,\overline{c}^1,\overline{w}^1)$ 
and $(u^2_1,u^2_2,\overline{c}^2,\overline{w}^2)$, such that
\[
{\|(c_1-c_2,w_1-w_2,p_1-p_2,q_1-q_2,a_1-a_2,\eta_1-\eta_2) \|}_{L^{\infty }(Q^*_\tau)}
\]
\[ 
\le \lambda_0(\omega){\|(u^1_1-u^2_1,u^1_2-u^2_2,\overline{c}^1-\overline{c}^2,\overline{w}^1-\overline{w}^2) \|}_{L^{\infty }(Q^*_\tau)}~a.s.
\]
On the other hand, applying the inequalities presented in \eqref{eqGkkp} and \eqref{khfgt1} and the Gronwall inequality, we deduce that there exists positive $\lambda(\omega)$, which is independent of $(u^1_1,u^1_2,\overline{c}^1,\overline{w}^1)$ and $(u^2_1,u^2_2,\overline{c}^2,\overline{w}^2)$, such that
\[{\|(c_1-c_2,w_1-w_2)\|^2_{L^2(Q^1_\tau)}+\|(p_1-p_2,q_1-q_2,a_1-a_2,\eta_1-\eta_2) \|}^2_{L^2(Q^*_\tau)}\]
\[
\le\lambda(\omega){\|(u^1_1-u^2_1,u^1_2-u^2_2,\overline{c}^1-\overline{c}^2,\overline{w}^1-\overline{w}^2) \|}^2_{L^2(Q^*_\tau)}~a.s.
\]
The proof is complete.
\end{proof}


\subsection{Adjoint equations}
\label{EUOP7777}

In this subsection,  we present the following stochastic adjoint equations (related to equations \eqref{YAS}--\eqref{HM345}) 
corresponding to the control  $(u_1^*,u_2^*,{\overline{c}}^*,{\overline{w}}^*)$ which are instrumental in presenting the necessary conditions and proving the existence and  uniqueness of optimal control variables for almost every $\omega\in\mathbf{\Omega}$. The following adjoint equation is related to the solution $c$ of \eqref{YAS}--\eqref{opb1},
\[d z_c=-D_{1}\dfrac{1}{\rho^2}\dfrac{\partial }{\partial\rho }\left({\rho }^{2}\dfrac{\partial z_c}{\partial\rho }\right){d\tau }+u^*\left(1, \tau \right)\rho \dfrac{\partial z_c}{\partial\rho }d\tau -z_c\displaystyle\sum_{i=0}^mh^1_id B_i+\smallunderbrace{z_c\sum_{i=0}^m(h_i^1)^2}_{n^c}d\tau\]
\begin{equation}\label{YAS*1e1}
+3u^*\left(1, \tau \right) z_cd\tau -f^*\left(z_\eta,z_c,z_w,z_p,z_q,z_a,\eta^*,c^*,w^*,p^*,q^*,a^*\right)d\tau,~{\rm \ 0}<\rho <1,\ \tau {\rm >}0,
\end{equation}
\begin{equation}
\dfrac{\partial z_c}{\partial\rho }\left(0,\tau \right){\rm =0, }~z_c\left({\rm 1,}\tau \right){\rm =}0, ~~\ \tau {\rm >}0,
\end{equation} 
\begin{equation}
z_c\left(\rho ,T\right){\rm =}0,~{\rm  \ 0}\le \rho \le {\rm 1,}
\end{equation}
where  $(c^*,w^*,p^*,q^*,a^*,\eta^*,u^*,v^*)$ is the solution of  (\ref{YAS})--(\ref{HM345}) corresponding to the control  $(u_1^*,u_2^*,{\overline{c}}^*,{\overline{w}}^*)$ and
\[f^*\left(z_\eta,z_c,z_w,z_p,z_q,z_a,\eta^*,c^*,w^*,p^*,q^*,a^*\right)=-\dfrac{\partial\overbrace{{\eta}^2f\left(c,p,q\right)}^{f_1(\eta,c,p,q)}}{\partial c}\Big|_{(\eta,c,p,q)=(\eta^*,c^*,p^*,q^*)}z_c\]
\[+\dfrac{\partial G_1}{\partial c}(\smallunderbrace{\eta^*,c^*,w^*,p^*,q^*,a^*}_{X^*})z_p+\dfrac{\partial G_2}{\partial c}(X^*)z_q+\dfrac{\partial G_3}{\partial c}(X^*)z_a +\dfrac{\partial {\eta^*}^3 h }{\partial c}({Y^*})z_\eta\]
\[+\dfrac{\partial {\eta^*}^2 h }{\partial c}(\smallunderbrace{c^*,w^*,p^*,q^*,a^*}_{Y^*})\Big(\smallunderbrace{\int_0^1\rho^3(\dfrac{\partial c^*}{\partial \rho}z_c+\dfrac{\partial w^*}{\partial \rho}z_w)d\rho}_{F_1^*}\]
\[+\smallunderbrace{\int_0^\rho (\dfrac{\partial 
p^*}{\partial s}z_p+\dfrac{\partial q^*}{\partial s}z_q+\dfrac{\partial a^*}{\partial s}z_a)ds-\int_0^1(1-\rho^3)(\dfrac{\partial p^*}{\partial \rho}z_p+\dfrac{\partial q^*}{\partial \rho}z_q+\dfrac{\partial a^*}{\partial \rho}z_a)d\rho}_{A_1^*}\Big),\]
 and the adjoint equation  related to $w$, the solution of \eqref{YAS676767}--\eqref{YAS1}, is
\[d z_w=-D_{2}\dfrac{1}{{\rho }^{2}}\dfrac{\partial }{\partial\rho }\left({\rho }^{2}\dfrac{\partial z_w}{\partial\rho }\right)d\tau+u^*\left(1, \tau \right)\rho \dfrac{\partial z_w}{\partial\rho }d\tau-z_w\displaystyle\sum_{i=0}^mh^2_id B_i+\smallunderbrace{z_w\sum_{i=0}^m(h_i^2)^2}_{n^w}d\tau\]
\begin{equation}\label{YIH}
+3u^*\left(1, \tau \right) z_wd\tau-g^*\left(z_\eta,z_c,z_w,z_p,z_q,z_a,\eta^*,c^*,w^*,p^*,q^*,a^*\right)d\tau, ~{\rm \ 0}<\rho <1,\ \tau {\rm >}0,
\end{equation}
\begin{equation}
\dfrac{\partial z_w}{\partial\rho }\left(0,\tau \right){\rm =0, }~z_w\left({\rm 1,}\tau \right){\rm =}0, ~~\ \tau {\rm >}0,
\end{equation}
\begin{equation}\label{YAS1*e}
z_w\left(\rho ,T\right){\rm =}0,~{\rm \ \ 0}\le \rho \le {\rm 1,}
\end{equation}
where
\[g^*\left(z_\eta,z_c,z_w,z_p,z_q,z_a,\eta^*,c^*,w^*,p^*,q^*,a^*\right)\]
\[=-\dfrac{\partial\overbrace{{\eta}^2g\left(w,p,q\right)}^{g_1(\eta,w,p,q)}}{\partial w}\Big|_{(\eta,w,p,q)=(\eta^*,w^*,p^*,q^*)}z_w+\dfrac{\partial G_1}{\partial w}(X^*)z_p\]
\[+\dfrac{\partial G_2}{\partial w}(X^*)z_q+\dfrac{\partial G_3}{\partial w}(X^*)z_a+ \dfrac{\partial {\eta^*}^3 h }{\partial w}(Y^*)z_\eta+\dfrac{\partial {\eta^*}^2 h }{\partial w}(Y^*)\Big(F_1^*+A_1^*\Big),\]
 and the  adjoint equation   related to $p$ is
\begin{eqnarray}\label{pqd*e}
&d z_p{\rm +}v^*\dfrac{\partial z_p}{\partial\rho }d\tau+\dfrac{\partial \rho^2 v^*}{\rho^2\partial\rho }z_pd\tau{\rm =}-z_p\displaystyle\sum_{i=0}^mh^3_id B_i+\smallunderbrace{z_p\sum_{i=0}^m(h_i^3)^2}_{n^p}d\tau&\nonumber\\
&-\Big(-\dfrac{\partial f_1}{\partial p}(\eta^*,c^*,p^*,q^*)z_c-\dfrac{\partial g_1}{\partial p}(\eta^*,w^*,p^*,q^*)z_w+\dfrac{\partial G_1}{\partial p}(X^*)z_p&\nonumber\\
&+\dfrac{\partial G_2}{\partial p}(X^*)z_q+\dfrac{\partial G_3}{\partial p}(X^*)z_a+\dfrac{\partial {\eta^*}^3 h }{\partial p}(Y^*)z_\eta+\dfrac{\partial {\eta^*}^2 h }{\partial p}(Y^*)\Big(F_1^*+A_1^*\Big)+\lambda_1p^*\Big)d\tau,&\\
&{\rm 0}\le \rho \le {\rm 1,\ }\tau {\rm >}0,& \nonumber\\
&z_p\left(\rho,T\right)=0,~~~~{\rm 0}\le \rho \le {\rm 1,\ }&
\end{eqnarray} 
 and the following adjoint equation  is related to $q$ 
\begin{eqnarray}
&d z_q{\rm +}v^*\dfrac{\partial z_q}{\partial\rho }d\tau+\dfrac{\partial \rho^2 v^*}{\rho^2\partial\rho }z_qd\tau=-z_q\displaystyle\sum_{i=0}^mh^4_id B_i+\smallunderbrace{z_q\sum_{i=0}^m(h_i^4)^2}_{n^q}d\tau &\nonumber\\
&-\Big(-\dfrac{\partial f_1}{\partial q}(\eta^*,c^*,p^*,q^*)z_c-\dfrac{\partial g_1}{\partial q}(\eta^*,w^*,p^*,q^*)z_w+\dfrac{\partial G_1}{\partial q}(X^*)z_p&\nonumber\\
&+\dfrac{\partial G_2}{\partial q}(X^*)z_q+\dfrac{\partial G_3}{\partial q}(X^*)z_a+\dfrac{\partial {\eta^*}^3 h }{\partial q}(Y^*)z_\eta+\dfrac{\partial {\eta^*}^2 h }{\partial q}(Y^*)\Big(F_1^*+A_1^*\Big)+\lambda_2q^*\Big)d\tau,&\\
&{\rm 0}\le \rho \le {\rm 1,\ }\tau {\rm >}0,& \nonumber\\
&z_q\left(\rho,T\right)=0,~~~~{\rm 0}\le \rho \le {\rm 1,\ }&
\end{eqnarray} 
 and the  adjoint equation   related to $a$ is
\begin{eqnarray} 
&{d z_a}{\rm +}v^*\dfrac{\partial z_a}{\partial\rho }d\tau+\dfrac{\partial \rho^2 v^*}{\rho^2\partial\rho }z_a d\tau{\rm =}-z_a\displaystyle\sum_{i=0}^mh^5_id B_i+\smallunderbrace{z_a\sum_{i=0}^m(h_i^5)^2}_{n^a}d\tau&\nonumber\\
&-\left(\dfrac{\partial G_1}{\partial a}(X^*)z_p+\dfrac{\partial G_2}{\partial a}(X^*)z_q+\dfrac{\partial G_3}{\partial a}(X^*)z_a+\dfrac{\partial {\eta^*}^3 h}{\partial a}(Y^*) z_\eta+\dfrac{\partial {\eta^*}^2 h}{\partial a}(Y^*) \Big(F_1^*+A_1^*\Big)\right)d\tau,&\\  
&{\rm 0}\le \rho \le {\rm 1,\ }\tau {\rm >}0,& \nonumber
\end{eqnarray}
\begin{equation} 
 z_a\left(\rho,T\right)=0,~~~~{\rm 0}\le \rho \le {\rm 1,\ }
\end{equation}
and the adjoint equation related to $\eta$, the solution of \eqref{HM345},  is as follows
\begin{equation}\label{HM345**ee} 
{dz_\eta (\tau)}=-z_\eta dB_*+\smallunderbrace{z_\eta\sum_{i=0}^m(r_i)^2}_{n^\eta}d\tau -h^*\left(z_\eta,z_c,z_w,z_p,z_q,z_a,\eta^*,c^*,w^*,p^*,q^*,a^*\right){d\tau },~~\tau>0,~~z_\eta(T)=0, 
\end{equation}
where 
\[h^*(z_\eta,z_c,z_w,z_p,z_q,z_a,\eta^*,c^*,w^*,p^*,q^*,a^*)\]
\[=u^*(1,\tau)z_\eta+{\eta^*}^2\int_0^12\rho^2h(Y^*)d\rho z_\eta+{\eta^*}\int_0^12\rho^2 h(Y^*)\Big(F_1^*+A_1^*\Big)d\rho\]
\[+\int_0^1\rho^2\Big(-\dfrac{\partial f_1}{\partial \eta}(\eta^*,c^*,p^*,q^*)z_c-\dfrac{\partial g_1}{\partial \eta}(\eta^*,w^*,p^*,q^*)z_w+\dfrac{\partial G_1}{\partial \eta}(X^*)z_p+\dfrac{\partial G_2}{\partial \eta}(X^*)z_q+\dfrac{\partial G_3}{\partial \eta}(X^*)z_a\Big)d\rho.\] 

\noindent The next theorem  shows that there exists a unique solution  for the problem (\ref{YAS*1e1})--(\ref{HM345**ee}). 

\begin{thm}\label{AMHZ}
Let Assumption~\ref{assumpt} be satisfied. Then for  almost every $\omega\in \mathbf{\Omega}$  the adjoint system  \textup{(\ref{YAS*1e1})--(\ref{HM345**ee})} has a unique solution $\left(z_c,z_w,z_p,z_q,z_a,z_\eta\right)$ and for every $T>0$  we have  
 \[z_c(|x|,\tau),z_w(|x|,\tau)\in C\left(Q^1_T\right),~z_\eta\left(\tau\right)\in C\left[0,T\right],~z_p,z_q,z_a\in C\left([0,1]\times[0,T]\right),\]
 where $Q^1_T:=\Big\{(x,\tau)\in \mathbb{R}^{3}\times \mathbb{R}: |x|< 1,~0<\tau\leq T\Big\}.$ Moreover, $z_c$, $z_w$, $z_p$, $z_q$, $z_a$, and $z_\eta$ are  $\mathcal{F}(t)-$adapted.
\end{thm}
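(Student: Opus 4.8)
The plan is to follow the strategy used for the forward system in Theorem~\ref{mainthm1}, exploiting the decisive structural feature that the adjoint system (\ref{YAS*1e1})--(\ref{HM345**ee}) is \emph{linear} in $(z_c,z_w,z_p,z_q,z_a,z_\eta)$: every coefficient is a partial derivative of $f_1,g_1,G_1,G_2,G_3,h$ evaluated along the already-known optimal trajectory $(c^*,w^*,p^*,q^*,a^*,\eta^*,u^*,v^*)$ supplied by Theorem~\ref{mainthm1}, hence a bounded continuous function for almost every $\omega$. First I would perform the time reversal $\tau\mapsto T-\tau$. This converts the terminal conditions at $\tau=T$ into initial conditions and, crucially, turns the backward parabolic operators $-D_j\frac{1}{\rho^2}\partial_\rho(\rho^2\partial_\rho\,\cdot\,)$ in (\ref{YAS*1e1}) and (\ref{YIH}) into the forward, well-posed parabolic operators covered by Lemma~\ref{Lemma 1ito}; the transport equations for $z_p,z_q,z_a$ and the scalar equation for $z_\eta$ become forward as well. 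I would then note that $v^*(0,\tau)=u^*(0,\tau)=0$ and $v^*(1,\tau)=u^*(1,\tau)-u^*(1,\tau)=0$, so the transport coefficient meets exactly the requirement $z(0,\tau)=z(1,\tau)=0$ of Lemma~\ref{Lemma2ito}, while the additional zeroth-order divergence term multiplying $z_p,z_q,z_a$ is bounded and is absorbed into the coefficients $h_{ij}$ of that lemma.

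Second, I would remove the It\^o integrals by the same exponential substitution used in Lemmas~\ref{Lemma 1ito} and \ref{Lemma2ito}, writing $z_c=e^{-\sum_{i=0}^m h_i^1(\rho)B_i}\mathfrak{Z}_c$ and analogously for the other five unknowns. Applying It\^o's formula, the stochastic differential $-z_c\sum h_i^1\,dB_i$ is cancelled, and the prescribed correction terms $n^c,n^w,n^p,n^q,n^a,n^\eta$ combine with the second-order It\^o term so that only bounded zeroth-order contributions survive. This leaves, for almost every $\omega$, a coupled deterministic parabolic--hyperbolic--ODE system with random but $\omega$-wise bounded coefficients, placing $(\mathfrak{Z}_c,\mathfrak{Z}_w)$ in the scope of Lemma~\ref{Lemma 1ito}, $(\mathfrak{Z}_p,\mathfrak{Z}_q,\mathfrak{Z}_a)$ in that of Lemma~\ref{Lemma2ito}, and $\mathfrak{Z}_\eta$ into a linear ODE.

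Third, I would construct the solution by a Picard iteration on the coupling. Given the $n$-th iterate, I would solve the two decoupled parabolic equations for $(z_c,z_w)$ via Lemma~\ref{Lemma 1ito}, feeding $f^*,g^*$ evaluated at the previous iterate as the source; solve the hyperbolic system for $(z_p,z_q,z_a)$ via Lemma~\ref{Lemma2ito}, with $g_1,g_2,g_3$ taken from the previous iterate; and integrate the linear ODE for $z_\eta$ explicitly. To close the iteration I would estimate successive differences using the bounds \eqref{GH3} and \eqref{eqGkkp} for the parabolic part and \eqref{khfgt}--\eqref{khfgt1} for the hyperbolic part. Since each bound has the form \emph{solution} $\le$ \emph{constant} $\times$ \emph{time-integral of the source}, the differences $\mathfrak{M}_{n+1}$ of consecutive iterates satisfy a recursion of the type $\mathfrak{M}_{n+1}(t)\le\mathfrak{K}\int_t^T\int_0^s\mathfrak{M}_n(\xi)\,d\xi\,ds$ governed by Lemma~\ref{lem}, which forces $\mathfrak{M}_n\to 0$ and hence a unique fixed point on a short interval $[0,\tilde T]$. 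As $\tilde T$ depends only on the bounded coefficients and not on the data, I would step along successive subintervals to reach all of $[0,T]$; applying the same difference estimate to two solutions yields uniqueness. The asserted regularity $z_c,z_w\in C(Q^1_T)$, $z_p,z_q,z_a\in C([0,1]\times[0,T])$ and $z_\eta\in C[0,T]$ then follows from the embedding inside Lemma~\ref{Lemma 1ito} and the continuity statements of Lemma~\ref{Lemma2ito}, and $\mathcal{F}(t)$-adaptedness is obtained exactly as in Theorem~\ref{mainthm1}, from the measurability of the construction through the factors $e^{-\sum h_i^\bullet B_i}$ and the already-adapted optimal trajectory.

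The hard part will be controlling the \emph{nonlocal} couplings $F_1^*$ and $A_1^*$, namely the $\rho$-integrals involving $\partial_\rho c^*,\partial_\rho w^*,\partial_\rho p^*,\partial_\rho q^*,\partial_\rho a^*$ paired with $z_c,z_w,z_p,z_q,z_a$, together with the analogous $\rho$-integrals inside $h^*$, since these appear as source terms in every equation and mix all six unknowns. Their treatment rests on the gradient bounds \eqref{GH2} and \eqref{nemi} and on the boundedness of $\partial_\rho c^*,\partial_\rho w^*$ guaranteed by the choice $p>5$ (cf.\ the Remark following Lemma~\ref{Lemma 1ito}). Because the integrand coefficients are bounded, each nonlocal term is a bounded linear functional of the previous iterate and so enters the Lemma~\ref{lem} recursion without destroying the contraction; verifying that the source fed into Lemma~\ref{Lemma 1ito} and Lemma~\ref{Lemma2ito} at each step is genuinely bounded in the required norm, with no uncontrolled accumulation of derivatives, is the main technical point to check.
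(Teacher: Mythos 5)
Your proposal follows essentially the same route as the paper's proof: the exponential substitution $z_\bullet = e^{\pm\sum_i h_i^\bullet(\rho)B_i}\,\mathfrak{Z}_\bullet$ to cancel the stochastic differentials, the time reversal $t=T-\tau$ converting the final-value problems into initial-boundary value problems, and then reduction to Lemmas~\ref{Lemma 1ito} and \ref{Lemma2ito} together with an iteration/contraction argument (the paper cites the proof of the main theorem of \cite{2.} and Theorem~\ref{mainthm1} for this step, which is exactly the Picard-type scheme governed by Lemma~\ref{lem} that you spell out). Your write-up is more detailed than the paper's — in particular in isolating the nonlocal terms $F_1^*$, $A_1^*$ and the role of the gradient bounds — but the underlying strategy is identical.
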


\begin{proof}
Similar to the proof of  Lemmas \ref{Lemma 1ito} and \ref{Lemma2ito}, we can show that
\begin{equation}\label{adjchv}
\begin{cases}
z_c=e^{\sum_{i=0}^mh^1_i(\rho)B_i}z_{\mathfrak{C}},~~z_w=e^{\sum_{i=0}^mh^2_i(\rho)B_i}z_{\mathfrak{W}},\\
z_p=e^{\sum_{i=0}^mh^3_i(\rho)B_i}z_{\mathfrak{P}},~~z_q=e^{\sum_{i=0}^mh^4_i(\rho)B_i}z_{\mathfrak{Q}},~~z_d=e^{\sum_{i=0}^mh^5_i(\rho)B_i}z_{\mathfrak{D}}.
\end{cases}
\end{equation}
Then, using the change of variable $t = T -\tau$ for the equations (which are final-boundary value problems) obtained for
\[z_{\mathfrak{C}},~~z_{\mathfrak{W}},~~z_{\mathfrak{P}},~~z_{\mathfrak{Q}},~~z_{\mathfrak{D}},\]
 these equations become initial-boundary value problems. Therefore,  employing Lemmas \ref{Lemma 1ito} and \ref{Lemma2ito} and Theorem \ref{mainthm1},  in a way similar  to the proof of the main heorem of \cite{2.}, we can prove this theorem.
\end{proof}

\begin{thm}
\label{???}
Let  Assumption~\ref{assumpt} and initial condition   \textup{(\ref{initial1})} be satisfied and $k>5$. Also, assume that $(z_c^1,z_w^1,z_p^1,z_q^1,z_a^1,z_\eta^1)$ and $(z_c^2,z_w^2,z_p^2,z_q^2,z_a^2,z_\eta^2)$ are the  solutions of the adjoint system \textup{(\ref{YAS*1e1})--(\ref{HM345**ee})}  corresponding to $(u^1_1,u^1_2,\overline{c}^1,\overline{w}^1)\in \mathcal{C}_{ad}\times\mathcal{W}_{ad}\times\mathcal{C}^1_{ad}\times\mathcal{W}^1_{ad}$ and $(u^2_1,u^2_2,\overline{c}^2,\overline{w}^2)\in \mathcal{C}_{ad}\times\mathcal{W}_{ad}\times\mathcal{C}^1_{ad}\times\mathcal{W}^1_{ad}$,  respectively, where $\mathcal{C}_{ad}\times\mathcal{W}_{ad}\times\mathcal{C}^1_{ad}\times\mathcal{W}^1_{ad}$ is the admissible control set for  SOCP. Then, for almost every $\omega\in\mathbf{\Omega}, $ there exists positive   $\lambda^*_2(\omega)$, which is independent of $(u^1_1,u^1_2,\overline{c}^1,\overline{w}^1)$ and $(u^2_1,u^2_2,\overline{c}^2,\overline{w}^2)$, such that 
\[{\left\|(z_c^1-z_c^2,z_w^1-z_w^2,z_p^1-z_p^2,z_q^1-z_q^2,z_a^1-z_a^2,z_\eta^1-z_\eta^2) \right\|}^k_{L^{\infty }([0,1])}\]
\[ \le\lambda_2^*(\omega)\int_\tau^T\int_0^\xi{\|(u^1_1-u^2_1,u^1_2-u^2_2,\overline{c}^1-\overline{c}^2,\overline{w}^1-\overline{w}^2) \|}^k_{L^{\infty }([0,1])}dsd\xi.\]
\end{thm}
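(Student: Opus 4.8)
The plan is to reduce the stochastic backward adjoint system to a pathwise deterministic forward system and then run a continuous-dependence argument parallel to the one in Theorem~\ref{??}, using the a priori bounds of Lemmas~\ref{Lemma 1ito} and~\ref{Lemma2ito}. First I would apply the change of variables \eqref{adjchv} (together with the analogous substitution for $z_\eta$) to both solutions. Since the exponential multipliers $e^{\sum_i h_i^j(\rho)B_i}$ are independent of the control, the differences satisfy $z_c^1-z_c^2=e^{\sum_i h_i^1 B_i}(z_{\mathfrak C}^1-z_{\mathfrak C}^2)$ and likewise for the other components, so it suffices to estimate the deterministic differences $z_{\mathfrak C}^1-z_{\mathfrak C}^2,\ldots,z_{\mathfrak D}^1-z_{\mathfrak D}^2$. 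After the time reversal $t=T-\tau$ the five final-value problems and the terminal-value ODE all become initial-value problems with zero data at $t=0$ (that is, at $\tau=T$), which is exactly the setting of Lemmas~\ref{Lemma 1ito} and~\ref{Lemma2ito}.

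Next I would subtract the two systems and organize the source of each difference equation into three types. Writing $X^{*,1},X^{*,2}$ for the forward states attached to the two controls and $\tilde Z$ for the vector of adjoint differences, the source consists of: (i) bounded coefficients, evaluated at a single forward state and hence bounded by Theorem~\ref{mainthm1}, multiplying the adjoint differences $\tilde Z$; (ii) coefficient differences such as $\frac{\partial G_1}{\partial c}(X^{*,1})-\frac{\partial G_1}{\partial c}(X^{*,2})$, which by the $C^2$-smoothness in Assumption~\ref{assumpt}A are Lipschitz and hence dominated by the forward-state difference $\|X^{*,1}-X^{*,2}\|$ times a fixed adjoint state; and (iii) the differences of the nonlocal terms $F_1^*$ and $A_1^*$, which in addition involve the $\rho$-derivative differences $\partial_\rho c^{*,1}-\partial_\rho c^{*,2}$, $\partial_\rho p^{*,1}-\partial_\rho p^{*,2}$, and so on. The type-(i) terms will feed the Gronwall loop, whereas the type-(ii) and type-(iii) terms are the genuine sources that must be controlled by the control difference.

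To close the argument I would bound the forward-state differences by Theorem~\ref{??}: its $L^2$ estimate gives $\|X^{*,1}-X^{*,2}\|_{L^2(Q^*_\xi)}^2\le\lambda(\omega)\int_0^\xi\|\tilde u\|_{L^2([0,1])}^2\,ds$, a \emph{single} time integral of the control difference, while the gradient differences appearing in the type-(iii) terms are handled by \eqref{GH2} and \eqref{nemi} (this is where $k>5$, in the role of $p>5$, is needed to pass from $W^{2,1}_k$ to $L^\infty$ control of the $\rho$-derivatives). Feeding these into the hyperbolic estimate \eqref{khfgt} and the parabolic estimates \eqref{eqGkkp}, \eqref{GH3} (each of which contributes a further backward time integral $\int_\tau^T$), raising to the power $k$ and summing the components, yields an inequality of the shape
\[\|\tilde Z(\tau)\|_{L^\infty([0,1])}^k\le C(\omega)\int_\tau^T\|\tilde Z(\xi)\|_{L^\infty([0,1])}^k\,d\xi+C(\omega)\int_\tau^T\int_0^\xi\|\tilde u(s)\|_{L^\infty([0,1])}^k\,ds\,d\xi.\]
The composition of the inner source integral (from Theorem~\ref{??}) with the outer adjoint integral (from Lemmas~\ref{Lemma 1ito} and~\ref{Lemma2ito}) is precisely what produces the double integral $\int_\tau^T\int_0^\xi$ in the statement; since the double-integral source is monotone in $\tau$, a backward Gronwall inequality absorbs the first term on the right and leaves the claimed bound, with $\lambda_2^*(\omega)$ independent of the two controls.

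The hard part will be the type-(iii) nonlocal terms. Controlling the differences of $F_1^*$ and $A_1^*$ forces one to estimate differences of spatial derivatives of the forward states paired against adjoint states, and bounding $\partial_\rho c^{*,1}-\partial_\rho c^{*,2}$ and $\partial_\rho p^{*,1}-\partial_\rho p^{*,2}$ uniformly requires the full strength of \eqref{GH2} and \eqref{nemi} together with Theorem~\ref{??}; this is the structural reason the hypothesis $k>5$ is imposed. A secondary difficulty is the exponent bookkeeping: one must keep every contribution consistently in the $L^\infty([0,1])$ and $L^k$ norms so that the single time integral coming from the forward-state estimate and the backward time integral coming from the adjoint estimates combine into the double integral without a norm mismatch, after which the Gronwall step (or, equivalently, the iteration reasoning of Lemma~\ref{lem}) completes the proof.
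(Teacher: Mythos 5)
Your proposal is correct and follows essentially the same route as the paper's proof: the change of variables \eqref{adjchv} combined with the time reversal $t=T-\tau$ to reduce to a forward problem, the estimates of Lemmas~\ref{Lemma 1ito} and~\ref{Lemma2ito} together with the state-difference bounds from the proof of Theorem~\ref{??} (equations \eqref{GH1****} and \eqref{iton7}) to obtain an integral inequality, and Gronwall to absorb the adjoint-difference term, with the double integral $\int_\tau^T\int_0^\xi$ arising exactly as you describe from composing the forward-state estimate with the backward adjoint estimate. The only cosmetic difference is the order of operations (the paper applies Gronwall to the adjoint part first and then substitutes the state-vs-control bound, whereas you substitute first and then apply a backward Gronwall), which yields the same conclusion.
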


\begin{proof}
Using \eqref{adjchv} and the change of variable $t=T-\tau$, the problem (\ref{YAS*1e1})--(\ref{HM345**ee}) becomes an initial-boundary value problem similar to the problem \textup{(\ref{YAS0o0o0o})--(\ref{HM3450o0o0o})}. So, in a way similar to the proof of Theorem \ref{??} (see equation \eqref{GH1****}) and  equation \eqref{iton7},using equations \eqref{HM345} and \eqref{HM345**ee}, and applying Assumption~\ref{assumpt}, we show that the following inequality holds
\[ {||z_c^{1,T} -z_c^{2,T}\|}^k_{L^{\infty }([0,1])}+{||z_w^{1,T} -z_w^{2,T}||}^k_{L^{\infty }([0,1])}\]
\[+\|(z_p^{1,T}-z_p^{2,T},z_q^{1,T}-z_q^{2,T},z_a^{1,T}-z_a^{2,T},z_\eta^{1,T}-z_\eta^{2,T}) \|^k_{L^{\infty }([0,1])}\]
\[\le\kappa_1(\omega)\Big(\int_0^t\|(c^T_1-c^T_2,w^T_1-w^T_2,p^T_1-p^T_2,q^T_1-q^T_2,a^T_1-a^T_2,\eta^T_1-\eta^T_2) \|^k_{L^{\infty }([0,1])}ds\]
 \begin{equation}
 \label{iniquality-adjoint1}
 +\int_0^t{\|(z_c^{1,T}-z_c^{2,T},z_w^{1,T}-z_w^{2,T},z_p^{1,T}-z_p^{2,T},z_q^{1,T}-z_q^{2,T},z_a^{1,T}-z_a^{2,T},z_\eta^{1,T}-z_\eta^{2,T}) \|}^k_{L^{\infty }([0,1])}ds\Big)~a.s.,
\end{equation}
where for each $A\in\{z_c,z_w,z_p,z_q,z_a,z_\eta,c,w,p,q,a,\eta\}$
\[A^T(\cdot,t)=A(\cdot,T-t)=A(\cdot,\tau),\]
 and $\kappa_1(\omega)$ is positive.
Using the Gronwall inequality, the inequality \eqref{iniquality-adjoint1} results in
\[
\|(z_c^{1,T}-z_c^{2,T},z_w^{1,T}-z_w^{2,T},z_p^{1,T}-z_p^{2,T},z_q^{1,T}-z_q^{2,T},z_a^{1,T}-z_a^{2,T},z_\eta^{1,T}-z_\eta^{2,T}) \|^k_{L^{\infty }([0,1])}
\]
\begin{equation*}
\le \kappa_2(\omega)\int_0^t\|(c^T_1-c^T_2,w^T_1-w^T_2,p^T_1-p^T_2,q^T_1-q^T_2,a^T_1-a^T_2,\eta^T_1-\eta^T_2) \|^k_{L^{\infty }([0,1])}ds~a.s.,
\end{equation*}
where $\kappa_2(\omega)$ is positive. Therefore,
\[\|(z_c^{1}-z_c^{2},z_w^{1}-z_w^{2},z_p^{1}-z_p^{2},z_q^{1}-z_q^{2},z_a^{1}-z_a^{2},z_\eta^{1}-z_\eta^{2}) \|^k_{L^{\infty }([0,1])}\]
 \begin{equation}\label{iniquality-adjoint3}
\le \kappa_2(\omega)\int_\tau^T\|(c_1-c_2,w_1-w_2,p_1-p_2,q_1-q_2,a_1-a_2,\eta_1-\eta_2) \|^k_{L^{\infty }([0,1])}ds~a.s.
\end{equation}
Then, by the use of equations \eqref{iton7}, \eqref{GH1****} and \eqref{iniquality-adjoint3}, and Gronwall inequality, we conclude that
\[\|(z_c^{1}-z_c^{2},z_w^{1}-z_w^{2},z_p^{1}-z_p^{2},z_q^{1}-z_q^{2},z_a^{1}-z_a^{2},z_\eta^{1}-z_\eta^{2}) \|^k_{L^{\infty }([0,1])}\]
 \begin{equation}\label{iniquality-adjoint4}
\le\lambda_2^*(\omega)\int_\tau^T\int_0^\xi\|(u^1_1-u^2_1,u^1_2-u^2_2,\overline{c}^1-\overline{c}^2,\overline{w}^1-\overline{w}^2) \|^k_{L^{\infty }}([0,1]) dsd\xi~a.s.,
\end{equation}
where $\lambda_2^*(\omega)$ is positive. 
\end{proof}


\subsection{Existence and uniqueness of stochastic optimal control}
\label{EUOP1111}

In this subsection, we give the necessary conditions together with the existence and  uniqueness of stochastic optimal control. 
   In the next theorem,  the necessary conditions for stochastic optimal control variables of  SOCP are presented.
\begin{thm}\label{ncon} \textbf{\textup{(Necessary Conditions)}}
Let $u_1^*$, $u_2^*$, ${\overline{c}}^*$ and ${\overline{w}}^*$ be the optimal control variables of SOCP. Then, there exists $T>0$ such that for almost every $\omega\in\mathbf{\Omega}$ we have
\begin{equation}\label{YA1**e5e}
u_1^*=\mathcal{F}_1(-z_c+r_1^*),~u_2^*=\mathcal{F}_2(-z_w+r_2^*),
\end{equation}
\begin{equation}\label{YA1**e}
{\overline{c}}^*(\tau)=\mathcal{F}_3\Big(D_1\dfrac{\partial z_c}{\partial\rho}(1,\tau)+r_3^*\Big),~{\overline{w}}^*(\tau)=\mathcal{F}_4\Big(D_2\dfrac{\partial z_w}{\partial\rho}(1,\tau)+r_4^*\Big),
\end{equation}
where  $z_c$ and $z_w$ are the adjoint states corresponding to $(u^*_1,u^*_2,{\overline{c}}^*,{\overline{w}}^*)$ and 
\begin{equation}\label{YA2**}
\mathcal{F}_i(x)=\begin{cases}
l_i^*,~~x<l_i^*,\\
x,~~l_i^*\leq x\leq l^{**}_i,\\
l^{**}_i,~~l^{**}_i< x ,\\
\end{cases}
i=1,2,3,4,
\end{equation}
where $l_i^*$ and $l_i^{**}$ are defined in \textup{(\ref{YAK*15555})--(\ref{YAK*255555})}. 
\end{thm}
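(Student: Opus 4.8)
The plan is to argue pathwise. After the (a.s.) change of variables already used in Lemmas~\ref{Lemma 1ito} and~\ref{Lemma2ito} and in Theorem~\ref{AMHZ}, the state system \eqref{YAS}--\eqref{HM345} and the adjoint system \eqref{YAS*1e1}--\eqref{HM345**ee} become deterministic for almost every fixed $\omega$, and $J$ is evaluated pathwise; so I would fix such an $\omega$ and reduce to ordinary variational calculus. Since each admissible set in \eqref{YAK*15555}--\eqref{YAK*255555} is convex, for any competitor $(u_1,u_2,\overline c,\overline w)$ in the admissible set I would form the convex combination $(u_1^\theta,u_2^\theta,\overline c^{\,\theta},\overline w^{\,\theta})=(u_1^*,u_2^*,\overline c^*,\overline w^*)+\theta\,(u_1-u_1^*,u_2-u_2^*,\overline c-\overline c^*,\overline w-\overline w^*)$ with $\theta\in(0,1]$, which stays admissible. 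Optimality of $(u_1^*,u_2^*,\overline c^*,\overline w^*)$ then gives $\liminf_{\theta\to0^+}\theta^{-1}\big(J(u^\theta)-J(u^*)\big)\ge0$, and the whole task reduces to computing this directional derivative and recognizing it as a projection condition.

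To evaluate the derivative I would differentiate \eqref{YAS}--\eqref{HM345} with respect to $\theta$ to obtain the linearized (sensitivity) system for $(\dot c,\dot w,\dot p,\dot q,\dot a,\dot\eta)$, whose sources are the admissible increments $u_1-u_1^*$ and $u_2-u_2^*$ in the interior equations and $\overline c-\overline c^*$, $\overline w-\overline w^*$ in the boundary data \eqref{jadid1}, \eqref{jadid2}. The Lipschitz and regularity estimates established in the stability result Theorem~\ref{??}, together with Lemmas~\ref{Lemma 1ito} and~\ref{Lemma2ito}, would be used to show that the control-to-state map is G\^ateaux differentiable, that the sensitivity system has a unique solution, and that the difference quotients converge to it, so that the directional derivative of $J$ exists and equals the pairing of the running-cost gradient with the sensitivities.

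The heart of the argument is the adjoint duality identity. I would pair the linearized equations against $z_c,z_w,z_p,z_q,z_a,z_\eta$, integrate by parts in $\rho$ with the spherical weight $\rho^2$ and in $\tau$ over $[0,T]$, and apply It\^o's formula to the products carrying the Brownian terms. The terminal conditions $z_c(\cdot,T)=\cdots=z_\eta(T)=0$ kill the time-endpoint contributions, the homogeneous boundary conditions for the $z$'s kill the interior spatial traces, and the nonlocal terms $F_1^*,A_1^*$ together with the $3u^*(1,\tau)z_c$ and $3u^*(1,\tau)z_w$ corrections in \eqref{YAS*1e1}--\eqref{HM345**ee} (which arise precisely from integrating the weighted convection $u^*(1,\tau)\rho\,\partial_\rho c$ by parts against $\rho^2$) are exactly what force the sensitivity contributions coming from the coupling through $u,v,\eta$ to cancel. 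After this cancellation the directional derivative collapses to
\begin{multline*}
\int_0^1\!\!\int_0^T\rho^2\big[(u_1^*-r_1^*+z_c)(u_1-u_1^*)+(u_2^*-r_2^*+z_w)(u_2-u_2^*)\big]\,d\tau\,d\rho\\
+\int_0^T\big[(\overline c^*-r_3^*-D_1\partial_\rho z_c(1,\tau))(\overline c-\overline c^*)+(\overline w^*-r_4^*-D_2\partial_\rho z_w(1,\tau))(\overline w-\overline w^*)\big]\,d\tau\ \ge\ 0,
\end{multline*}
valid for every admissible competitor, where the traces $D_1\partial_\rho z_c(1,\tau)$ and $D_2\partial_\rho z_w(1,\tau)$ come from the boundary integration by parts against the Dirichlet data controlled by $\overline c,\overline w$.

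Finally I would localize this variational inequality. Because the four increments may be chosen independently and concentrated on arbitrarily small subsets of $]0,1[\times]0,T[$ (respectively of $]0,T[$), the global inequality is equivalent to the pointwise inequalities, e.g. $(u_1^*-r_1^*+z_c)(v-u_1^*)\ge0$ for all $v\in[l_1^*,l_1^{**}]$ a.e. This says $u_1^*$ minimizes the convex quadratic $s\mapsto\tfrac12(s-r_1^*+z_c)^2$ over $[l_1^*,l_1^{**}]$, whose constrained minimizer is the clamp of the unconstrained root $-z_c+r_1^*$; that is exactly $u_1^*=\mathcal F_1(-z_c+r_1^*)$, and likewise for $u_2^*,\overline c^*,\overline w^*$ via $\mathcal F_2,\mathcal F_3,\mathcal F_4$ in \eqref{YA2**}. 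The restriction to some $T>0$ is inherited from the local existence horizon $\tilde T$ of Lemma~\ref{lem} and the lemmas guaranteeing that states and adjoints are simultaneously defined and regular. The step I expect to be hardest is the adjoint duality identity itself: checking that the nonlocal integral terms $F_1^*,A_1^*$ and the $3u^*(1,\tau)$ corrections make all state-sensitivity contributions arising from the elliptic constraint for $u$, the relation $v=u-\rho u(1,\tau)$, and the $\eta$-ODE cancel, so that only the explicit control terms survive.
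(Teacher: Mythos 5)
Your proposal is correct and follows essentially the same route as the paper's proof: perturb the optimal control within the convex admissible set, pass to the linearized (difference-quotient) state system justified by Theorem~\ref{??} and Lemmas~\ref{Lemma 1ito} and~\ref{Lemma2ito}, pair it with the adjoint system \eqref{YAS*1e1}--\eqref{HM345**ee} via It\^o/integration by parts so that the terminal and boundary conditions collapse the identity, and read off the variational inequality \eqref{correneed3}. Your final localization to pointwise projections is just an explicit rendering of the tangent-normal cone argument the paper cites from \cite{25.}, so the two proofs coincide in substance.
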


\begin{proof}
See Appendix.
\end{proof}

\begin{remark}
To prove the existence and uniqueness of the optimal control variables, the first step is proving Lemma \ref{Lemma 1ito}, which is used to study the parabolic equations \eqref{YAS} and \eqref{YAS676767}. In Lemma \ref{Lemma 1ito}, we need the continuity of function $f(\rho ,\tau)$. So the optimal control variables are assumed to be continuous functions to be sure the problems \eqref{YAS} and \eqref{YAS676767} satisfy the assumptions presented in Lemma \ref{Lemma 1ito}.  
\end{remark}
\begin{remark}
For almost every $\omega\in\mathbf{\Omega}$, using the $t$-Anisotropic Embedding Theorem (Theorem 1.4.1 in \cite{21.}) and Lemma 3.3 in \cite{22.}, we conclude that  $z_c$, $z_w$, $\dfrac{\partial z_c}{\partial\rho}(1,\tau)$ and $\dfrac{\partial z_c}{\partial\rho}(1,\tau)$, are  continuous. Since $r_i^*$,  $l_i^*$, $l_i^{**}$ and $\mathcal{F}_i$ $(i=1,2,3,4)$ are continuous functions, therefore $u_1^*$, $u_2^*$, $\overline{c}^*$ and $\overline{w}^*$  are continuous.
\end{remark}
\noindent The following theorem shows the existence and  uniqueness of stochastic optimal control variables for  SOCP.
\begin{thm}\label{YKHFKH}\textbf{\textup{(Existence and Uniqueness)}}
Let Assumption~\ref{assumpt} be satisfied. Then, there exists $T>0$ such that $(u_1^*,u_2^*,{\overline{c}}^*,{\overline{w}}^*)$ is a unique optimal control  for SOCP which satisfies \textup{(\ref{YA1**e5e})--(\ref{YA1**e})} and
\[J(u_1^*,u_2^*,{\overline{c}}^*,{\overline{w}}^*)=\min \Big\{J(u_1,u_2,{\overline{c}},{\overline{w}}):(u_1,u_2,\overline{c},\overline{w})\in \mathcal{C}_{ad}\times\mathcal{W}_{ad}\times\mathcal{C}^1_{ad}\times\mathcal{W}^1_{ad}\Big\}.\]
Moreover  $u_1^*$, $u_2^*$, ${\overline{c}}^*$, ${\overline{w}}^*$ are $\mathcal{F}(\tau)-$adapted.
\end{thm}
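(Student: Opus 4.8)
The plan is to apply the Ekeland variational principle (Theorem \ref{v principle}) together with the stability estimates already established, following the standard template for proving existence and uniqueness of optimal controls for such problems. First I would fix a realization $\omega\in\mathbf{\Omega}$ outside a null set, so that by Theorem \ref{mainthm1} the state system \textup{(\ref{YAS})--(\ref{HM345})} is (a.s.) a well-posed deterministic system and the cost $J$ is (a.s.) a well-defined functional on the admissible set $\mathcal{C}_{ad}\times\mathcal{W}_{ad}\times\mathcal{C}^1_{ad}\times\mathcal{W}^1_{ad}$. The admissible set is a complete metric space under a suitable $L^2$-type metric (the bounds $l_i^*,l_i^{**}$ and the closedness of the constraint sets guarantee completeness), and $J$ is bounded below by $0$ and not identically $+\infty$. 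The key analytic input is the continuity and, more importantly, the \emph{strict convexity--type estimate} of $J$ in the control variables: using the Lipschitz dependence of the states on the controls from Theorem \ref{??} (the $L^2$ estimate with constant $\lambda(\omega)$), I would show that $J$ is lower semicontinuous so that Theorem \ref{v principle} applies.

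Next I would carry out the Ekeland argument: for each $\epsilon>0$ pick an $\epsilon$-minimizer and obtain from Theorem \ref{v principle} a point $(u_1^\epsilon,u_2^\epsilon,\overline{c}^\epsilon,\overline{w}^\epsilon)$ that minimizes the perturbed functional $J(\cdot)+\epsilon^{1/2}\,d(\,\cdot\,,(u_1^\epsilon,\ldots))$. Writing out the first-order optimality (variational inequality) for this perturbed problem and using the adjoint states $z_c,z_w$ governed by \textup{(\ref{YAS*1e1})--(\ref{HM345**ee})} — whose existence is guaranteed by Theorem \ref{AMHZ} — I would identify the approximate controls through the projection formulas $\mathcal{F}_i$ of \textup{(\ref{YA2**})}, exactly as in the necessary conditions of Theorem \ref{ncon}. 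This reduces the optimality system to a fixed-point relation: the map sending a control tuple to the right-hand sides $\mathcal{F}_1(-z_c+r_1^*),\,\mathcal{F}_2(-z_w+r_2^*),\,\mathcal{F}_3(D_1\partial_\rho z_c(1,\tau)+r_3^*),\,\mathcal{F}_4(D_2\partial_\rho z_w(1,\tau)+r_4^*)$, computed from the associated states and adjoints.

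I would then prove this fixed-point map is a contraction for $T>0$ small enough. This is where the estimates of Theorem \ref{??} and especially Theorem \ref{???} (the iterated time-integral bound with the factor $\int_\tau^T\int_0^\xi$) do the work: chaining the state-difference estimate into the adjoint-difference estimate produces a double time integral of the control difference, and Lemma \ref{lem} (the Gronwall-type iteration yielding $M_{n+1}\le\tfrac{\mathfrak{K}T^2}{2}M_n$) shows that for $T<\min\{\sqrt{2/\mathfrak{K}},\,T_0\}$ the associated iteration contracts to zero. Since each $\mathcal{F}_i$ is $1$-Lipschitz (a truncation/projection), the contraction constant is controlled by $\lambda_2^*(\omega)T^2$, which is less than one for small $T$. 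By the Banach fixed-point theorem there is a unique fixed point $(u_1^*,u_2^*,\overline{c}^*,\overline{w}^*)$; passing $\epsilon\to0$ in the Ekeland sequence shows this fixed point is the genuine minimizer, and uniqueness follows from the contraction. The $\mathcal{F}(\tau)$-adaptedness is inherited from the adaptedness of the states and adjoints (Theorems \ref{mainthm1} and \ref{AMHZ}) through the measurable projection formulas.

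The main obstacle I anticipate is establishing the contraction rigorously: one must verify that the composite map \emph{control}~$\to$~\emph{state}~$\to$~\emph{adjoint}~$\to$~\emph{control} is well-defined into the admissible set (in particular that the regularity $\partial_\tau v\in L^p$, $v(0)=0$ demanded of $\overline{c},\overline{w}$ is preserved — this is why the boundary-trace quantities $\partial_\rho z_c(1,\tau)$ must be shown continuous via the $t$-Anisotropic Embedding Theorem and $p>5$), and that the constants $\lambda(\omega)$, $\lambda_2^*(\omega)$ appearing in Theorems \ref{??} and \ref{???} combine to give a genuine contraction factor rather than merely a bound. Care is also needed because the constants depend on $\omega$, so the smallness threshold for $T$ is a priori $\omega$-dependent; I would handle this by noting that the statement only claims existence of \emph{some} $T>0$ for almost every $\omega$, so an $\omega$-measurable threshold suffices.
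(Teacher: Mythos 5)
Your overall architecture coincides with the paper's: Ekeland's variational principle, the approximate projection identities obtained as in Theorem \ref{ncon}, a fixed point of the map $(u_1,u_2,\overline{c},\overline{w})\mapsto\bigl(\mathcal{F}_1(-z_c+r_1^*),\mathcal{F}_2(-z_w+r_2^*),\mathcal{F}_3(D_1\partial_\rho z_c(1,\tau)+r_3^*),\mathcal{F}_4(D_2\partial_\rho z_w(1,\tau)+r_4^*)\bigr)$ whose uniqueness comes from chaining Theorem \ref{??} into Theorem \ref{???} and invoking the iteration Lemma \ref{lem} for small $T$, then passing $\epsilon\to 0$ to identify the fixed point with the minimizer, and finally reading off adaptedness. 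This is exactly the paper's route.

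However, there is one genuine gap at the very first step: your claim that the admissible set $\mathcal{C}_{ad}\times\mathcal{W}_{ad}\times\mathcal{C}^1_{ad}\times\mathcal{W}^1_{ad}$ is ``a complete metric space under a suitable $L^2$-type metric'' is false, and Ekeland's principle (Theorem \ref{v principle}) cannot be applied on it as you propose. The sets \textup{(\ref{YAK*15555})--(\ref{YAK*255555})} consist of \emph{continuous} functions, and for $\mathcal{C}^1_{ad},\mathcal{W}^1_{ad}$ additionally require $\partial_\tau v\in L^p$ with $p>5$ and $v(0)=0$; none of these properties survives $L^1$- or $L^2$-limits, so the admissible set is not closed (hence not complete) in any such metric, while under the uniform metric (which would preserve continuity) the Ekeland perturbation term would no longer match the variational inequality you need. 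The paper resolves precisely this point with a device your proposal omits: using the Lipschitz stability of Theorem \ref{??} it defines a unique extension value $J^*$ on the completion $\overline{\mathcal{C}_{ad}\times\mathcal{W}_{ad}\times\mathcal{C}^1_{ad}\times\mathcal{W}^1_{ad}}$ in the $L^2$-product norm, extends to a functional $\mathfrak{J}$ equal to $+\infty$ off that completion, proves $\mathfrak{J}$ is lower semicontinuous (dominated convergence), and applies Theorem \ref{v principle} to $\mathfrak{J}$ on the complete space. A second, consequential omission follows from this: the Ekeland points $(u_1^\epsilon,u_2^\epsilon,\overline{c}^\epsilon,\overline{w}^\epsilon)$ then live only in the completion, not in the admissible set, so one cannot directly write the state/adjoint systems and first-order conditions for them. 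The paper must approximate them by sequences of genuinely admissible controls, propagate the Cauchy property through the states and adjoints (Theorems \ref{??} and \ref{???}, Lemma \ref{Lemma 1ito}), upgrade to $W^{2,1}_p$-convergence and continuity via the $t$-Anisotropic Embedding Theorem, and only then obtain the approximate identities \eqref{YA1111}--\eqref{YA1***} for the limit objects. Without this limit-taking machinery your ``write out the first-order optimality for the perturbed problem'' step is not justified; the rest of your argument (contraction for small $T$, $\epsilon\to0$, adaptedness) is sound once this is repaired.
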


\begin{proof}
See Appendix.
\end{proof}

\begin{remark}
\label{remcoop}
From Theorem \ref{YKHFKH}, $u_1^*$, $u_2^*$, ${\overline{c}}^*$, ${\overline{w}}^*$ are $\mathcal{F}(\tau)-$adapted and
\[J(u_1^*,u_2^*,{\overline{c}}^*,{\overline{w}}^*)\leq J(u_1,u_2,{\overline{c}},{\overline{w}}),~~\forall(u_1,u_2,{\overline{c}},{\overline{w}})\in \mathcal{C}_{ad}\times\mathcal{W}_{ad}\times\mathcal{C}^1_{ad}\times\mathcal{W}^1_{ad}~~a.s.\]
Therefore,
\[E[J(u_1^*,u_2^*,{\overline{c}}^*,{\overline{w}}^*)]\leq E[J(u_1,u_2,{\overline{c}},{\overline{w}})],~~\forall(u_1,u_2,\overline{c},\overline{w})\in \mathcal{C}_{ad}\times\mathcal{W}_{ad}\times\mathcal{C}^1_{ad}\times\mathcal{W}^1_{ad}.\]
\end{remark}

\begin{thm}\label{concorrelatednoise} 
Let $u_1^*$, $u_2^*$, ${\overline{c}}^*$ and ${\overline{w}}^*$ be the optimal control variables of SOCP in which the Brownian motions are correlated with
\[\operatorname{Corr}[B_i(\tau),B_j(\tau)]=\rho_{ij},~~\rho_{ii}=1,~~i,j\in\{0,1,\ldots,m\}.\]
Then, for almost every $\omega\in\mathbf{\Omega}$, we have
\begin{equation*}
u_1^*=\mathcal{F}_1(-z_c+r_1^*),~u_2^*=\mathcal{F}_2(-z_w+r_2^*),
\end{equation*}
\begin{equation*}
{\overline{c}}^*(\tau)=\mathcal{F}_3\Big(D_1\dfrac{\partial z_c}{\partial\rho}(1,\tau)+r_3^*\Big),~{\overline{w}}^*(\tau)=\mathcal{F}_4\Big(D_2\dfrac{\partial z_w}{\partial\rho}(1,\tau)+r_4^*\Big),
\end{equation*}
where the adjoint states $z_c$, $z_w$, $z_p$, $z_q$, $z_a$ and $z_{\eta}$ are the solutions of equations \eqref{YAS*1e1}--\eqref{HM345**ee} in which 
\begin{equation}\label{ncc}
n^c=z_c\sum_{i=0}^m\sum_{j=0}^mh^1_ih^1_j\rho_{ij},~~n^w=z_w\sum_{i=0}^m\sum_{j=0}^mh^2_ih^2_j\rho_{ij},
\end{equation}
\begin{equation}
n^p=z_p\sum_{i=0}^m\sum_{j=0}^mh^3_ih^3_j\rho_{ij},~~n^q=z_q\sum_{i=0}^m\sum_{j=0}^mh^4_ih^4_j\rho_{ij},~~n^a=z_a\sum_{i=0}^m\sum_{j=0}^mh^5_ih^5_j\rho_{ij},
\end{equation}
\begin{equation}\label{npp}
n^\eta=z_\eta\sum_{i=0}^m\sum_{j=0}^mr_ir_j\rho_{ij},
\end{equation}
and 
\begin{equation*}
\mathcal{F}_i(x)=\begin{cases}
l_i^*,~~x<l_i^*,\\
x,~~l_i^*\leq x\leq l^{**}_i,\\
l^{**}_i,~~l^{**}_i< x ,\\
\end{cases}
i=1,2,3,4,
\end{equation*}
 where $l_i^*$ and $l_i^{**}$ are defined in \textup{(\ref{YAK*15555})--(\ref{YAK*255555})}. 
\end{thm}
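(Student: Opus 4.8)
The cleanest route is to decorrelate the driving noise and then invoke Theorem~\ref{ncon} verbatim. Since $(\rho_{ij})_{i,j=0}^m$ is a correlation matrix it is symmetric and positive semidefinite, so there is a constant matrix $L=(\ell_{ik})$ with $\sum_k\ell_{ik}\ell_{jk}=\rho_{ij}$ together with standard \emph{independent} Brownian motions $\tilde B_0,\dots,\tilde B_m$ on the same filtered space for which $B_i=\sum_k\ell_{ik}\tilde B_k$. Substituting this into the noise terms of \eqref{YAS}--\eqref{HM345} rewrites each stochastic integral in the independent motions: for instance $c\sum_i h_i^1\,dB_i=c\sum_k\tilde h_k^1\,d\tilde B_k$ with $\tilde h_k^1:=\sum_i\ell_{ik}h_i^1$, and analogously $\tilde h_k^2,\dots,\tilde h_k^5$ for $w,p,q,a$, while $B_*=\sum_i r_iB_i=\sum_k\tilde r_k\tilde B_k$ with $\tilde r_k:=\sum_i\ell_{ik}r_i$.

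First I would check that the relabelled system satisfies all hypotheses used in Theorems~\ref{mainthm1}, \ref{AMHZ} and \ref{ncon}. Because the $\ell_{ik}$ are constants and each $h_i^j$ obeys Assumption~\ref{assumpt}~D, every $\tilde h_k^j=\sum_i\ell_{ik}h_i^j$ is again $C^2$-smooth and satisfies $\partial_\rho\tilde h_k^j(0)=\tilde h_k^j(1)=0$, and the remaining parts of Assumption~\ref{assumpt} are untouched. Hence the SOCP driven by the independent motions $\tilde B_k$ with diffusion coefficients $\tilde h_k^j$ falls exactly within the setting already treated, so Theorem~\ref{ncon} applies and yields the feedback forms \eqref{YA1**e5e}--\eqref{YA1**e}, the adjoint states being governed by \eqref{YAS*1e1}--\eqref{HM345**ee} with the \emph{independent-noise} correction terms $n^c=z_c\sum_k(\tilde h_k^1)^2$, and likewise $n^w,n^p,n^q,n^a$, together with $n^\eta=z_\eta\sum_k(\tilde r_k)^2$.

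It then remains only to translate back to the original motions. The martingale parts are invariant, $\sum_k\tilde h_k^1\,d\tilde B_k=\sum_i h_i^1\,dB_i$ (and similarly for the other states and for $B_*$), so the stochastic integrals in \eqref{YAS*1e1}--\eqref{HM345**ee} are literally unchanged; meanwhile the correction terms expand as $\sum_k(\tilde h_k^1)^2=\sum_{i,j}h_i^1 h_j^1\sum_k\ell_{ik}\ell_{jk}=\sum_{i,j}h_i^1 h_j^1\rho_{ij}$, which is precisely \eqref{ncc}, and identically $n^\eta=z_\eta\sum_{i,j}r_ir_j\rho_{ij}$ as in \eqref{npp}. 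Thus the adjoint system expressed in the original correlated noise coincides with \eqref{YAS*1e1}--\eqref{HM345**ee} equipped with \eqref{ncc}--\eqref{npp}, and the feedback formulas carried over from Theorem~\ref{ncon} are exactly the asserted ones. The conceptual reason is that independence entered the derivation only through the quadratic covariation $d\langle B_i,B_j\rangle=\delta_{ij}\,d\tau$ in the It\^o corrections produced by the change of variable \eqref{adjchv}; replacing it by $d\langle B_i,B_j\rangle=\rho_{ij}\,d\tau$ effects the substitution $\delta_{ij}\mapsto\rho_{ij}$ and nothing else.

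The main obstacle is the verification in the second paragraph that the decorrelated data genuinely satisfy Assumption~\ref{assumpt}~D and the regularity required by Lemmas~\ref{Lemma 1ito} and~\ref{Lemma2ito}; once that is secured the remainder is a change of basis. In particular the pointwise projections $\mathcal{F}_i$ defining the controls are inherited unchanged, because $(u_1,u_2,\overline c,\overline w)$ enter \eqref{YAS}--\eqref{HM345} only through the drift terms $u_1\,d\tau$, $u_2\,d\tau$ and the Dirichlet data $\overline c(\tau),\overline w(\tau)$, and enter $J$ only through the quadratic penalties $(u_1-r_1^*)^2,(u_2-r_2^*)^2,(\overline c-r_3^*)^2,(\overline w-r_4^*)^2$, none of which sees the noise correlation.
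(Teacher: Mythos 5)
Your proof is correct, but it takes a genuinely different route from the paper. The paper's own proof is direct: it simply re-runs the proof of Theorem~\ref{ncon} and observes that the only place independence of the $B_i$ entered was through the quadratic covariation appearing in the It\^o product computations \eqref{correneed1}--\eqref{correneed3}, so replacing $dB_i\,dB_j=\delta_{ij}\,d\tau$ by $dB_i\,dB_j=\rho_{ij}\,d\tau$ changes only the correction terms $n^c,\dots,n^\eta$ into \eqref{ncc}--\eqref{npp} and nothing else. You instead decorrelate: factor $\rho_{ij}=\sum_k\ell_{ik}\ell_{jk}$, write $B_i=\sum_k\ell_{ik}\tilde B_k$ with independent $\tilde B_k$, check that the transformed diffusion coefficients $\tilde h^j_k=\sum_i\ell_{ik}h^j_i$ still satisfy Assumption~\ref{assumpt}~D, and then invoke Theorem~\ref{ncon} (together with Theorems~\ref{mainthm1} and~\ref{AMHZ}) as a black box before translating the corrections back via $\sum_k(\tilde h^1_k)^2=\sum_{i,j}h^1_ih^1_j\rho_{ij}$. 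Your reduction buys rigor cheaply: the entire chain of existence, uniqueness and necessary-conditions results transfers verbatim, with no need to inspect where independence was used inside the appendix computations --- which is precisely the step the paper asserts rather than verifies. The price is the standard technical point you gloss over in one line: for a singular correlation matrix the representation $B=L\tilde B$ may require fewer driving motions (the rank of $(\rho_{ij})$) or an enlargement of the probability space, and one should note that the relevant filtration and adaptedness statements survive this change; since the framework allows an arbitrary number $m$ of Brownian motions, this is harmless, but it deserves a sentence. Also reassuring, and worth keeping, is your observation that the exponential change of variables \eqref{adjchv} is invariant under the substitution, since $\sum_k\tilde h^j_k\tilde B_k=\sum_i h^j_iB_i$, so the adjoint system written in the original noise is literally \eqref{YAS*1e1}--\eqref{HM345**ee} with \eqref{ncc}--\eqref{npp}.
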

\begin{proof}
This theorem can be proved similar to the proof of Theorem \ref{ncon}, but in \eqref{correneed1}--\eqref{correneed3} the adjoint system  \eqref{YAS*1e1}--\eqref{HM345**ee} (see Appendix) is considered for $n^c$, $n^w$, $n^p$, $n^q$, $n^a$ and $n^\eta$ defined in \eqref{ncc}--\eqref{npp}.
The adjoint system for SOCP with correlated Brownian motions is changed, because from Figure 4.3 in \cite{brnotioncal}, for correlated Brownian motion with 
\[\operatorname{Corr}[B_i(\tau),B_j(\tau)]=\rho_{ij},~~\rho_{ii}=1,~~i,j\in\{0,1,\ldots,m\},\]
we have $dB_idB_j=\rho_{ij} d\tau$.
\end{proof}


\section{Existence and uniqueness of deterministic optimal control}

 In this section, we study the following  optimal control problem (OCP)  in which we control the concentrations of drug and nutrient using control variables $u_1$, $u_2$, $\overline{c}$ and $\overline{w}$, which are deterministic control variables, and  we minimize
\begin{equation*}
\mathbb{J}(u_1,u_2,\overline{c},\overline{w}):=E\Big[\int_0^1\int_0^T\rho^2(\lambda_1p^2+\lambda_2q^2+(u_1-r_1^*)^2+(u_2-r_2^*)^2)d\tau d\rho+\int_0^T(\overline{c}-r_3^*)^2+(\overline{w}-r_4^*)^2d\tau\Big],
\end{equation*}
\[~~~~(u_1,u_2,\overline{c},\overline{w})\in \mathbb{C}_{ad}\times\mathbb{W}_{ad}\times\mathbb{C}^1_{ad}\times\mathbb{W}^1_{ad},\]
 such that
$\mathbb{J}(u^*_1,u^*_2,\overline{c}^*,\overline{w}^*)=\min \Big\{\mathbb{J}(u_1,u_2,\overline{c},\overline{w}):(u_1,u_2,\overline{c},\overline{w})\in \mathbb{C}_{ad}\times\mathbb{W}_{ad}\times\mathbb{C}^1_{ad}\times\mathbb{W}^1_{ad}\Big\},$
where 
\begin{equation}\label{ocp1}
\mathbb{C}_{ad}:=\Big\{ v\in C([0, 1]\times [0, T]): l^{*}_1(\rho,\tau)\leq v(\rho,\tau)\leq l^{**}_1(\rho,\tau)\Big\},
\end{equation}
\begin{equation}
 \mathbb{W}_{ad}:=\Big\{ v\in C([0, 1]\times [0, T]): l^{*}_2(\rho,\tau) \leq v(\rho,\tau)\leq l^{**}_2(\rho,\tau)\Big\},
\end{equation}
\begin{equation}
\mathbb{C}^1_{ad}:=\Big\{ v\in C [0, T]:~\dfrac{\partial v}{\partial \tau}\in L^p[0,T],~ p>5, ~v(0)=0,~ l^{*}_3(\tau)\leq v(\tau)\leq l^{**}_3(\tau)\Big\},
\end{equation}
\begin{equation}\label{ocp2}
\mathbb{W}^1_{ad}:=\Big\{ v\in C [0, T]:~\dfrac{\partial v}{\partial\tau}\in L^p[0,T],~ p>5,~v(0)=0,~ l^{*}_4(\tau)\leq v(\tau)\leq l^{**}_4(\tau)\Big\},
\end{equation} 
\[r^*_1,r^*_2,l_1^*,l_1^{**},l_2^*,l_2^{**}\in C^{2+\alpha,1+\alpha/2}([0, 1]\times [0, T]) ,\]
\[r_3^*,r_4^*,l_3^{*},l_3^{**},l_4^{*},l_4^{**}\in C^{1+\alpha/2} [0, T],~l_3^{*}(0)=l_3^{**}(0)=l_4^{*}(0)=l_4^{**}(0)=0,~\]
subject to \eqref{YAS}--\eqref{HM345}.\\
In the following theorem we show that this problem has a unique optimal control. We also present the explicit forms of control variables.

\begin{thm}
\label{dexun}
Let Assumption~\ref{assumpt} be satisfied. Then, there exists $T>0$ such that the OCP has a unique optimal control $(u_1^*,u_2^*,{\overline{c}}^*,{\overline{w}}^*)$, which is as follows
\begin{equation*}
u_1^*=\mathcal{F}_1(E[-z_c+r_1^*]),~u_2^*=\mathcal{F}_2(E[-z_w+r_2^*]),
\end{equation*}
\begin{equation*}
{\overline{c}}^*(\tau)=\mathcal{F}_3\Big(E[D_1\dfrac{\partial z_c}{\partial\rho}(1,\tau)+r_3^*]\Big),~{\overline{w}}^*(\tau)=\mathcal{F}_4\Big(E[D_2\dfrac{\partial z_w}{\partial\rho}(1,\tau)+r_4^*]\Big),
\end{equation*}
where  $z_c$ and $z_w$ are the adjoint states corresponding to $(u^*_1,u^*_2,{\overline{c}}^*,{\overline{w}}^*)$ and 
\begin{equation*}
\mathcal{F}_i(x)=\begin{cases}
l_i^*,~~x<l_i^*,\\
x,~~l_i^*\leq x\leq l^{**}_i,\\
l^{**}_i,~~l^{**}_i< x ,\\
\end{cases}
i=1,2,3,4,
\end{equation*}
 where $l_i^*$ and $l_i^{**}$ are defined in \textup{(\ref{ocp1})--(\ref{ocp2})}. 
\end{thm}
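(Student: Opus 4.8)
The plan is to mirror the argument for the stochastic problem (Theorem~\ref{YKHFKH}), replacing almost-sure pointwise optimality by optimality of the expected cost $\mathbb{J}=E[J]$ and carrying the expectation through the whole variational analysis. First I would regard $\mathbb{C}_{ad}\times\mathbb{W}_{ad}\times\mathbb{C}^1_{ad}\times\mathbb{W}^1_{ad}$ as a complete metric space under the metric induced by the $L^2$-norms on $[0,1]\times[0,T]$ and $[0,T]$; completeness holds because each set is closed under uniform limits and the pointwise bounds $l_i^*\le v\le l_i^{**}$ together with the side conditions $v(0)=0$, $\partial_\tau v\in L^p$ are preserved in the limit. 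For each deterministic control, Theorem~\ref{mainthm1} gives an a.s.\ unique state $(c,w,p,q,a,\eta)$, so $\mathbb{J}$ is well defined; since $\mathbb{J}\ge 0$ it is bounded from below, and the Lipschitz dependence of the state on the control from Theorem~\ref{??} (after taking expectations, using that the controls are deterministic) yields finiteness and lower semicontinuity of $\mathbb{J}$.

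Next I would apply the Ekeland variational principle (Theorem~\ref{v principle}) to $\mathbb{J}$: for each $\epsilon>0$ there is an $\epsilon$-optimal control at which a perturbed functional is minimized, producing a variational inequality. I would then compute the Gateaux derivative of $\mathbb{J}$ in an admissible direction $(\delta u_1,\delta u_2,\delta\overline{c},\delta\overline{w})$. Because $\mathbb{J}=E[J]$ and the controls are deterministic, this derivative equals the expectation of the pathwise derivative of $J$; introducing the adjoint system (\ref{YAS*1e1})--(\ref{HM345**ee}) and integrating by parts in $\rho$ and $\tau$ exactly as in the proof of Theorem~\ref{ncon}, the linearized-state contributions cancel against the adjoint dynamics, leaving a reduced gradient expressed through $z_c$, $z_w$ and their boundary traces. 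Taking expectations, the first-order condition for the $u_1$-component reads
\[
\int_0^1\int_0^T \rho^2\big(u_1^*-r_1^*+E[z_c]\big)\,(v_1-u_1^*)\, d\tau\, d\rho\ \ge\ 0,\qquad \forall\, v_1\in\mathbb{C}_{ad},
\]
and analogously for $u_2^*$, $\overline{c}^*$, $\overline{w}^*$. Since $E[z_c]$ and $r_1^*$ are deterministic, the pointwise projection onto $[l_1^*,l_1^{**}]$ gives $u_1^*=\mathcal{F}_1(E[-z_c+r_1^*])$, and similarly the remaining stated formulas with $E[\cdot]$ inside $\mathcal{F}_i$.

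For existence and uniqueness I would set up the fixed-point map $\Phi$ that sends a deterministic control to the state via (\ref{YAS})--(\ref{HM345}), then to the adjoint states via (\ref{YAS*1e1})--(\ref{HM345**ee}), and finally to the new deterministic control defined by the above projection formulas. Composing the estimate of Theorem~\ref{??} (control $\mapsto$ state) with that of Theorem~\ref{???} (control $\mapsto$ adjoint) and taking expectations, the double time integral appearing in Theorem~\ref{???} supplies a contraction factor that can be made less than one by shrinking $T$, precisely as in the iteration of Lemma~\ref{lem}. Thus $\Phi$ is a contraction on the admissible set for small $T$, and the Banach fixed-point theorem furnishes a unique fixed point; the Ekeland approximate minimizers satisfy the corresponding perturbed conditions and converge to this fixed point as $\epsilon\to0$, identifying it as the unique optimal control and confirming the explicit form.

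The main obstacle I anticipate is the passage of the expectation through both the Gateaux differentiation and the Lipschitz estimates. The constants $\lambda_0(\omega),\lambda(\omega)$ in Theorem~\ref{??} and $\lambda_2^*(\omega)$ in Theorem~\ref{???} are $\omega$-dependent, so I must verify that they (or the relevant state and adjoint differences) are integrable, in order to invoke dominated convergence for the derivative of $\mathbb{J}$ and to turn the a.s.\ Lipschitz bounds into a genuine contraction of the expected quantities. Handling this integrability, together with checking that the deterministic projection formulas keep $\overline{c}^*,\overline{w}^*$ within $\mathbb{C}^1_{ad},\mathbb{W}^1_{ad}$ (in particular the regularity $\partial_\tau v\in L^p$ and the compatibility $v(0)=0$), is where the bulk of the technical care will be required.
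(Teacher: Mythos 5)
Your proposal follows essentially the same route as the paper: the paper's proof derives the variational inequality exactly as in the proof of Theorem \ref{ncon}, takes expectations, uses the fact that deterministic controls give deterministic perturbation directions to move $E[\cdot]$ inside the inequality, applies tangent-normal cone techniques to obtain the projection formulas, and then concludes existence and uniqueness ``in a way similar to the proof of Theorem \ref{YKHFKH}'' --- i.e.\ the Ekeland-plus-fixed-point contraction argument you describe. The integrability of the $\omega$-dependent constants $\lambda_0(\omega),\lambda(\omega),\lambda_2^*(\omega)$ that you flag as the main technical obstacle is a genuine point of care, but it is left equally unaddressed in the paper's own argument, so it does not distinguish your approach from theirs.
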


\begin{proof}
Similar to the Proof of Theorem \ref{ncon}, we can arrive at
\[E\Bigg[\int_0^1\int_0^T\rho^2\Big(( z_c+u_1^*-r_1^*)u_1^0+( z_w+u_2^*-r_2^*)u_2^0\Big)d\rho d\tau+\int_0^T\left(\overline{c}^*-r_3^*- D_1\dfrac{\partial z_c}{\partial\rho}(1,\tau)\right){\overline{c}}_0(\tau)d\tau\]
\begin{equation*}
+\int_0^T\left(\overline{w}^*-r_4^*- D_2\dfrac{\partial z_w}{\partial\rho}(1,\tau)\right){\overline{w}}_0(\tau)d\tau\Bigg] \geq 0.
\end{equation*}
Since $u_1,u_2,\overline{c},\overline{w}$ are deterministic, $u_1^0,u^0_2,\overline{c}_0,\overline{w}_0$ are deterministic. Therefore
\[
\int_0^1\int_0^T\rho^2\Big(( E[z_c-r_1^*]+u_1^*)u_1^0+(E[ z_w-r_2^*]+u_2^*)u_2^0\Big)d\rho d\tau+\int_0^T\left(\overline{c}^*-E[r_3^*+ D_1\dfrac{\partial z_c}{\partial\rho}(1,\tau)]\right){\overline{c}}_0(\tau)d\tau
\]
\begin{equation*}
+\int_0^T\left(\overline{w}^*-E[r_4^*+ D_2\dfrac{\partial z_w}{\partial\rho}(1,\tau)]\right){\overline{w}}_0(\tau)d\tau \geq 0.
\end{equation*}
Therefore, using tangent-normal cone techniques (see subsection 5.3 in \cite{25.}) and  (\ref{ocp1})--(\ref{ocp2}), one can conclude that
\[
u_1^*=\mathcal{F}_1(E[-z_c+r_1^*]),~u_2^*=\mathcal{F}_2(E[-z_w+r_2^*]),~
{\overline{c}}^*=\mathcal{F}_3\left(E[D_1\dfrac{\partial z_c}{\partial\rho}(1,\tau)+r_3^*]\right),
\]
\[
~{\overline{w}}^*=\mathcal{F}_4\left(E[D_2\dfrac{\partial z_w}{\partial\rho}(1,\tau)+r_4^*]\right).
\]
Then, in a way similar to the proof of Theorem \ref{YKHFKH}, one can conclude that the obtained optimal control is unique.
\end{proof}


\section{Numerical experiments}

In this section, we have solved a stochastic optimal control problem to illustrate the effects of random terms and optimal control variables on the evolution of tumour cells.\\

\begin{example}
In this example, we have solved SOCP for  
\[r_1^*=-4,~r_2^*=5,~r_3^*=-4,~r^*_4=5,~l_1^*=-5,l_1^{**}=0,~l_2^*=0,l_2^{**}=10,\]
\[~l_3^*=-5,l_3^{**}=0,~l_4^*=0,l_4^{**}=10,\]
\[D_1=D_2=\dfrac{1}{10},\]
\[K_B(c)=15c,~K_Q(c)=15-c,K_A(c)=13-c,~K_P(c)=c,~K_D(c)=13-c,~\]
\[G_1(w)=7w,~G_2(w)=3w,\]
\[c_0(\rho)=12,~w_0(\rho)=14, p_0(\rho)=2\rho+e^{\rho^2}+1,~q_0(\rho)=2\rho^2+2,~\eta_0=\dfrac{1}{10}.\]
The problem is solved using a combination of euler-maruyama and collocation methods. In this method,
the problem on the space domain is discretized using Legendre--Gauss--Lobatto nodes, which results in
a SDE. Then, the SDE is solved using the Euler--Maruyama method. In Figure~\ref{fig2a}, for $N=19$ and $r_i=h_i^j=0.5 ~(i\in\{0,1\},j\in\{1,\ldots,5\})$, the effect of stochastic terms on the dynamic  and density of alive tumour cells is illustrated. 
In Figures~\ref{fig2b} and \ref{fig2c}, for $N=13$ and $r_i=h_i^j=0.1 ~(i\in\{0,1\},j\in\{1,\ldots,5\})$, the effects of optimal control variables on the density of tumour cells are illustrated. It is shown that the density of alive cells and proliferative cells decreases under the effects of optimal control. In Figure~\ref{fig2d}, for $N=13$ and $r_i=h_i^j=0.1 ~(i\in\{0,1\},j\in\{1,\ldots,5\})$, sample paths of 
\[\dfrac{\eta^C(\tau)-\eta(\tau)}{\eta_0}\]
are shown, where
$\eta$ and $\eta^C$ are the radii of tumour without control and under the effect of control, respectively. It is illustrated that the optimal control variables result in a relative decrease of tumour radius (Figure~\ref{fig2d}).
\begin{figure}[!t]
\begin{center}
\hspace{-1cm}\includegraphics[width=18cm]{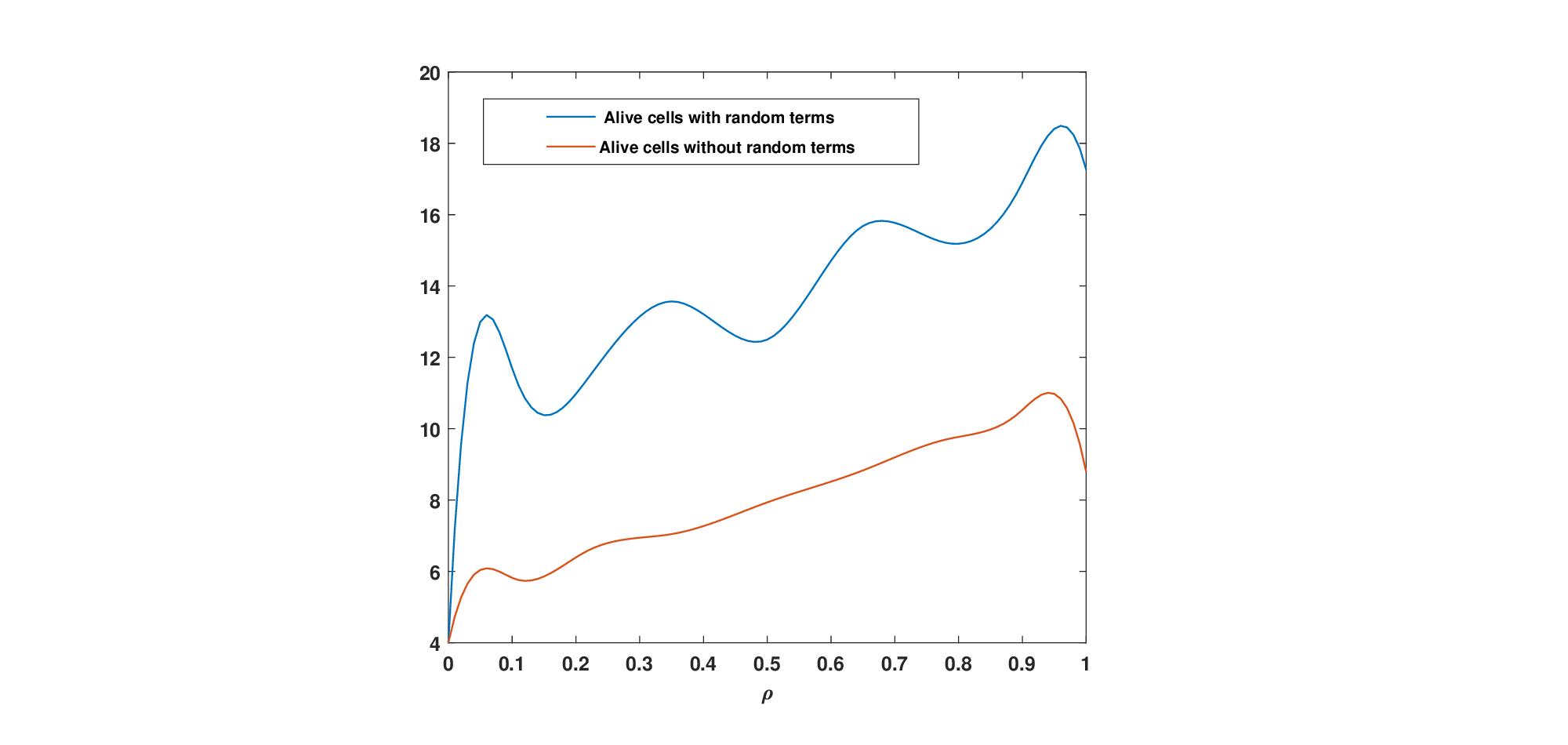}
\vspace{-1cm}\caption{\small{Density of alive cells without stochastic terms  and under the effects of stochastic terms after 1 unit of time. }}\label{fig2a}
\vspace{-.5cm}
\end{center}
\end{figure}
\begin{figure}[!t]
\begin{center}
\hspace{-1.88cm}\includegraphics[width=17.6cm]{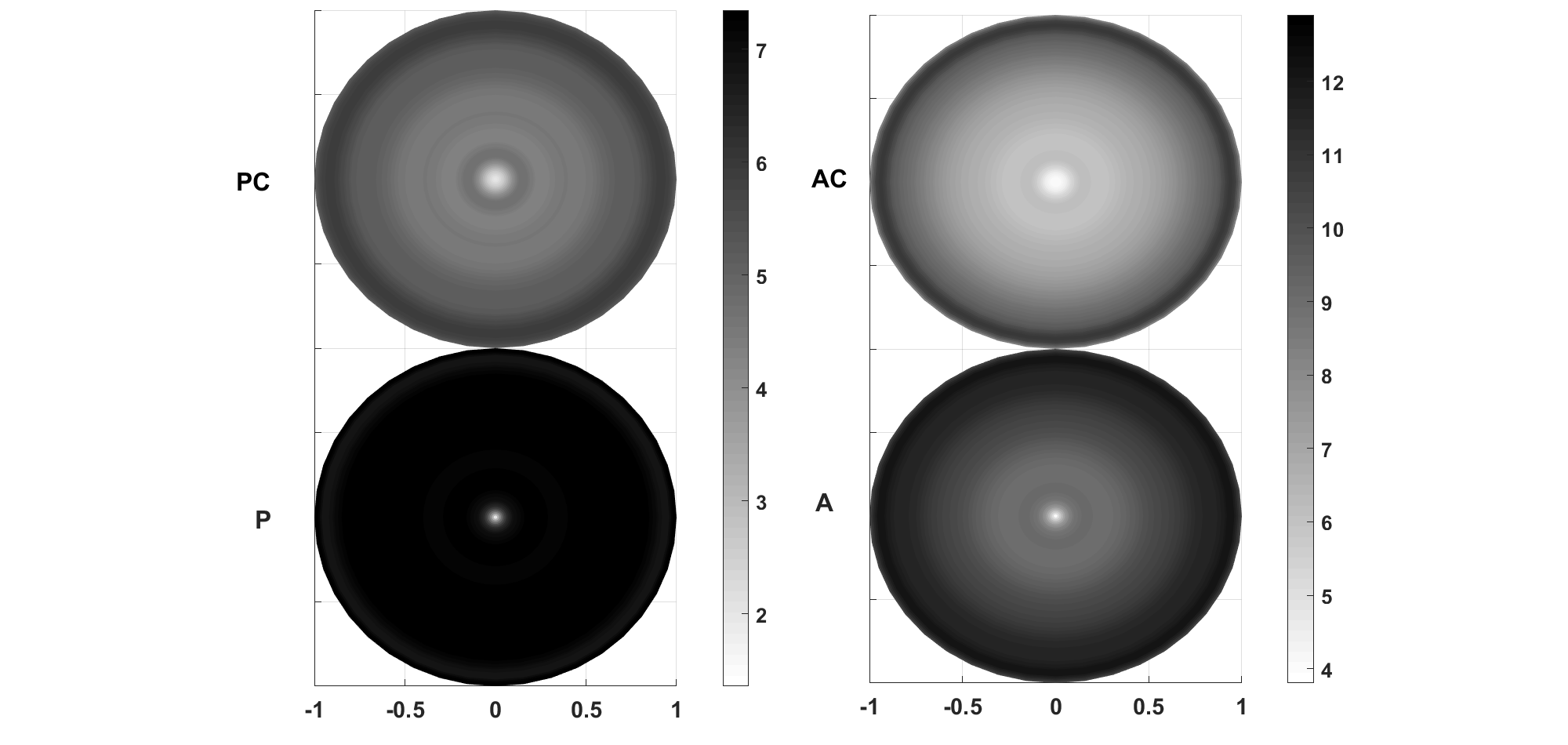}
\vspace{0cm}\caption{\small{Density of proliferative cells  with control (\textbf{PC}) and without control (\textbf{P}) and density of alive cells  with control (\textbf{AC}) and without control (\textbf{A}) on the cross section of tumour passing through the center of the tumour after 1 unit of time.}}\label{fig2b}
\vspace{-.5cm}
\end{center}
\end{figure}
\begin{figure}[!t]
\begin{center}
\hspace{-1.3cm}\includegraphics[width=17.6cm]{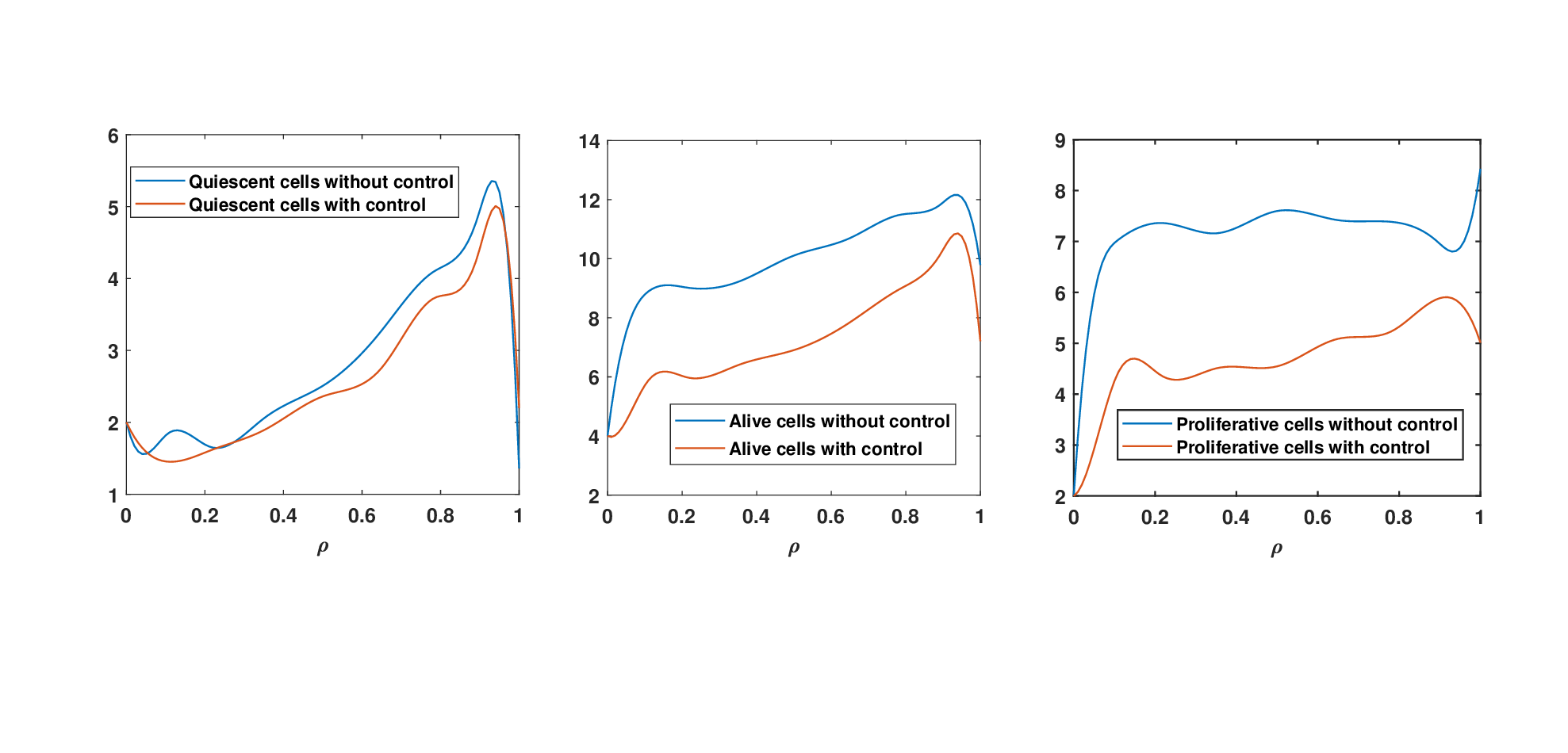}
\vspace{-2.5cm}\caption{\small{ Density of cells without control and under the effects of optimal control after 1 unit of time.}}\label{fig2c}
\vspace{-.5cm}
\end{center}
\end{figure}
\begin{figure}[!t]
\centering
\includegraphics[scale=0.6]{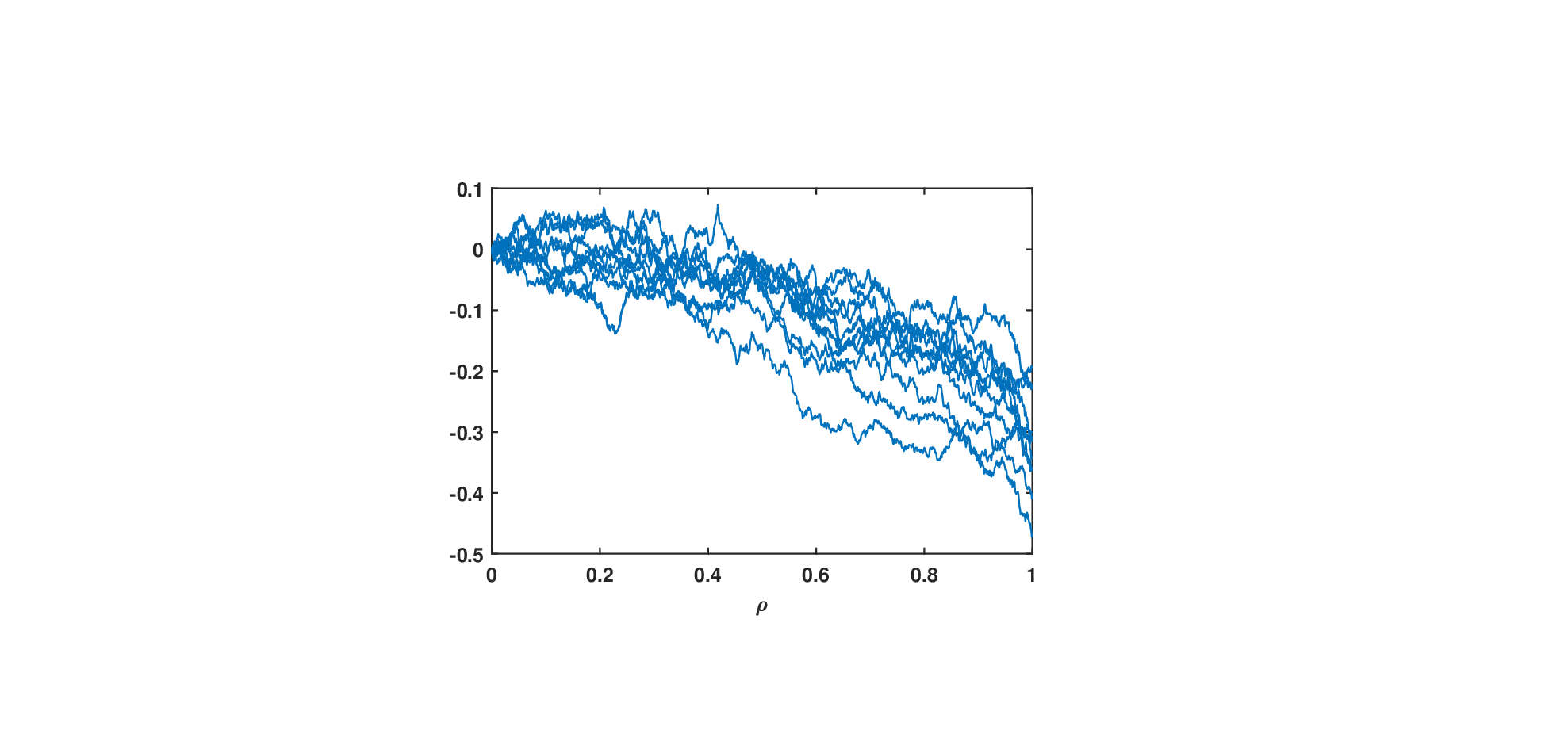}
\caption{Sample paths of relative decrease of tumour radius.}\label{fig2d}
\end{figure}
\end{example}


\section{Conclusions}
\label{cremar}

In this paper, we have studied an optimal control problem for a stochastic model 
of tumour growth. Real environments are stochastic and, in biological systems,
birth rates, competition coefficients, carrying capacities and other parameters 
characterizing natural biological systems exhibit random fluctuations 
\cite{Stability and Complexity in Model Ecosystems}. 
Even weak noise can result in unexpected qualitative shifts
in the dynamics of nonlinear systems \cite{Bashkirtseva}. Therefore, we have added 
the stochastic terms to the deterministic model to work with a more reliable model. 
By providing an example, we have shown the effect of random terms and optimal control 
variables on the density of alive tumour cells. In Figure~\ref{fig2a}, 
it is illustrated that the random terms can increase the density of alive cells 
and affect the dynamic of system. In Figures~\ref{fig2b} and \ref{fig2c}, 
we have shown how the control variables decrease the density of alive cells 
and control the uncertainties caused by the stochastic terms. 
We have also studied the problem when the noises are correlated and we have presented 
the explicit forms of the optimal control variables in terms of adjoint 
states for correlated noises.


\subsection*{Acknowledgements}

The authors are very grateful to the editor and the referees for their valuable comments
and suggestions which improved the original submission of this paper.


\subsection*{Disclosure statement}

No potential conflict of interest was reported by the authors.


\subsection*{Funding}

Torres was supported by the Portuguese Foundation for Science
and Technology (FCT -- Fundação para a Ciência e a Tecnologia) 
through CIDMA, reference UIDB/04106/2020.


\subsection*{Data availability statement}

No datasets were generated or analysed during the current study.


\subsection*{ORCID}

Delfim F. M. Torres (\url{https://orcid.org/0000-0001-8641-2505})



\subsection*{Appendix}

\setcounter{equation}{0}
\renewcommand{\theequation}{A\arabic{equation}}

\begin{appendix}
	
We provide here the proof of some lemmas and theorems.
	
\begin{proof}[Proof of Lemma \ref{Lemma 1ito}] 
First, we consider the following problem
\[d \mathfrak{C} =\dfrac{1}{{\rho }^2}\dfrac{\partial }{\partial\rho }\left({\rho }^2\dfrac{\partial \mathfrak{C} }{\partial\rho }\right)d\tau+\Big(\smallunderbrace{\varphi \left(\tau \right)\rho+2 \sum_{i=0}^m B_i\left(\tau \right) \dfrac{\partial {h_i(\rho)} }{\partial\rho }}_{\varphi_1}\Big)\dfrac{\partial \mathfrak{C} }{\partial\rho }d\tau+{\psi_1}\mathfrak{C}d\tau\]
\begin{equation}\label{ito2}
+e^{-\sum_{i=0}^mh_i(\rho)B_i}f(\rho,\tau)d\tau,~~ 0<\rho <1,~ 0< \tau \leq T,
\end{equation} 
\[ \dfrac{\partial \mathfrak{C}}{\partial\rho}(0,\tau)=0,~~\mathfrak{C} \left(1,\tau \right)=e^{-\sum_{i=0}^m h_i(1)B_i}\overline{c},~~ 0\leq\tau\leq T,~\mathfrak{C} \left(\rho ,0\right)=c_0(\rho),~~ 0\leq\rho \leq 1,\]
where
\[\psi_1=\psi\left(\rho ,\tau \right)+\varphi\sum_{i=0}^m B_i\left(\tau \right)\rho \dfrac{\partial {h_i(\rho)} }{\partial\rho }+\sum_{i=0}^m B_i\left(\tau \right)\dfrac{2}{\rho} \dfrac{\partial {h_i(\rho)} }{\partial\rho }-\dfrac{1}{2}\sum_{i=0}^m(h_i(\rho))^2\]
\[+e^{-\sum_{i=0}^mh_i(\rho)B_i}\left(\dfrac{\partial^2 e^{\sum_{i=0}^mh_i(\rho)B_i}}{\partial \rho^2}\right).\]
Since  $B_i(\tau)$ $(i=0,1,\ldots,m)$ is continuous for almost every $\omega\in \mathbf{\Omega}$, using Lemma \ref{Lemma 1}, for almost every $\omega\in \mathbf{\Omega}$, the problem \eqref{ito2} has a unique solution such that $\mathfrak{C}(|x|,\tau )\in W^{2,1}_p(Q^1_T)$. Now, we define $A_n$ and $\mathcal{F}_n(\tau)$ as follows
\begin{equation}\label{adap}
A_n=\{\omega\in\Omega: |B_k(\tau)|<n+1,k=0,1,\ldots,m\},~~
\mathcal{F}_n(\tau)=\{A\cap A_n:~A\in\mathcal{F}(\tau)\}.
\end{equation}
Clearly $B_k(\tau)\Big|_{ A_n}$ is a measurable function with respect to  $( A_n,\mathcal{F}_n(\tau))$.
 Therefore, from Lemma \ref{Lemma 1} there exists  positive  $\mu_n$ 
 such that
\begin{equation}\label{ito4}
 {||\mathfrak{C}||}_{W^{2,1}_p(Q_T)}\le  \mu_n\left(\left|\overline{c }\right|+{\left|\left|{c}_0\left(|x|\right)\right|\right|}_{D_p\left(\mathfrak{B}\right)}+||e^{-\sum_{k=0}^mh_k(x)B_k}f(|x|,\tau) ||_{L^p(Q^1_T)}\right),~~ \forall\omega\in A_n,
\end{equation}
where $\mu_n$ depends on $T,~p, ~\|\varphi_1\|_{L^\infty(Q^1_T)}$ and $\|\psi_1\|_{L^\infty(Q^1_T)}$. Moreover, if $f\in C(\overline{Q^1_T})$, then there exists positive $\mu^n_0$ such that
\begin{equation}\label{ito6}
{||\mathfrak{C} ||}_{L^{\infty }(Q_T)}\le e^{\mu^n_0T}\Big(\max\{|\overline{c}|+{\left|\left|c_0(|x|,\tau )\right|\right|}_{L^{\infty }(Q^1_T)}\}+T||f(|x|,\tau) ||_{L^\infty(Q^1_T)}\Big),~~\forall\omega\in A_n.
\end{equation} 
It is also clear that the solution of \eqref{ito2}, $\mathfrak{C}$, is a function of $B_k$ $(k=0,\ldots,m)$. From \eqref{ito2} and \eqref{ito6},  $\mathfrak{C}\Big|_{A_n}=\mathfrak{G}(B_0\Big|_{A_n},\ldots,B_m\Big|_{A_n})$ is a continuous function with respect to $B_k\Big|_{A_n}$ $(k=0,\ldots,m)$. Therefore, from Theorem 14.3.1 in \cite{measure},  $\mathfrak{C}\Big|_{A_n}$ is $\mathcal{F}_n(\tau)-$adapted. Thus $\mathfrak{C}\Big|_{A_n}$ is $\mathcal{F}(\tau)-$adapted. Also,   $\mathfrak{C}$ the solution of \eqref{ito2} can be defined as follows
\[\mathfrak{C}=\mathfrak{C}\Big|_{A_n},~~ \omega\in A_n, ~~n\in\mathbb{N}_0.\]
Therefore, for every borel set $B$, $\mathfrak{C}^{-1}(B)=\displaystyle\cup_{n=0}^\infty \mathfrak{C}\Big|_{A_n}^{-1}(B)\in\mathcal{F}(\tau).$ So, $\mathfrak{C}$ is  $\mathcal{F}(\tau)-$adapted.  \\
Also, from Lemma \ref{Lemma 1} we can conclude that there exists a positive constant $d$   such that
\begin{equation}\label{ito5}
{||\partial_x^\beta \mathfrak{C} (|x|,\tau ) ||}_{L^{\infty }(Q^1_T)}\le s'_1(\omega)T^{\frac{1}{2}-\frac{5}{2p}} {||\mathfrak{C} (|x|,\tau )||}_{W^{2,1}_p(Q^1_T)}+s'_2(\omega)\delta^{-1-\frac{5}{p}}||\mathfrak{C}(|x|,\tau )||_{L^p(Q^1_T)}~a.s.,
\end{equation} 
where $|\beta|=1$ and $\delta=\min\{d,\sqrt{T}\}$ and  $s'_1(\omega)$, $s'_2(\omega)$ are positive  and depend on $p$.  Moreover, from \eqref{ito2}, it can be seen that $\mathfrak{C}(|x|,\tau)$ is the solution of 
\[d \mathfrak{C} =\Delta \mathfrak{C} d\tau+\Big(\smallunderbrace{\varphi \left(\tau \right)+2 \sum_{i=0}^m B_i\left(\tau \right)\dfrac{1}{\rho} \dfrac{\partial {h_i(\rho)} }{\partial\rho }}_{\varphi^*_1(\rho,\tau)}\Big)x.\nabla \mathfrak{C} d\tau+\psi_1(\rho,\tau)\mathfrak{C}d\tau+e^{-\sum_{i=0}^mh_i(\rho)B_i}f(\rho,\tau)d\tau,\]
\[~~ 0<\rho=|x| <1,~ 0< \tau \leq T,\] 
\[ ~~\mathfrak{C} \left(1,\tau \right)=e^{-\sum_{i=0}^m h_i(1)B_i}\overline{c},~~ 0\leq\tau\leq T,~\mathfrak{C} \left(|x| ,0\right)=c_0(|x|),~~ 0\leq|x| \leq 1.\]
Since $\mathfrak{C}$ is $\mathcal{F}(\tau)-$adapted, applying the Ito product rule (Corollary 4.6.3 in \cite{steve345}), we have
\[d(\mathfrak{C}^2)=2\mathfrak{C}d\mathfrak{C}=2\mathfrak{C}\Delta \mathfrak{C}d\tau+2\mathfrak{C}  {\varphi_1^*}x.\nabla \mathfrak{C}d\tau+2{\psi_1}\mathfrak{C}^2d\tau+e^{-\sum_{i=0}^mh_i(\rho)B_i}f(|x|,\tau)\mathfrak{C}d\tau,\]
\begin{equation*}
~ 0<\rho=|x| <1,~ 0< \tau \leq T,
\end{equation*} 
\[ ~~\mathfrak{C} \left(1,\tau \right)=e^{-\sum_{i=0}^m h_i(1)B_i}\overline{c},~~ 0\leq\tau\leq T,~\mathfrak{C} \left(|x| ,0\right)=c_0(|x|),~~ 0\leq|x| \leq 1.\]
Therefore, if  $\overline{c}=c_0(|x|)=0$,  we arrive at 
\[\int_{\mathfrak{B}} (\mathfrak{C}(|x|,\tau))^2dx\]
\[\leq\lambda^*(\omega)\left(\int_0^\tau\int_{\mathfrak{B}}\mathfrak{C}^2dxds+\int_0^\tau\int_{\mathfrak{B}} ( e^{-2\sum_{i=0}^mh_i(\rho)B_i}(f(|x|,s))^2)dxds\right)~a.s.,\]
where  
 $\lambda^*(\omega)$  depends on $T,~p, ~\|\varphi\|_{L^\infty(Q^*_T)},~\|\psi\|_{L^\infty(Q^*_T)},~\left\|\dfrac{\partial h_i}{\partial \rho}\right\|_{L^\infty(Q^*_T)},~\left\|\dfrac{\partial^2 h_i}{\partial \rho^2}\right\|_{L^\infty(Q^*_T)}$ and $\|h_i\|_{L^\infty(Q^*_T)}$. Employing the Gronwall inequality, we have
  \begin{equation}\label{eqG}
  \int_{\mathfrak{B}} (\mathfrak{C}(|x|,\tau))^2dx\leq e^{\lambda_1^*(\omega)\tau}\int_0^\tau \int_{\mathfrak{B}}  (f(|x|,s))^2dxds~a.s.,\end{equation}
where  
 $\lambda_1^*(\omega)$ depends on $T,~p, ~\|\varphi\|_{L^\infty(Q^*_T)},~\|\psi\|_{L^\infty(Q^*_T)},~\left\|\dfrac{\partial h_i}{\partial \rho}\right\|_{L^\infty(Q^*_T)},~\left\|\dfrac{\partial^2 h_i}{\partial \rho^2}\right\|_{L^\infty(Q^*_T)}$ and $\|h_i\|_{L^\infty(Q^*_T)}$.

\noindent From the Ito product rule (Corollary 4.6.3 in \cite{steve345}) and general Ito formula (Theorem 4.6 in \cite{oksendal}), we have
\[d(e^{\sum_{i=0}^mh_i(\rho)B_i}\mathfrak{C})=d(e^{\sum_{i=0}^mh_i(\rho)B_i})\mathfrak{C}+e^{\sum_{i=0}^mh_i(\rho)B_i}d\mathfrak{C}\]
\begin{equation}
\label{ito1**99900}
=\mathfrak{C}e^{\sum_{i=0}^mh_i(\rho)B_i}\sum_{i=0}^mh_i(\rho)dB_i+\dfrac{\mathfrak{C}}{2}e^{\sum_{i=0}^mh_i(\rho)B_i}\sum_{i=0}^m(h_i(\rho))^2dt+e^{\sum_{i=0}^mh_i(\rho)B_i}d\mathfrak{C}.\end{equation}
Thus, from \eqref{ito2} and \eqref{ito1**99900}, we can obtain
\[d(\smallunderbrace{e^{\sum_{i=0}^mh_i(\rho)B_i}\mathfrak{C}}_{\mathfrak{C}^1})=\dfrac{1}{{\rho }^2}\dfrac{\partial }{\partial\rho }\left({\rho }^2\dfrac{\partial \mathfrak{C}^1 }{\partial\rho }\right)d\tau\]
\begin{equation*}
+\varphi \left(\tau \right)\rho \dfrac{\partial \mathfrak{C}^1 }{\partial\rho }d\tau+\psi\left(\rho ,\tau \right)\mathfrak{C}^1d\tau+f(\rho,\tau)d\tau+\mathfrak{C}^1\sum_{i=0}^mh_i(\rho)dB_i.
\end{equation*}
Hence,   we conclude that, for almost every $\omega\in \mathbf{\Omega}$, \eqref{ito3} has a solution $\mathfrak{C}^1$ such that  the inequalities \eqref{GH1}--\eqref{eqGkkp} can be obtained from \eqref{ito4}--\eqref{eqG}.
  Now, we want to show that the problem \eqref{ito3} has a unique solution. Let $\mathfrak{C}^1$ and $\mathfrak{C}^2$ be the solutions of \eqref{ito3}. Thus, $u=\mathfrak{C}^1-\mathfrak{C}^2$ is the solution of
\begin{equation*}
{d u }=\dfrac{1}{{\rho }^2}\dfrac{\partial }{\partial\rho }\left({\rho }^2\dfrac{\partial u }{\partial\rho }\right)d\tau+\varphi \left(\tau \right)\rho \dfrac{\partial u }{\partial\rho }d\tau+\psi\left(\rho ,\tau \right)ud\tau+u\sum_{i=0}^mh_idB_i ,~~ 0<\rho <1,~ 0< \tau \leq T,
\end{equation*} 
\[ \dfrac{\partial u}{\partial\rho}(0,\tau)=0,~~u \left(1,\tau \right)=0,~~ 0\leq\tau\leq T,~u \left(\rho ,0\right)=0,~~ 0\leq\rho \leq 1.\]
Therefore, using the Ito product rule and general Ito formula, one can conclude that
 \[u_1=e^{-\sum_{i=0}^mh_i(\rho)B_i}u\] is the solution of
\[d u_1 =\dfrac{1}{{\rho }^2}\dfrac{\partial }{\partial\rho }\left({\rho }^2\dfrac{\partial u_1 }{\partial\rho }\right)d\tau+\Big(\varphi \left(\tau \right)\rho+2 \sum_{i=0}^m B_i\left(\tau \right) \dfrac{\partial {h_i(\rho)} }{\partial\rho }\Big)\dfrac{\partial u_1 }{\partial\rho }d\tau\]
\[+\Bigg(\psi\left(\rho ,\tau \right)+\varphi\sum_{i=0}^m B_i\left(\tau \right)\rho \dfrac{\partial {h_i(\rho)} }{\partial\rho }+\sum_{i=0}^m B_i\left(\tau \right)\dfrac{2}{\rho} \dfrac{\partial {h_i(\rho)} }{\partial\rho }-\dfrac{1}{2}\sum_{i=0}^m(h_i(\rho))^2\]
\[+e^{-\sum_{i=0}^mh_i(\rho)B_i}\left(\dfrac{\partial^2 e^{\sum_{i=0}^mh_i(\rho)B_i}}{\partial \rho^2}\right)\Bigg)u_1d\tau,\]
\[ 0<\rho <1,~ 0< \tau \leq T,\]
\[ \dfrac{\partial u_1}{\partial\rho}(0,\tau)=0,~~u_1 \left(1,\tau \right)=0,~~ 0\leq\tau\leq T,~u_1 \left(\rho ,0\right)=0,~~ 0\leq\rho \leq 1.\]
Finally, using Lemma \ref{Lemma 1},  one can deduce that $u_1=0$ for almost every $\omega\in  \mathbf{\Omega}$, which results in the uniqueness of the solution of the problem \eqref{ito3}.
\end{proof}

\begin{proof}[Proof of Lemma \ref{Lemma2ito}]  
Let, for almost every $\omega\in \mathbf{\Omega}$, $({\alpha}_1,{\alpha}_2,\alpha_3)$ be the solution of 
\[{d\alpha_1 }+z\left(\rho ,\tau \right)\dfrac{\partial\alpha_1 }{\partial\rho }d\tau=(h_{11}\left(\rho ,\tau \right)-\dfrac{1}{2}\sum_{i=0}^m(h^1_i(\rho))^2-z\left(\rho ,\tau \right)\sum_{i=0}^m\dfrac{\partial h^1_i}{\partial \rho}(\rho)B_i)\alpha_1d\tau\]
\begin{equation}
\label{iton1}
+\Big( e^{\sum_{i=0}^mh^2_i(\rho)B_i-h^1_i(\rho)B_i}h_{12}\left(\rho ,\tau \right)\alpha_2 +e^{\sum_{i=0}^mh^3_i(\rho)B_i-h^1_i(\rho)B_i}h_{13}\left(\rho ,\tau \right)\alpha_3+e^{-\sum_{i=0}^mh^1_i(\rho)B_i}g_1(\rho,\tau)\Big)d\tau,
\end{equation}
\[{d\alpha_2 }+z\left(\rho ,\tau \right)\dfrac{\partial\alpha_2 }{\partial\rho }d\tau=(h_{22}\left(\rho ,\tau \right)-\dfrac{1}{2}\sum_{i=0}^m(h^2_i(\rho))^2-z\left(\rho ,\tau \right)\sum_{i=0}^m\dfrac{\partial h^2_i}{\partial \rho}(\rho)B_i)\alpha_2d\tau\]
\begin{equation}
+\Big( e^{\sum_{i=0}^mh^1_i(\rho)B_i-h^2_i(\rho)B_i}h_{21}\left(\rho ,\tau \right)\alpha_1 +e^{\sum_{i=0}^mh^3_i(\rho)B_i-h^2_i(\rho)B_i}h_{23}\left(\rho ,\tau \right)\alpha_3+e^{-\sum_{i=0}^mh^2_i(\rho)B_i}g_2(\rho,\tau)\Big)d\tau,
\end{equation}
\[{d\alpha_3 }+z\left(\rho ,\tau \right)\dfrac{\partial\alpha_3 }{\partial\rho }d\tau=(h_{33}\left(\rho ,\tau \right)-\dfrac{1}{2}\sum_{i=0}^m(h^3_i(\rho))^2-z\left(\rho ,\tau \right)\sum_{i=0}^m\dfrac{\partial h^3_i}{\partial \rho}(\rho)B_i)\alpha_3d\tau\]
\begin{equation}
\label{iton2}
+\Big( e^{\sum_{i=0}^mh^1_i(\rho)B_i-h^3_i(\rho)B_i}h_{31}\left(\rho ,\tau \right)\alpha_1 +e^{\sum_{i=0}^mh^2_i(\rho)B_i-h^3_i(\rho)B_i}h_{32}\left(\rho ,\tau \right)\alpha_2+e^{-\sum_{i=0}^mh^3_i(\rho)B_i}g_3(\rho,\tau)\Big)d\tau,
\end{equation}
\[~0\le \rho \le 1, \ 0<\tau \le T, \]
\[\alpha_1 \left(\rho ,0\right)=\alpha_0\left(\rho \right),\ \alpha_2 \left(\rho ,0\right)=\beta_0\left(\rho \right),\ \alpha_3 \left(\rho ,0\right)=\gamma_0\left(\rho \right),~\ 0\le \rho \le 1. \] 
In a way similar to the proof of Lemma \ref{Lemma 1ito}, by defining $A_n$ and $\mathcal{F}_n(\tau)$ (see \eqref{adap}) we can show that $\alpha_1$, $\alpha_2$ and $\alpha_3$ are $\mathcal{F}(\tau)-$adapted. By considering the characteristic equation 
\[\dfrac{d\xi(\rho,\tau)}{d \tau}=z(\xi(\rho,\tau),\tau),~~\xi(\rho,0)=\rho,\]
for equations \eqref{iton1}--\eqref{iton2}, one can deduce that there exists positive $\iota_3(\omega)$ and $\iota_4(\omega)$ such that
\begin{equation}\label{iton7}
{\|(\alpha_1 ,\alpha_2 ,\alpha_3) \|}_{L^{\infty }([0,1])}\le \iota_3(\omega)\left({\|({\alpha }_0,{\beta}_0,{\gamma }_0)\|}_{L^{\infty }([0,1])}+\int_0^\tau{\|(g_1,g_2,g_3)\|}_{L^{\infty }([0,1])}ds\right)~a.s.
\end{equation}
and
\begin{equation}\label{iton8}
{\|(\alpha_1 ,\alpha_2 ,\alpha_3) \|}^2_{L^{2 }(Q^*_\tau)}\le \iota_4(\omega)\left({\|({\alpha }_0,{\beta}_0,{\gamma }_0)\|}^2_{L^{2 }(Q^*_\tau)}+\int_0^\tau{\|(g_1,g_2,g_3)\|}^2_{L^{2 }(Q^*_s)}ds\right)~a.s.
\end{equation}
 From the Ito product rule and general Ito formula, we have
\[d(\smallunderbrace{\overbrace{e^{\sum_{i=0}^mh^j_i(\rho)B_i}}^{W'_j}\alpha_j}_{\alpha^1_j})=d(W'_j)\alpha_j+W'_jd\alpha_j\]
\begin{equation*}
=\alpha_jW'_j\sum_{i=0}^mh^j_i(\rho)dB_i+\dfrac{\alpha_j}{2}W'_j\sum_{i=0}^m(h^j_i(\rho))^2dt+W'_jd\alpha_j,~~~j=1,2,3.
\end{equation*}
Therefore, we have
\begin{equation}\label{rrrr12}
{d\alpha_1^1 }+z\left(\rho ,\tau \right)\dfrac{\partial\alpha^1_1 }{\partial\rho }d\tau=\Big(h_{11}\left(\rho ,\tau \right)\alpha^1_1 +h_{12}\left(\rho ,\tau \right)\alpha_2^1 +h_{13}\left(\rho ,\tau \right)\alpha^1_3+g_1(\rho,\tau)\Big)d\tau+\alpha^1_1\sum_{i=0}^mh^1_i(\rho)dB_i, 
\end{equation}
\begin{equation}
{d\alpha^1_2 }+z\left(\rho ,\tau \right)\dfrac{\partial\alpha^1_2 }{\partial\rho }d\tau=\Big(h_{21}\left(\rho ,\tau \right)\alpha^1_1 +h_{22}\left(\rho ,\tau \right)\alpha^1_2 +h_{23}\left(\rho ,\tau \right)\alpha^1_3+g_2(\rho,\tau)\Big)d\tau+\alpha^1_2\sum_{i=0}^mh^2_i(\rho)dB_i,
\end{equation} 
\begin{equation}\label{rrrr122}
{d\alpha^1_3 }+z\left(\rho ,\tau \right)\dfrac{\partial\alpha^1_3 }{\partial\rho }d\tau=\Big(h_{31}\left(\rho ,\tau \right)\alpha^1_1 +h_{32}\left(\rho ,\tau \right)\alpha^1_2 +h_{33}\left(\rho ,\tau \right)\alpha^1_3+g_3(\rho,\tau)\Big)d\tau+\alpha^1_3\sum_{i=0}^mh^3_i(\rho)dB_i, 
\end{equation}
\[~0\le \rho \le 1, \ 0<\tau \le T. \]
So, applying  Lemma \ref{Lemma2} for \eqref{iton1}--\eqref{iton2} 
and using Assumption~\ref{assumpt}, we can conclude that for almost every $\omega\in \mathbf{\Omega}$ the problem \eqref{ito12}--\eqref{ito13} has a continuous solution.  Now, we want to show that the problem \eqref{ito12}--\eqref{ito13} has a unique solution. Let $(\alpha_1,\beta_1,\gamma_1)$ and  $(\alpha_2,\beta_2,\gamma_2)$ be solutions of the problem \eqref{ito12}--\eqref{ito13}. Then $(u_1,u_2,u_3)=(\alpha_1-\alpha_2,\beta_1-\beta_2,\gamma_1-\gamma_2)$ is solution of
\begin{equation*}
{du_1 }+z\left(\rho ,\tau \right)\dfrac{\partial u_1 }{\partial\rho }d\tau=\Big(h_{11}\left(\rho ,\tau \right)u_1 +h_{12}\left(\rho ,\tau \right)u_2 +h_{13}\left(\rho ,\tau \right)u_3\Big)d\tau+u_1 dW_1, 
\end{equation*}
\begin{equation*}
{du_2 }+z\left(\rho ,\tau \right)\dfrac{\partial u_2 }{\partial\rho }d\tau=\Big(h_{21}\left(\rho ,\tau \right)u_1 +h_{22}\left(\rho ,\tau \right)u_2 +h_{23}\left(\rho ,\tau \right)u_3\Big)d\tau+u_2 dW_2,
\end{equation*} 
\begin{equation*}
{du_3 }+z\left(\rho ,\tau \right)\dfrac{\partial u_3 }{\partial\rho }d\tau=\Big(h_{31}\left(\rho ,\tau \right)u_1 +h_{32}\left(\rho ,\tau \right)u_2 +h_{33}\left(\rho ,\tau \right)u_3\Big)d\tau+u_3 dW_3,
\end{equation*} 
\[~0\le \rho \le 1, \ 0<\tau \le T, \]
\[u_1 \left(\rho ,0\right)=0,\ u_2 \left(\rho ,0\right)=0,\ u_3 \left(\rho ,0\right)=0,~\ 0\le \rho \le 1. \] 
Employing the Ito product rule and general Ito formula, it can be shown that $(\mathfrak{u}_1,\mathfrak{u}_2,\mathfrak{u}_3)=$ \newline$(e^{-\sum_{i=0}^mh^1_i(\rho)B_i}u_1,e^{-\sum_{i=0}^mh^2_i(\rho)B_i}u_2,e^{-\sum_{i=0}^mh^3_i(\rho)B_i}u_3)$ is the solution of
\[{d\mathfrak{u}_1 }+z\left(\rho ,\tau \right)\dfrac{\partial\mathfrak{u}_1 }{\partial\rho }d\tau=(h_{11}\left(\rho ,\tau \right)-\dfrac{1}{2}\sum_{i=0}^m(h^1_i(\rho))^2-z\left(\rho ,\tau \right)\sum_{i=0}^m\dfrac{\partial h^1_i}{\partial \rho}(\rho)B_i)\mathfrak{u}_1d\tau\]
\begin{equation}
\label{ito15}
+\Big( e^{\sum_{i=0}^mh^2_i(\rho)B_i-h^1_i(\rho)B_i}h_{12}\left(\rho ,\tau \right)\mathfrak{u}_2 +e^{\sum_{i=0}^mh^3_i(\rho)B_i-h^1_i(\rho)B_i}h_{13}\left(\rho ,\tau \right)\mathfrak{u}_3\Big)d\tau,
\end{equation}
\[{d\mathfrak{u}_2 }+z\left(\rho ,\tau \right)\dfrac{\partial\mathfrak{u}_2 }{\partial\rho }d\tau=(h_{22}\left(\rho ,\tau \right)-\dfrac{1}{2}\sum_{i=0}^m(h^2_i(\rho))^2-z\left(\rho ,\tau \right)\sum_{i=0}^m\dfrac{\partial h^2_i}{\partial \rho}(\rho)B_i)\mathfrak{u}_2d\tau\]
\begin{equation}
+\Big( e^{\sum_{i=0}^mh^1_i(\rho)B_i-h^2_i(\rho)B_i}h_{21}\left(\rho ,\tau \right)\mathfrak{u}_1 +e^{\sum_{i=0}^mh^3_i(\rho)B_i-h^2_i(\rho)B_i}h_{23}\left(\rho ,\tau \right)\mathfrak{u}_3\Big)d\tau,
\end{equation}
\[{d\mathfrak{u}_3 }+z\left(\rho ,\tau \right)\dfrac{\partial\mathfrak{u}_3 }{\partial\rho }d\tau=(h_{33}\left(\rho ,\tau \right)-\dfrac{1}{2}\sum_{i=0}^m(h^3_i(\rho))^2-z\left(\rho ,\tau \right)\sum_{i=0}^m\dfrac{\partial h^3_i}{\partial \rho}(\rho)B_i)\mathfrak{u}_3d\tau\]
\begin{equation}\label{ito14}
+\Big( e^{\sum_{i=0}^mh^1_i(\rho)B_i-h^3_i(\rho)B_i}h_{31}\left(\rho ,\tau \right)\mathfrak{u}_1 +e^{\sum_{i=0}^mh^2_i(\rho)B_i-h^3_i(\rho)B_i}h_{32}\left(\rho ,\tau \right)\mathfrak{u}_2\Big)d\tau,
\end{equation}
\[~0\le \rho \le 1, \ 0<\tau \le T, \]
\[\mathfrak{u}_1 \left(\rho ,0\right)= 0,\ \mathfrak{u}_2 \left(\rho ,0\right)=0,\ \mathfrak{u}_3 \left(\rho ,0\right)=0,~\ 0\le \rho \le 1. \]
From Lemma \ref{Lemma2}, one can deduce that the problem \eqref{ito15}--\eqref{ito14}, for almost every $\omega\in \mathbf{\Omega}$, has the unique solution  $(\mathfrak{u}_1,\mathfrak{u}_2,\mathfrak{u}_3)=(0,0,0)$. Hence, we can conclude that for almost every $\omega\in \mathbf{\Omega}$, the problem \eqref{ito12}--\eqref{ito13} has a unique solution and the inequalities \eqref{khfgt}--\eqref{khfgt1} can be obtained from \eqref{iton7}--\eqref{iton8}.  If we also assume that $h_{ij}(\rho ,\tau )\ (i, j=1, 2, 3)$ and $g_{i}(\rho ,\tau)\ (i= 1, 2, 3)$ are continuously differentiable with respect to $\rho$ and ${\alpha }_0,\ {\beta }_0,\ {\gamma }_0\in \ C^1[0, 1]$, then from \eqref{iton1}--\eqref{iton2}, \eqref{rrrr12}--\eqref{rrrr122} and Lemma \ref{Lemma2}, one can conclude that the solution of the problem \eqref{ito12}--\eqref{ito13} is continuously differentiable with respect to $\rho$, for almost every $\omega\in \mathbf{\Omega}$, such that the inequality \eqref{nemi} can be obtained. 
\end{proof}

\begin{proof}[Proof of Theorem \ref{ncon}] 
Let $\left(c^*,w^*,p^*,q^*,a^*,\eta^*\right)$ and $\left(c^{\epsilon},w^{\epsilon},p^{\epsilon},q^{\epsilon},a^{\epsilon},\eta^{\epsilon}\right)$ be the solutions of  (\ref{YAS})--(\ref{HM345}) corresponding to the controls  $(u_1^*,u_2^*,{\overline{c}}^*,{\overline{w}}^*)\in  \mathcal{C}_{ad}\times\mathcal{W}_{ad}\times\mathcal{C}^1_{ad}\times\mathcal{W}^1_{ad}$ and $(u_1^{\epsilon},u_2^{\epsilon},{\overline{c}}^\epsilon,{\overline{w}}^\epsilon) =(u_1^*+\epsilon u_1^0,u_2^*+\epsilon u_2^0,{\overline{c}}^*+\epsilon {\overline{c}}_0,{\overline{w}}^*+\epsilon {\overline{w}}_0) \in  \mathcal{C}_{ad}\times\mathcal{W}_{ad}\times\mathcal{C}^1_{ad}\times\mathcal{W}^1_{ad}$, respectively, where $\mathcal{C}_{ad}$,  $\mathcal{W}_{ad}$, $\mathcal{C}^1_{ad}$ and  $\mathcal{W}^1_{ad}$ are defined in (\ref{YAK*15555})--(\ref{YAK*255555}) and $\epsilon$  is a positive constant. Also assume that  
\[c^{\epsilon}_1=\dfrac{c^{\epsilon}-c^*}{\epsilon},~w^{\epsilon}_1=\dfrac{w^{\epsilon}-w^*}{\epsilon},
 ~p^{\epsilon}_1=\dfrac{p^{\epsilon}-p^*}{\epsilon},~q^{\epsilon}_1=\dfrac{q^{\epsilon}-q^*}{\epsilon},~a^{\epsilon}_1=\dfrac{a^{\epsilon}-a^*}{\epsilon},~ \eta^{\epsilon}_1=\dfrac{\eta^{\epsilon}-\eta^*}{\epsilon},\]
and $J(u_1^*,u_2^*,{\overline{c}}^*,{\overline{w}}^*)=\min \Big\{J(u_1,u_2,\overline{c},\overline{w}):(u_1,u_2,\overline{c},\overline{w})\in \mathcal{C}_{ad}\times\mathcal{W}_{ad}\times\mathcal{C}^1_{ad}\times\mathcal{W}^1_{ad}\Big\}.$
 Therefore, we have
\[\dfrac{J(u_1^{\epsilon},u_2^{\epsilon},{\overline{c}}^\epsilon,{\overline{w}}^\epsilon)-J(u_1^*,u_2^*,{\overline{c}}^*,{\overline{w}}^*)}{\epsilon}\]
\[=\Big(\int_0^1\int_0^T\rho^2\Big(\lambda_1(p^*+p^{\epsilon})p^{\epsilon}_1+\lambda_2(q^*+q^{\epsilon})q^{\epsilon}_1+(u_1^*+u_1^{\epsilon}-2r_1^*)u_1^0+(u_2^*+u_2^{\epsilon}-2r_2^*)u_2^0\Big)d\rho d\tau\]
\begin{equation}\label{AA*222}
+\int_0^T({\overline{c}}^*+{\overline{c}}^\epsilon-2r_3^*){\overline{c}}_0+({\overline{w}}^*+{\overline{w}}^\epsilon-2r_4^*){\overline{w}}_0d\tau\Big)\geq 0~~a.s.,
\end{equation}
where $ c^{\epsilon}_1=\dfrac{c^{\epsilon}-c^*}{\epsilon}$ is the solution of the following parabolic equation obtained from \eqref{YAS}--\eqref{opb1}:
\[{d c^{\epsilon}_1}-D_{1}\dfrac{1}{\rho^2}\dfrac{\partial }{\partial\rho }\left({\rho }^{2}\dfrac{\partial c^{\epsilon}_1}{\partial\rho }\right)d\tau\]
\begin{equation*}
=u^{\epsilon}\left(1, \tau \right)\rho \dfrac{\partial c^{\epsilon}_1}{\partial\rho }d\tau+u^{\epsilon,1}\left(1, \tau \right)\rho \dfrac{\partial c^*}{\partial\rho }d\tau-
\dfrac{({\eta^{\epsilon}})^2f\left(c^{\epsilon},p^{\epsilon},q^{\epsilon}\right)-({\eta^*})^2f\left(c^*,p^*,q^*\right)}{\epsilon}d\tau+u^0_{1}d\tau+c_1^\epsilon\displaystyle\sum_{i=0}^mh_i^1dB_i,
\end{equation*}
\[ {\rm \ 0}<\rho <1,\ \tau {\rm >}0,\]
\begin{equation*}
\dfrac{\partial c^{\epsilon}_1}{\partial\rho }\left(0,\tau \right){\rm =0, }~c^{\epsilon}_1\left({\rm 1,}\tau \right){\rm =}{\overline{c}}_0(\tau), ~~\ \tau {\rm >}0,
\end{equation*} 
\begin{equation*}
c^{\epsilon}_1\left(\rho ,0\right){\rm =}0,~{\rm  \ 0}\le \rho \le {\rm 1};
\end{equation*}
 and $w^{\epsilon}_1=\dfrac{w^{\epsilon}-w^*}{\epsilon}$ is the solution of the following parabolic equation  obtained from \eqref{YAS676767}--\eqref{YAS1}:
\[d w^{\epsilon}_1-D_{2}\dfrac{1}{{\rho }^{2}}\dfrac{\partial }{\partial\rho }\left({\rho }^{2}\dfrac{\partial w^{\epsilon}_1}{\partial\rho }\right)d\tau\]
\begin{equation*}
=u^{\epsilon}\left(1, \tau \right)\rho \dfrac{\partial w^{\epsilon}_1}{\partial\rho }d\tau+u^{\epsilon,1}\left(1, \tau \right)\rho \dfrac{\partial w^*}{\partial\rho }d\tau-
\dfrac{({\eta^{\epsilon}})^2g\left(w^{\epsilon},p^{\epsilon},q^{\epsilon}\right)-({\eta^*})^2g\left(w^*,p^*,q^*\right)}{\epsilon}d\tau+u^0_{2}d\tau+w_1^\epsilon\displaystyle\sum_{i=0}^mh_i^2dB_i, 
\end{equation*}
\[{\rm \ 0}<\rho <1,\ \tau {\rm >}0,\]
\begin{equation*}
\dfrac{\partial w^{\epsilon}_1}{\partial\rho }\left(0,\tau \right){\rm =0, }~w^{\epsilon}_1\left({\rm 1,}\tau \right)={\overline{w}}_0(\tau), ~~\ \tau {\rm >}0,
\end{equation*}
\begin{equation*}
w^{\epsilon}_1\left(\rho ,0\right){\rm =}0,~{\rm \ \ 0}\le \rho \le {\rm 1};
\end{equation*}
and $p^{\epsilon}_1=\dfrac{p^{\epsilon}-p^*}{\epsilon}$, $q^{\epsilon}_1=\dfrac{q^{\epsilon}-q^*}{\epsilon}$ and $a^{\epsilon}_1=\dfrac{a^{\epsilon}-a^*}{\epsilon}$ are the solutions of the following hyperbolic equations  obtained from \eqref{pqd}--\eqref{YAS2}:
\begin{eqnarray*}
&{d p^{\epsilon}_1}{\rm +}v^{\epsilon}\dfrac{\partial p^{\epsilon}_1}{\partial\rho }d\tau+{\rm }v^{\epsilon}_1\dfrac{\partial p^*}{\partial\rho }d\tau{\rm =}
\dfrac{ G_1(\eta^{\epsilon},c^{\epsilon},w^{\epsilon},p^{\epsilon},q^{\epsilon},a^{\epsilon})-G_1(\eta^*,c^*,w^*,p^*,q^*,a^*)}{\epsilon}d\tau+p_1^\epsilon\displaystyle\sum_{i=0}^mh_i^3dB_i,&\\  
&{d q^{\epsilon}_1}{\rm +}v^{\epsilon}\dfrac{\partial q^{\epsilon}_1}{\partial\rho }d\tau+{\rm }v^{{\epsilon}}_1\dfrac{\partial q^*}{\partial\rho }d\tau{\rm =}
\dfrac{ G_2(\eta^{\epsilon},c^{\epsilon},w^{\epsilon},p^{\epsilon},q^{\epsilon},a^{\epsilon})-G_2(\eta^*,c^*,w^*,p^*,q^*,a^*)}{\epsilon}d\tau+q_1^\epsilon\displaystyle\sum_{i=0}^mh_i^4dB_i,~&\\  
&{d a^{\epsilon}_1}{\rm +}v^{\epsilon}\dfrac{\partial a^{\epsilon}_1}{\partial\rho }d\tau+{\rm }v^{\epsilon}_1\dfrac{\partial a^*}{\partial\rho }d\tau{\rm =}
\dfrac{ G_3(\eta^{\epsilon},c^{\epsilon},w^{\epsilon},p^{\epsilon},q^{\epsilon},a^{\epsilon})-G_3(\eta^*,c^*,w^*,p^*,q^*,a^*)}{\epsilon}d\tau+a_1^\epsilon\displaystyle\sum_{i=0}^mh_i^5dB_i,&  
\end{eqnarray*}
\[{\rm 0}\le \rho \le {\rm 1,\ }\tau {\rm >}0,\]
\begin{equation*}
~~~~~~~~~~~~~~~~~~~p^\epsilon_1\left(\rho,0\right)=0,~~~q^\epsilon_1\left(\rho,0\right)=0,~~~ a^\epsilon_1\left(\rho,0\right)=0,~~~~{\rm 0}\le \rho \le {\rm 1\ };
\end{equation*}
while the following equations are  obtained from \eqref{HM345}:
\begin{eqnarray*}  
&\dfrac{{\rm 1}}{{\rho }^{{\rm 2}}}\dfrac{\partial }{\partial\rho }\left({\rho }^{{\rm 2}}u^{\epsilon,1}\right){\rm =}\dfrac{({\eta^{\epsilon}})^2 h\left(c^{\epsilon},w^{\epsilon},p^{\epsilon},q^{\epsilon},a^{\epsilon}\right)-({\eta^*})^2 h\left(c^*,w^*,p^*,q^*,a^*\right)}{\epsilon},~~~{\rm  0}< \rho \le {\rm 1,\ }\tau {\rm >}0,&\nonumber\\
&\dfrac{{\rm 1}}{{\rho }^{{\rm 2}}}\dfrac{\partial }{\partial\rho }\left({\rho }^{{\rm 2}}u^{\epsilon}\right){\rm =}({\eta^{\epsilon}})^2h\left(c^{\epsilon},w^{\epsilon},p^{\epsilon},q^{\epsilon},a^{\epsilon}\right),~~~{\rm  0}< \rho \le {\rm 1,\ }\tau {\rm >}0,&\nonumber\\
&{d\eta^{\epsilon}_1 (\tau)}=\eta^{\epsilon}_1(\tau)u^\epsilon( 1,\tau )d\tau+\eta^* (\tau)u^{\epsilon,1}( 1,\tau)d\tau+\eta_1^\epsilon dB_*,~~~\tau>0,&\\
& \eta^{\epsilon}_1(0)=0,~~u^{\epsilon,1}\left(0,\tau \right){\rm =0,\ \ }~~u^{\epsilon}\left(0,\tau \right){\rm =0,\ \ }~\tau>0,&\nonumber\\
&v^{\epsilon}_1\left(\rho ,\tau \right){\rm =}u^{\epsilon,1}\left(\rho ,\tau \right){\rm -}\rho u^{\epsilon,1}\left({\rm 1,}\tau \right),~~v^{\epsilon}\left(\rho ,\tau \right){\rm =}u^{\epsilon}\left(\rho ,\tau \right){\rm -}\rho u^{\epsilon}\left({\rm 1,}\tau \right),~~{\rm  0}\le \rho \le {\rm 1,\ }\tau {\rm >}0.&\nonumber 
\end{eqnarray*}
Therefore, using (\ref{YAS*1e1})--(\ref{HM345**ee}),  \textbf{A}, Theorem \ref{??}  and Lemmas \ref{Lemma 1ito} and \ref{Lemma2ito}, we deduce that
\[\int_0^T\int_0^1\rho^2 \Big(c^\epsilon_1{d z_c}+z_c{d c^\epsilon_1}+w^\epsilon_1{d z_w}+z_w{d w^\epsilon_1}+p^\epsilon_1{d z_p}+z_p{d p^\epsilon_1}+q^\epsilon_1{d z_q}+z_q{d q^\epsilon_1} +a^\epsilon_1{d z_a}+z_a{d a^\epsilon_1}\Big) d\rho \]
\[+\int_0^T\int_0^1\rho^2 \Big(dc^\epsilon_1{d z_c}+dw^\epsilon_1{d z_w}+dp^\epsilon_1{d z_p}+dq^\epsilon_1{d z_q} +da^\epsilon_1{d z_a}\Big) d\rho \]
\[+\int_0^T(\eta^\epsilon_1{d z_\eta}+z_\eta{d \eta^\epsilon_1}+d\eta^\epsilon_1{d z_\eta}) \]
\begin{equation}\label{correneed1}
=-\int_0^T\int_0^1\rho^2\Big(\lambda_1p^*p^\epsilon_1+\lambda_2q^*q^\epsilon_1-z_cu_1^0- z_wu_2^0\Big)d\rho d\tau+\varpi(\epsilon,\omega,\rho, T)-\int_0^T(D_1\dfrac{\partial z_c}{\partial\rho}(1,\tau)\overline{c}_0+D_2\dfrac{\partial z_w}{\partial\rho}(1,\tau)\overline{w}_0)d\tau,
\end{equation}
where, for almost every $\omega\in \mathbf{\Omega}$, $\displaystyle{\lim_{\epsilon\rightarrow 0}}\varpi(\epsilon,\omega,\rho, T)=0$. Thus we have
\[\int_0^T\int_0^1\rho^2\Big(\lambda_1p^*p^\epsilon_1+\lambda_2q^*q^\epsilon_1-z_cu_1^0- z_wu_2^0\Big)d\rho d\tau-\varpi(\epsilon,\omega,\rho, T)+\int_0^T(D_1\dfrac{\partial z_c}{\partial\rho}(1,\tau)\overline{c}_0+D_2\dfrac{\partial z_w}{\partial\rho}(1,\tau)\overline{w}_0)d\tau\]
\begin{equation}
\label{AA*e}
=-\int_0^T\int_0^1\rho^2 d\left({ z_cc^\epsilon_1}+ z_ww^\epsilon_1+ z_pp^\epsilon_1+ z_qq^\epsilon_1+\ z_aa^\epsilon_1\right)d\rho -\int_0^Td( z_\eta\eta^\epsilon_1) =0.
\end{equation}
So, using (\ref{AA*222}) and  (\ref{AA*e}), for almost every $\omega\in \mathbf{\Omega}$, we arrive at
\[\int_0^1\int_0^T\rho^2\Big(( z_c+u_1^*-r_1^*)u_1^0+( z_w+u_2^*-r_2^*)u_2^0\Big)d\rho d\tau+\int_0^T\left(\overline{c}^*-r_3^*- D_1\dfrac{\partial z_c}{\partial\rho}(1,\tau)\right){\overline{c}}_0(\tau)d\tau\]
\begin{equation}\label{correneed3}
+\int_0^T\left(\overline{w}^*-r_4^*- D_2\dfrac{\partial z_w}{\partial\rho}(1,\tau)\right){\overline{w}}_0(\tau)d\tau \geq 0.
\end{equation}
Therefore, using tangent-normal cone techniques (see Subsection 5.3 in \cite{25.}) and  (\ref{YAK*15555})--(\ref{YAK*255555}), for almost every $\omega\in \mathbf{\Omega}$, $(u_1^*,u_2^*,{\overline{c}}^*,{\overline{w}}^*)$  is as follows:
\[
u_1^*=\mathcal{F}_1(-z_c+r_1^*),~u_2^*=\mathcal{F}_2(-z_w+r_2^*),~
{\overline{c}}^*=\mathcal{F}_3\left(D_1\dfrac{\partial z_c}{\partial\rho}(1,\tau)+r_3^*\right),~{\overline{w}}^*=\mathcal{F}_4\left(D_2\dfrac{\partial z_w}{\partial\rho}(1,\tau)+r_4^*\right).
\]
The proof is complete.
\end{proof}

\begin{proof}[Proof of Theorem \ref{YKHFKH}]
We prove this theorem for almost every $\omega\in\mathbf{\Omega}$. Using Theorem \ref{??},  we conclude that for each 
\[(u_1,u_2,\overline{c},\overline{w})\in\overline{\mathcal{C}_{ad}\times\mathcal{W}_{ad}\times\mathcal{C}^1_{ad}\times\mathcal{W}^1_{ad}}\setminus\mathcal{C}_{ad}\times\mathcal{W}_{ad}\times\mathcal{C}^1_{ad}\times\mathcal{W}^1_{ad},\]
where $\overline{\mathcal{C}_{ad}\times\mathcal{W}_{ad}\times\mathcal{C}^1_{ad}\times\mathcal{W}^1_{ad}}$ is the completion of ${\mathcal{C}_{ad}\times\mathcal{W}_{ad}\times\mathcal{C}^1_{ad}\times\mathcal{W}^1_{ad}}$ with respect to the following norm
\[{L^4_2([0, 1]\times [0, T])}:=L^2([0, 1]\times [0, T])\times L^2([0, 1]\times [0, T])\times L^2([0, T])\times L^2 ([0, T]),\]
 there exists a unique value ${J^*}(u_1,u_2,\overline{c},\overline{w})$ such that for every sequence $\Big\{(u^n_1,u^n_2,\overline{c}^n,\overline{w}^n)\Big\}_{n=0}^{\infty}$ in $\mathcal{C}_{ad}\times\mathcal{W}_{ad}\times\mathcal{C}^1_{ad}\times\mathcal{W}^1_{ad}$, which converges to $(u_1,u_2,\overline{c},\overline{w})$ in ${L^4_2([0, 1]\times [0, T])}$,  ${J}(u^n_1,u^n_2,\overline{c}^n,\overline{w}^n)$ converges to ${J^*}(u_1,u_2,\overline{c},\overline{w})$. Using the Lebesgue Dominated Convergence Theorem, we deduce that the function
\begin{equation*}
\mathfrak{J}(u_1,u_2,\overline{c},\overline{w})=
\begin{cases}
{J}(u_1,u_2,\overline{c},\overline{w}),~~~~(u_1,u_2,\overline{c},\overline{w})\in{\mathcal{C}_{ad}\times\mathcal{W}_{ad}\times\mathcal{C}^1_{ad}\times\mathcal{W}^1_{ad}},\\
{J^*}(u_1,u_2,\overline{c},\overline{w}),~~~~(u_1,u_2,\overline{c},\overline{w})\in\overline{\mathcal{C}_{ad}\times\mathcal{W}_{ad}\times\mathcal{C}^1_{ad}\times\mathcal{W}^1_{ad}}\setminus {\mathcal{C}_{ad}\times\mathcal{W}_{ad}\times\mathcal{C}^1_{ad}\times\mathcal{W}^1_{ad}},\\
+\infty,~~~~~~~~~~~~~~~~~(u_1,u_2,\overline{c},\overline{w})\notin\overline{\mathcal{C}_{ad}\times\mathcal{W}_{ad}\times\mathcal{C}^1_{ad}\times\mathcal{W}^1_{ad}},\\
\end{cases}
\end{equation*}
is lower semicontinuous with respect to $(u_1,u_2,{\overline{c}},{\overline{w}})$ in ${L_1^{4,2}([0, 1]\times [0, T])}$, where
\[{L^{4,2}_1([0, 1]\times [0, T])}:=\{(f_1,f_2,f_3,f_3): (\rho^2f_1,\rho^2f_2,f_3,f_3)\in L^{4}_1([0, 1]\times [0, T])\},\]
and 
\[{L^4_1([0, 1]\times [0, T])}:=L^1([0, 1]\times [0, T])\times L^1([0, 1]\times [0, T])\times L^1([0, T])\times L^1 ([0, T]).\]
 Using Theorem \ref{v principle},   we  conclude that for each positive $\epsilon$ there exists $u_\epsilon$ such that
\[\mathfrak{J}(\smallunderbrace{u_1^\epsilon,u^\epsilon_2,{\overline{c}}^\epsilon,{\overline{w}^\epsilon}}_{u_\epsilon})<\mathfrak{J}(u_1,u_2,{\overline{c}},{\overline{w}})\]
\begin{equation*}
+\epsilon^{\frac{1}{2}}\Big(\|\rho^2(u_1-u_1^\epsilon)\|_{L^1([0, 1]\times [0, T])}+\|\rho^2(u_2-u_2^\epsilon)\|_{L^1([0, 1]\times [0, T])}+\|\overline{c}-\overline{c}^\epsilon\|_{L^1 ([0, T])}+\|\overline{w}-\overline{w}^\epsilon\|_{L^1 ([0, T])}\Big),
\end{equation*}
\[\forall(u_1,u_2,\overline{c},\overline{w})\not=(u_1^\epsilon,u^\epsilon_2,\overline{c}^\epsilon,\overline{w}^\epsilon).\]
Also  each sequence $\Big\{(u_1^n,u^n_2,\overline{c}^n,\overline{w}^n)\Big\}_{n=0}^{\infty}$ in $\mathcal{C}_{ad}\times\mathcal{W}_{ad}\times\mathcal{C}^1_{ad}\times\mathcal{W}^1_{ad}$,
 which converges to $(u^\epsilon_1,u^\epsilon_2,\overline{c}^\epsilon,\overline{w}^\epsilon)$  in $L_2^4([0, 1]\times [0, T])$, is a Cauchy sequence in $L_2^4([0, 1]\times [0, T])$. Also assume that for each $n\in\mathbb{N}_0$, $(c_{n},w_{n},p_{n},q_{n},a_{n},\eta_{n})$ be the solution of   (\ref{YAS})--(\ref{HM345}) corresponding to $(u_1^{n},u^{n}_2,\overline{c}^{n},\overline{w}^{n})$. Therefore, according to Theorem \ref{??}, the sequence $\Big\{(c_{n},w_{n},p_{n},q_{n},a_{n},\eta_{n})\Big\}_{n=0}^{\infty}$ is a Cauchy sequence in
  \[ L^2(Q^1_T)\times L^2(Q^1_T)\times L^2(Q^*_T)\times L^2(Q^*_T) \times L^2(Q^*_T)\times L^2([0, T]).\] 
  So, using the Lebesgue Dominated Convergence Theorem, we derive that  for each $p>5$ there exist subsequences $\Big\{(u_1^{n_k},u^{n_k}_2,\overline{c}^{n_k},\overline{w}^{n_k})\Big\}_{k=0}^{\infty}$ and $\Big\{(c_{n_k},w_{n_k},p_{n_k},q_{n_k},a_{n_k},\eta_{n_k})\Big\}_{k=0}^{\infty}$, which are Cauchy in 
\[L_p^4([0, 1]\times [0, T]):= L^p([0, 1]\times [0, T])\times L^p([0, 1]\times [0, T])\times L^p([0, T])\times L^p ([0, T]),\]
and
   \[ L^p(Q^1_T)\times L^p(Q^1_T)\times L^p(Q^*_T)\times L^p(Q^*_T) \times L^p(Q^*_T)\times L^p([0, T]),\]  
respectively. Thus, using Lemma \ref{Lemma 1ito}, it is easy to see that the sequences 
\[\Big\{(e^{-\sum_{i=0}^mh_i^1B_i}c_{n_k},e^{-\sum_{i=0}^mh_i^2B_i}w_{n_k})\Big\}_{k=0}^{\infty},\]
and
\[\Big\{(e^{\sum_{i=0}^mh_i^1B_i}z_{c_{n_k}},e^{\sum_{i=0}^mh_i^2B_i}z_{w_{n_k}})\Big\}_{k=0}^{\infty},\]
 are Cauchy in 
$W^{2,1,2}_p(Q^1_T):=W^{2,1}_p(Q^1_T)\times W^{2,1}_p(Q^1_T),$
where for each $k\in\mathbb{N}_0$ \[(z_{c_{n_k}},z_{w_{n_k}},z_{p_{n_k}},z_{q_{n_k}},z_{a_{n_k}},z_{\eta_{n_k}}),\] is the solution of  (\ref{YAS*1e1})--(\ref{HM345**ee})  corresponding to  $(u_1^{n_k},u^{n_k}_2,\overline{c}^{n_k},\overline{w}^{n_k})$.
Therefore, there exist sequences
\[\Big\{\smallunderbrace{(c_{k_l},w_{k_l},p_{k_l},q_{k_l},a_{k_l},\eta_{k_l},z_{c_{k_l}},z_{w_{k_l}},z_{p_{k_l}},z_{q_{k_l}},z_{a_{k_l}},z_{\eta_{k_l}})}_{A_l}\Big\}_{l=0}^{\infty},~~\Big\{\smallunderbrace{(u_1^{k_l},u^{k_l}_2,\overline{c}^{k_l},\overline{w}^{k_l})}_{\mathfrak{Z}_l}\Big\}_{l=0}^{\infty},\]
and
$A=(c^\epsilon,w^\epsilon,p^\epsilon,q^\epsilon,a^\epsilon,\eta^\epsilon,z^\epsilon_{c},z^\epsilon_{w},z^\epsilon_{p},z^\epsilon_{q},z^\epsilon_{a},z^\epsilon_{\eta})$
 such that
\[\lim_{l\rightarrow\infty}A^*_l=A^*,\]
\[~in~~\smallunderbrace{W^{2,1,2}_p(Q^1_T)\times L^p(Q^*_T)\times L^p(Q^*_T) \times L^p(Q^*_T)\times L^p([0, T])}_{\mathbf{W}_p}\times\mathbf{W}_p,\]
\[~\lim_{l\rightarrow\infty}\mathfrak{Z}_l=(u^\epsilon_1,u^\epsilon_2,\overline{c}^\epsilon,\overline{w}^\epsilon),~in~L^4_p([0, 1]\times [0, T]),\] 
where 
\[A^*=(e^{-\sum_{i=0}^mh_i^1B_i}c^\epsilon,e^{-\sum_{i=0}^mh_i^2B_i}w^\epsilon,p^\epsilon,q^\epsilon,a^\epsilon,\eta^\epsilon,e^{\sum_{i=0}^mh_i^1B_i}z^\epsilon_{c},e^{\sum_{i=0}^mh_i^2B_i}z^\epsilon_{w},z^\epsilon_{p},z^\epsilon_{q},z^\epsilon_{a},z^\epsilon_{\eta}),\]
and
\[A_l^*=(e^{-\sum_{i=0}^mh_i^1B_i}c_{k_l},e^{-\sum_{i=0}^mh_i^2B_i}w_{k_l},p_{k_l},q_{k_l},a_{k_l},\eta_{k_l},e^{\sum_{i=0}^mh_i^1B_i}z_{c_{k_l}},e^{\sum_{i=0}^mh_i^2B_i}z_{w_{k_l}},z_{p_{k_l}},z_{q_{k_l}},z_{a_{k_l}},z_{\eta_{k_l}}).\]
From the $t$-Anisotropic Embedding Theorem (see Theorem 1.4.1 in \cite{21.}), we conclude that  $c^\epsilon, w^\epsilon, z_c^\epsilon,$ and $z_w^\epsilon$ belong to $C({Q^1_T})$. Using the proof of Theorem \ref{ncon} and tangent-normal cone techniques (see Proposition 5.3 in \cite{25.}), for  $\epsilon$ small enough we have
\begin{equation}\label{YA1111}
|u_1^\epsilon-\mathcal{F}_1(-z^\epsilon_c+r_1^*)|\leq \epsilon^{\frac{1}{2}},~|u_2^\epsilon-\mathcal{F}_2(-z^\epsilon_w+r_2^*)|\leq \epsilon^{\frac{1}{2}},~
\end{equation}
\begin{equation}
|{\overline{c}}^\epsilon-\mathcal{F}_3\left(D_1\dfrac{\partial z^\epsilon_c}{\partial\rho}(1,\tau)+r_3^*\right)|\leq\epsilon^{\frac{1}{2}},
\end{equation}
\begin{equation}\label{YA1***}
~|{\overline{w}}^\epsilon-\mathcal{F}_4\left(D_2\dfrac{\partial z^\epsilon_w}{\partial\rho}(1,\tau)+r_4^*\right)|\leq\epsilon^{\frac{1}{2}}.
\end{equation}
 From  Lemmas   \ref{lem} and \ref{Lemma 1ito}  and Theorem \ref{???}, it is clear that  the function 
\[\mathcal{H}:\mathcal{C}_{ad}\times\mathcal{W}_{ad}\times\mathcal{C}^1_{ad}\times\mathcal{W}^1_{ad}\longrightarrow\mathcal{C}_{ad}\times\mathcal{W}_{ad}\times\mathcal{C}^1_{ad}\times\mathcal{W}^1_{ad},\]
\begin{equation*} \mathcal{H}(u_1,u_2,\overline{c},\overline{w})=\Big(\mathcal{F}_1(-z_c+r_1^*),\mathcal{F}_2(-z_w+r_2^*),\mathcal{F}_3(D_1\dfrac{\partial z_c}{\partial\rho}(1,\tau)+r_3^*),\mathcal{F}_4( D_2\dfrac{\partial z_w}{\partial\rho}(1,\tau)+r_4^*)\Big),
\end{equation*}
where $z_c$ and $z_w$ are adjoint states corresponding to $(u_1,u_2,\overline{c},\overline{w})$,  has a unique fixed point $(u_1^*,u_2^*,{\overline{c}}^*,{\overline{w}}^*)$. Also, using  (\ref{YA1111})--(\ref{YA1***}), Lemma \ref{Lemma 1ito}, and Theorems  \ref{??} and \ref{???}, it is easy to show that  the sequence $\{(u_1^\epsilon,u_2^\epsilon,\overline{c}^\epsilon,\overline{w}^\epsilon)\}$ has a subsequence which converges to $(u_1^*,u_2^*,\overline{c}^*,\overline{w}^*)$ in $  L^2(Q^1_T)\times L^2(Q^1_T) \times L^2([0,T])\times L^2([0, T])$, as $\epsilon$ converges to zero. Therefore, we have
\[{J}(u_1^*,u_2^*,{\overline{c}}^*,{\overline{w}}^*)\leq{J}(u_1,u_2,{\overline{c}},{\overline{w}}),~~\forall(u_1,u_2,{\overline{c}},{\overline{w}})\in \mathcal{C}_{ad}\times\mathcal{W}_{ad}\times\mathcal{C}^1_{ad}\times\mathcal{W}^1_{ad}.\]
Moreover, from    \eqref{YA1**e5e}--\eqref{YA2**} and Theorem \ref{AMHZ}, one can deduce that $u_1^*$, $u_2^*$, ${\overline{c}}^*$, ${\overline{w}}^*$ are $\mathcal{F}(\tau)-$adapted.  
\end{proof}
\end{appendix}


\end{document}